\documentclass[amscd,amssymb,verbatim,10pt]{amsart}
\usepackage[fontsize = 10bp]{fontsize}
\usepackage{scalerel}
\usepackage{stackengine,wasysym}
\usepackage{bbm}
\usepackage{enumerate}
\usepackage{mathtools}

\usepackage{amssymb,latexsym, amsmath, amscd, array, graphicx, pb-diagram}
\usepackage{hyperref}
\usepackage{color}

\usepackage{tikz-cd}

\usepackage{color}
\hypersetup{
    colorlinks=true,
    linkcolor=red,
    urlcolor=blue,
    citecolor=blue
           }

\usepackage{amsthm}
\usepackage{hyperref}
\usepackage[all]{xy}
\usepackage[scr]{rsfso}

\numberwithin{equation}{section}

\swapnumbers

\theoremstyle{plain}

\newtheorem{thm}{Theorem}[section]

\newtheorem{lemma}[thm]{Lemma}

\newtheorem{proposition}[thm]{Proposition}
\newtheorem{corollary}[thm]{Corollary}

\theoremstyle{definition}

\newtheorem{definition}[thm]{Definition}

\newtheorem{remark}[thm]{Remark}

\newtheorem{ex}[thm]{Example}

\DeclareMathOperator{\cat}{{\mathsf{cat}}}
\DeclareMathOperator{\TC}{{\mathsf{TC}}}
\DeclareMathOperator{\zcl}{{\mathsf{zcl}}}
\DeclareMathOperator{\gap}{{\mathsf{gap}}}

\DeclareMathOperator{\proj}{{\textup{proj}}}

\DeclareMathOperator{\Ker}{{\rm Ker}}

\newcommand{\D}{\mathscr{D}}

\def\Im{\protect\operatorname{Im}}

\makeatletter
\def\@tocline#1#2#3#4#5#6#7{\relax
  \ifnum #1>\c@tocdepth 
  \else
    \par \addpenalty\@secpenalty\addvspace{#2}%
    \begingroup \hyphenpenalty\@M
    \@ifempty{#4}{%
      \@tempdima\csname r@tocindent\number#1\endcsname\relax
    }{%
      \@tempdima#4\relax
    }%
    \parindent\z@ \leftskip#3\relax \advance\leftskip\@tempdima\relax
    \rightskip\@pnumwidth plus4em \parfillskip-\@pnumwidth
    #5\leavevmode\hskip-\@tempdima
      \ifcase #1
       \or\or \hskip 1em \or \hskip 2em \else \hskip 3em \fi%
      #6\nobreak\relax
    \hfill\hbox to\@pnumwidth{\@tocpagenum{#7}}\par
    \nobreak
    \endgroup
  \fi}
\makeatother

\def\C{{\mathbb C}}
\def\Z{{\mathbb Z}}
\def\Q{{\mathbb Q}}
\def\R{{\mathbb R}}

\def\1{\hbox{\rm\rlap {1}\hskip.03in{\rom I}}}
\def\Bbbone{{\rm1\mathchoice{\kern-0.25em}{\kern-0.25em}
{\kern-0.2em}{\kern-0.2em}I}}

\usepackage{comment}

\long\def\forget#1\forgotten{} %

\newcommand\ver[1]{\marginpar{\tiny Changed in Ver \VER}}

\date{\today}

\usepackage{adjustbox}

\newcommand{\vgeq}{\mathrel{\rotatebox{90}{$\geq$}}}

\begin{document}

\begin{abstract}
For positive integers $k$, $n$, and $g$ with $k\geq2$, we give a closed-form expression for the $k$-th $\mathbb{Z}_2$-zero-divisor cup length $\zcl_k(SP^n(N_g))$ of the $n$-th symmetric product $SP^n(N_g)$ of the closed non-orientable surface $N_g$ of genus $g$. This allows us to estimate, and in some cases, completely determine, the $k$-th sequential topological complexity $\TC_k(SP^n(N_g))$, as well as the Lusternik--Schnirelmann category of the homotopy cofiber of the $k$-th diagonal map $SP^n(N_g) \to (SP^n(N_g))^k$. Our results recover previously known facts for even-dimensional real projective spaces ($g=1$) and closed non-orientable surfaces ($n=1$). In addition, we show that, as $g$ grows, $\TC_2(SP^n(N_g))$ behaves in a different way as all other invariants $\TC_k(SP^n(N_g))$ do. Likewise, as $k$ grows, we describe an eventual maximal-possible linear growth of $\zcl_k(SP^n(N_g))$, which allows us to prove the rationality conjecture of Farber and Oprea for the TC-generating function of $SP^n(N_g)$.
\end{abstract}

\title[Symmetric products of non-orientable surfaces]{On the sequential topological complexity and the LS-category of the cofiber of higher diagonals for symmetric products of non-orientable surfaces}

\author[J.~Gonz\'alez]{Jes\'us~Gonz\'alez}

\author[E.~Jauhari]{Ekansh~Jauhari}

\address{Jes\'us~Gonz\'alez, Departamento de Matem\'aticas, Centro de Investigaci\' on y de Estudios Avanzados del IPN, Av. Instituto Polit\'ecnico Nacional 2508, San Pedro Zacatenco, Ciudad de M\'exico, 07000.}

\email{jesus.glz-espino@cinvestav.mx and jesus@math.cinvestav.mx}

\address{Ekansh Jauhari, Department of Mathematics, University of Florida, 358 Little Hall, Gainesville, FL 32611, USA.}

\email{ekanshjauhari@ufl.edu}

\subjclass[2020]
{Primary 55S15, 
55M30, 
Secondary 
57N65, 
70B15. 
}  

\keywords{Symmetric product, sequential topological complexity, zero-divisor cup length, LS-category, rationality conjecture, real projective space, cofiber of diagonal maps.}

\maketitle
\tableofcontents

\section{Introduction and main results}\label{sec: intro}
The $k$-th sequential topological complexity $\TC_k(X)$ of a topological space $X$ is a numerical homotopy invariant, central to the field of topological robotics, that measures the difficulty in moving points continuously within the space $X$,~\cite{Far,Ru}. More precisely, $\TC_k(X)$ is one less than the minimal number of \emph{motion planning rules} required to tell an autonomous system (such as a robot) how to move between any $k$ points of $X$. As is evident from the vast literature on topological complexity, computing $\TC_k(X)$ for a general space $X$ is a difficult task; so typically, one studies lower bounds and upper bounds to $\TC_k(X)$ that often come from the cohomology of $X^k$ and the homotopy dimension of $X$, respectively. One of the most useful lower bounds to $\TC_k(X)$ is the $k$-th zero-divisor cup length of $X$, denoted $\zcl_k(X)$, which is the cup length of the kernel of the homomorphism induced in cohomology by the $k$-th diagonal map $X\to X^k$. Indeed, this invariant determines $\TC_k(X)$ for $k\ge 2$ when $X$ is a finite product of spheres~\cite{Far,BGRT}, a simply connected symplectic manifold~\cite{FTY,BGRT}, an orientable surface~\cite{Far,Ru} or its finite symmetric product~\cite{DCDJ,Ja}, or a non-orientable surface for $k\ge 3$~\cite{GGGL}, just to name a few classes of closed manifolds.

The classical \emph{motion planning problem} of continuously moving between pairs of points on a real projective space $P^n$ dates back to the work of~\cite{FTY}, where it was proved that $\TC_2(P^n)$ coincides with the immersion dimension of $P^n$ when $n\ne 1,3,7$, and with $n$ otherwise. More recently, the \emph{sequential} motion planning problem of moving between the coordinates of $k$-tuples of points on $P^n$ was studied in~\cite{CAG+,Dav}, where sharp lower bounds to $\TC_k(P^n)$ were obtained in terms of the $k$-th $\Z_2$-zero-divisor cup length of $P^n$. In this paper, we extend the work on even-dimensional real projective spaces $P^{2n}$ from~\cite{Dav} to the $n$-th symmetric product $SP^n(N_g)$ of a closed non-orientable surface $N_g$ of genus $g\ge 1$ for $n\ge 1$. Namely, we estimate (and in some cases, precisely determine) the value $\TC_k(SP^n(N_g))$ for each $k\ge 2$ by giving closed-form expressions for the $k$-th $\Z_2$-zero-divisor cup length of any $SP^n(N_g)$. This work is an extension because, as noted by Arnold, Maxwell's theorem on multipole representations of spherical harmonics yields a homeomorphism $SP^n(P^2)\cong P^{2n}$, which can also be interpreted as a ``quaternionic'' analogue of the well-known homeomorphism $SP^n(\C P^1)\cong \C P^n$ implied by the Fundamental Theorem of Algebra, see~\cite{BGZ,KS}.

Another useful lower bound to $\TC_k(X)$ is given in terms of the Lusternik--Schnirelmann category of the homotopy cofiber of the $k$-th diagonal map $X\to X^k$. This invariant, which we denote by $\cat(C_{\Delta_kX})$, was studied in detail for $k=2$ in~\cite{GV,Dr1,Dr2,GGL}, for example, where it played a central role in the study of the classical topological complexity of closed non-orientable surfaces. For general $k$, a close relationship between $\TC_k(X)$ and $\cat(C_{\Delta_kX})$ has recently been settled in~\cite[Proposition~2.1~(2)]{CAC}. In this paper, we establish general comparisons between $\zcl_k(X)$, $\cat(C_{\Delta_kX})$, and $\TC_k(X)$, which extend known properties for $k=2$ and will play a central role in our assessment of $\cat(C_{\Delta_kSP^n(N_g)})$ and $\TC_k(SP^n(N_g))$. In particular, $\cat(C_{\Delta_kX})$ is bounded from below by $\zcl_k(X)$, so our computations of the $k$-th $\Z_2$-zero-divisor cup length of $SP^n(N_g)$ in this paper give estimates (and in some cases, the exact value) of the quantity $\cat(C_{\Delta_kSP^n(N_g)})$. This enables us to compare the values $\TC_k(SP^n(N_g))$ and $\cat(C_{\Delta_kSP^n(N_g)})$, and to show that they agree in a number of cases.

We will review some standard and useful facts about the symmetric products of non-orientable surfaces, topological complexity, Lusternik--Schnirelmann category, and zero-divisor cup length in Section~\ref{sec: prelims}. For now, we describe the exact formulae for the $k$-th $\Z_2$-zero-divisor cup length of the $n$-th symmetric product of the closed non-orientable surface of genus $g\ge 1$ for each $k\ge 2$ and $n\ge 1$, and their application towards the computation of $\TC_k(SP^n(N_g)$ and $\cat(C_{\Delta_kSP^n(N_g)})$. This cup length will be denoted by $\zcl_k(SP^n(N_g))$ and studied in detail in Section~\ref{sec: zero-divisor}. For simplicity, we divide the description of $\zcl_k(SP^n(N_g))$ into a few distinct cases.

The simplest case is when $k=2$. To describe $\zcl_2(SP^n(N_g))$, it turns out that one only needs to know the consecutive $2$-powers that optimally bound $n$ from above and below. Namely, for any given integer $n\ge 1$, there exists $e\ge 0$ such that $2^e\le n<2^{e+1}$, and then $\zcl_2(SP^n(N_g))$ can be described entirely in terms of $g$, $n$, and $e$, depending only on how $g$ interacts with the gap between $n$ and $2^e$. The precise statement, which will be proven in Section~\ref{sec: k=2}, is as follows. 

\begin{thm}\label{thm: zcl k=2}
  Let $2^e\le n< 2^{e+1}$ for some $e\ge 0$. Then for $g\ge 1$, we have that 
  \[\zcl_2(SP^n(N_g))=\begin{cases}
      2^{e+2}+g-2 & \text{ if } g\le 2n-2^{e+1}+1;
      \\
      2^{e+1}+2n-1 & \text{ if } g\ge 2n-2^{e+1}+1.
  \end{cases}
  \]
\end{thm}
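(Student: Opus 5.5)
The plan is to compute $\zcl_2(SP^n(N_g))$ by hand from the explicit mod-$2$ cohomology ring of $SP^n(N_g)$ (Macdonald's description, as adapted in Section~\ref{sec: prelims}). This ring is generated by degree-one classes $a_1,\dots,a_g$ coming from $H^1(N_g;\Z_2)$ and a degree-two class coming from the fundamental class. The relations mimic those of $P^{2n}$ when $g=1$: there is a single class $x$ with $x^{2n+1}=0$ and $x^{2n}\neq 0$. In general I would change basis so that one degree-one class, say $x=a_1+\dots+a_g$ (the class detecting the orientation character), plays the role of the generator of $H^1(P^{2n})$. The remaining $g-1$ classes $b_i$ should then satisfy $x b_i=0$, or at least $x b_i$ should be expressible through the degree-two generator. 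The kernel of the cup product $H^*(X)\otimes H^*(X)\to H^*(X)$ is generated as an ideal by the elementary zero-divisors $\bar u=u\otimes1+1\otimes u$ for $u$ running over ring generators, so everything reduces to computing products of the $\bar x$, the $\bar b_i$ and $\bar y$.

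\textbf{Lower bound.} First, exactly as for $P^{2n}$, the element $\bar x^{\,2^{e+2}-1}$ is nonzero. Its expansion $\sum_j\binom{2^{e+2}-1}{j}x^j\otimes x^{2^{e+2}-1-j}$ has all binomial coefficients odd, and since $2^{e+2}-1\le 4n-1$ there is a term with both exponents at most $2n$. Multiplying in the factors $\bar b_i$ increases the length by one each. I would exhibit a monomial in the tensor product basis, of the shape $x^{2n}\otimes x^{\,\cdot}\,b_{i_1}\cdots$, that survives, giving length $2^{e+2}-1+(g-1)=2^{e+2}+g-2$. This is only possible as long as total degree allows, i.e.\ as long as $2^{e+2}+g-2\le 2^{e+1}+2n-1$, which is precisely $g\le 2n-2^{e+1}+1$. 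When $g$ exceeds this threshold, I would use only $2n-2^{e+1}+1$ of the classes $b_i$, or replace some $\bar b_i$ by $\bar y$. That reaches $2^{e+1}+2n-1$, the value at the crossover.

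\textbf{Upper bound.} I would use two constraints. The first is the Frobenius identity $\bar u^{\,2^{e+2}}=u^{2^{e+2}}\otimes1+1\otimes u^{2^{e+2}}$; this vanishes because $2^{e+2}>2n$ exceeds the height of every degree-one class. Hence in any nonzero product each degree-one zero-divisor appears with exponent at most $2^{e+2}-1$, and since the $b_i$ multiply trivially against each other and against $x$ beyond a point, each $\bar b_i$ contributes at most one extra factor. The second is a degree count: any nonzero product lies in $H^{\le 4n}(X\times X)$. Moreover, a product containing $\bar x^{\,m}$ with $m\ge 2^{e+1}$ forces both tensor factors to contain high powers of $x$, which kills the $b_i$'s. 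I would use this to show that the effective degree budget is $2^{e+1}+2n-1$ rather than $4n$. Together these give $\min\{2^{e+2}+g-2,\;2^{e+1}+2n-1\}$, which is the stated formula.

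The main obstacle is this refined upper bound in the second regime. It requires a careful combinatorial analysis of which monomials $x^{i}b_I\otimes x^{j}b_J$ are nonzero in $H^*(SP^n(N_g))$, and of the parity of the binomial coefficients $\binom{m}{i}$ for $2^{e+1}\le m<2^{e+2}$. My first attempt would be to filter by the exponent of $\bar x$ and apply Lucas' theorem: for such $m$, the odd coefficients $\binom{m}{i}$ avoid the middle range. That forces one tensor factor to have $x$-degree at least $m-2^{e+1}+1$, leaving room for at most $2n-(m-2^{e+1}+1)$ further degree-one factors there.
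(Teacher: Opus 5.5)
There is a genuine gap, and it starts with your normalization.

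\textbf{The choice of $x$.} Taking $x=a_1+\cdots+a_g$ does not reproduce $P^{2n}$. Since $a_i^2=y$ for every $i$, you get $x^2=g\,y$, which is $0$ for even $g$. Then $\bar x^{\,2}=0$, and your lower-bound element $\bar x^{\,2^{e+2}-1}$ vanishes.

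The hoped-for splitting also fails. A monomial $a_{i_1}\cdots a_{i_r}y^s$ with distinct indices vanishes iff $r+s>n$; this is Kallel--Salvatore's description, not Macdonald's. So for $n\ge 2$ the products $a_ia_j$ ($i\ne j$) are basis elements independent of $y$. Consequently no degree-one class other than $0$ and $x$ has product with $x$ in the span of $y$. The $b_i$ also do not multiply trivially: $b_i^2=y$ and $b_ib_j\neq 0$.

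The workable normalization is $x=a_1$ and $b_i=a_i$ for $i\ge 2$. The reason each $\bar b_i$ contributes at most one factor is $\bar a_i^{\,2}=\bar a_1^{\,2}$ (because $a_i^2=y=a_1^2$). This lets you push excess powers into $\bar a_1$, as in Proposition~\ref{prop: zcl search reduced}.

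Two further slips:
\begin{itemize}
\item $\bar y=\bar a_1^{\,2}$, so ``replacing some $\bar b_i$ by $\bar y$'' on top of $\bar a_1^{\,2^{e+2}-1}$ gives $\bar a_1^{\,2^{e+2}+1}=0$.
\item In the second regime you must use $2n-2^{e+1}$ of the $b_i$, not $2n-2^{e+1}+1$. The latter gives length $2^{e+1}+2n$, and that product vanishes.
\end{itemize}

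\textbf{The upper bound in the second regime.} This is the more serious gap. You leave the case $g\ge 2n-2^{e+1}+1$ open, and the accounting you propose cannot close it.

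The degree bound $H^{\le 4n}$ only gives $m+s\le 4n$ for $\bar a_1^{\,m}$ times $s$ factors $\bar b_i$. Allotting ``room for $2n-(\cdots)$ further degree-one factors'' prices each $b_i$ like a single power of $a_1$, so it gives nothing better. Lucas' theorem is also misdescribed. For $2^{e+1}\le m<2^{e+2}$ it forces one tensor factor to have $a_1$-degree at least $2^{e+1}$ and the other at most $m-2^{e+1}$, not both large. In any case it is not needed.

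The missing idea is the capacity count: $a_1^{\,a}\prod_{i\in C}a_i\neq 0$ iff $\lceil a/2\rceil+|C|\le n$. So each $a_i$ with $i\ge 2$ costs as much as $a_1^{2}$. Consider a nonzero term $a_1^{\,a}a_C\otimes a_1^{\,b}a_D$ in the expansion of $\bar a_1^{\,m}\prod_{i\in T}\bar a_i$. Applying the count to both tensor factors requires $\lceil a/2\rceil+\lceil b/2\rceil+|T|\le 2n$, hence $m+2|T|\le 4n$. Together with the Frobenius bound $m\le 2^{e+2}-1$, this yields $m+|T|\le 2n+2^{e+1}-1$. This is precisely the paper's parity argument in Case~(2).

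The same capacity condition also governs the threshold in your lower bound. Your ``total degree'' budget presupposes the answer. Instead, write $n=2^e+d$. The capacity count shows that the term $a_1^{\,2^{e+1}-2d-1}\prod_{i=2}^{g}a_i\otimes a_1^{\,2n}$ of $\bar a_1^{\,2^{e+2}-1}\prod_{i=2}^{g}\bar a_i$ is a nonzero basis element exactly when $g\le 2d+1$. Its coefficient is $\binom{2^{e+2}-1}{2n}$, which is odd.
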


Cases with $k\ge 3$ require more machinery. In particular, unlike the case $k=2$, one needs to better understand the binary expansion of $n$ (actually that of $2n$) to describe $\zcl_k(SP^n(N_g))$. To that end, we follow the notations of~\cite{Dav} and define the following.

\begin{definition}\label{defn: notation}
    Suppose the binary expansion of $m\ge 1$ (where $2^e\le m<2^{e+1}$ for some $e\ge 0$) is given by $m=\delta_e2^e+\delta_{e-1}2^{e-1}+\cdots +\delta_12^1+\delta_02^0$, where $\delta_j\in \{0,1\}$. Define the set
    \[
    S(m): = \left\{ i\in\{0,\ldots,e\} \ \middle| \ \delta_i=\delta_{i-1}=1 \text{ and } \delta_{i+1}=0 \right\}.
    \]
    For $0\le i \le e$, define also the number
    \[
    Z_i(m):=\sum_{j=0}^i\left(1-\delta_j\right)2^j.
    \]
\end{definition}

In other words, $Z_i(m)$ is the sum of all $2^j$ with $j\le i$ such that the $j$-th position in the binary expansion of $m$ corresponds to $0$, and $i\in S(m)$ if a sequence of at least two consecutive $1$'s begins at the $i$-th position in the binary expansion of $m$.

To describe $\zcl_k(SP^n(N_g))$ for $k\ge 3$, we will need to take $m=2n$ above. Our description will utilize that of the $k$-th zero-divisor cup length of even-dimensional real projective spaces from the work of Davis, see~\cite{Dav}. Before recalling that, we define the following.

\begin{definition}\label{gapdefinido}
    For a finite-dimensional CW complex $X$ and $k\ge 2$, define the $k$\emph{-th zero-divisors gap of $X$}, denoted $\gap_k(X)$, as follows:
    \[
    \gap_k(X):=k\cdot\dim(X)-\zcl_k(X).
    \]
\end{definition}

In particular,
\begin{equation}
\label{lasuma}
\gap_k(SP^n(N_g))+\zcl_k(SP^n(N_g))=2nk.
\end{equation}
The quantities $\zcl_k(P^{2n})$ and $\gap_k(P^{2n})$ are completely understood. Indeed, it was shown in~\cite{Dav} that
\begin{equation}\label{eq: davis gap main}
\gap_k(P^{2n})=\max_{i\in S(2n)}\left\{0,2^{i+1}-1-kZ_i(2n)\right\}.    
\end{equation}
The description of $\zcl_k(SP^n(N_g))$, and hence that of $\gap_k(SP^n(N_g))$, for $k\ge 3$ will be divided in various parts. The case of odd $k\ge 3$ is simpler. In the case when $k\ge 4$ is even, we give three formulae for $\zcl_k(SP^n(N_g))$ depending on how $g$ compares with the ratios $\tfrac{\gap_k(P^{2n})}{k}$ and $\tfrac{\gap_k(P^{2n})}{k-2}$. The precise statement is as follows.  

\begin{thm}\label{thm: zcl k ge 3}
    Let $k\ge 3$ and $n,g\ge 1$ be integers.
    \begin{enumerate}[(A)]
        \item If $k\ge 3$ is odd, or if $k\ge 4$ is even and $g\le \left\lfloor \tfrac{\gap_k(P^{2n})}{k}\right\rfloor+1$, then we have that
 \[
         \zcl_k(SP^n(N_g))=2nk-\max_{i\in S(2n)}\left\{0,2^{i+1}-1-kZ_i(2n)-(k-1)(g-1)\right\}.
 \]\label{1st}
    \item If $k\ge 4$ is even and $\left\lfloor \tfrac{\gap_k(P^{2n})}{k}\right\rfloor+2\le g\le\left\lfloor\tfrac{\gap_k(P^{2n})}{k-2}\right\rfloor+1$, then we have that
    \[
    \zcl_k(SP^n(N_g))=2nk+\left(\frac{k}{2}-1\right)\left(g-1\right)-\frac{\gap_k(P^{2n})+1}{2}.
    \]\label{2nd}
    \item If $k\ge 4$ is even and $g\ge\left\lfloor\tfrac{\gap_k(P^{2n})}{k-2}\right\rfloor+2$, then we have that 
    \[
    \zcl_k(SP^n(N_g))=2nk.
    \]\label{3rd}
\end{enumerate}
\end{thm}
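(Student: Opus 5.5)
We plan to work with the mod~2 cohomology ring of $SP^n(N_g)$. Write $N_g$ as having $H^*(N_g;\Z_2)$ generated by $a_1,\dots,a_g$ in degree one with $a_ia_j=\delta_{ij}z$, where $z$ is the top class. By Macdonald's description, $H^*(SP^n(N_g);\Z_2)$ is generated by classes $x_1,\dots,x_g$ of degree one and $y$ of degree two. The relevant relations restrict to those of $P^{2n}=SP^n(N_1)$, where $H^*=\Z_2[x]/(x^{2n+1})$ and $y=x^2$. Modulo nilpotent corrections, we may choose generators so that the following hold: $x_1$ is a ``projective'' class with $x_1^{2n+1}=0$ and $x_1^{2n}\neq 0$; the classes $x_i$ with $i\ge2$ satisfy $x_i^2=x_1x_i$-type relations coming from the surface; and the top class is realized in many ways. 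The zero-divisor ideal in $H^*(X^k)$ is generated by the elements $x_i\otimes 1\otimes\cdots-1\otimes\cdots\otimes x_i$ placed in the $j$-th slot, and similarly for $y$.

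For the lower bound, we will take the optimal product of zero-divisors for $P^{2n}$ realized by Davis. It lives in $H^*((P^{2n})^k)$, and we will pull it back along $SP^n(N_1)\to SP^n(N_g)$, or rather lift it using $x_1$. This product has length $2nk-\gap_k(P^{2n})$. Its failure to reach the top class $\prod_j (x_1^{2n})_j$ is concentrated at the indices $i\in S(2n)$. We then multiply by zero-divisors built from the extra generators $x_2,\dots,x_g$. Each such $\bar x_i=\sum_j$ (differences) contributes one to the length, and the relations $x_1x_i\sim x_i^2$ let each additional generator replace $k-1$ missing powers of $x_1$ spread over the $k$ factors. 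This yields the correction $-(k-1)(g-1)$ inside the maximum in (A).

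For even $k$, we expect the sum $\sum_j x_i^{(j)}$ squared to vanish partially, by a parity phenomenon: $\binom{k}{2}$-type coefficients are even for $k$ even. Consequently, once $g$ exceeds $\lfloor\gap_k/k\rfloor+1$, each additional generator recovers only $k/2-1$ per unit and not $k-1$. This produces the linear regime (B) and finally saturation (C), where the top class $\prod_j(\text{top})_j$ is reached, giving the value $2nk$. The thresholds should come out of comparing these two rates against $\gap_k(P^{2n})$.

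For the upper bound, we plan to show that any product of zero-divisors of length greater than the stated value vanishes. Using the $\Sigma_k$-symmetry and a filtration by the number of $x_i$-factors with $i\ge2$, we would reduce to computing coefficients of $\prod_j(\text{top})_j$ in a product of $(x_1\otimes 1-1\otimes x_1)$-type terms times a bounded number of the $\bar x_i$. This reduces to binomial coefficients mod~2, which Lucas' theorem controls via $S(2n)$ and $Z_i(2n)$ exactly as in~\cite{Dav}.

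The main obstacle is this upper bound in the even-$k$ regime. There, cancellation among the $x_i$-contributions must be shown to be unavoidable, i.e.\ one must show that no clever mixed product beats the rate $k/2-1$. We would first try a degree/weight-counting argument that assigns each $x_i$ ($i\ge2$) a weight recording in how many factors it can occur nontrivially, and then prove the mod~2 vanishing of the remaining coefficients by induction on $g$.
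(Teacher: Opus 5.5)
\textbf{Verdict: genuine gap.} The upper bound in regime (B) is not established, and several steps rest on an incorrect ring structure and an incorrect mechanism.

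\textbf{Ring structure and lower bounds.} The mod~$2$ cohomology of $SP^n(N_g)$ is Kallel--Salvatore's; Macdonald's description concerns orientable surfaces. Its relations are $x_i^2=y=x_1^2$ for \emph{every} $i$, and $x_{i_1}\cdots x_{i_r}y^s=0$ exactly when $r+s>n$ (distinct indices). There is no relation of the form $x_i^2=x_1x_i$. These relations drive everything.
\begin{itemize}
\item Since $(x_{1,i}+x_{r,i})^2=(x_{1,1}+x_{r,1})^2$, you may assume exponents in $\{0,1\}$ on every zero-divisor $x_{1,i}+x_{r,i}$ with $i\ge 2$. This alone gives the upper bound in (A): at most $\zcl_k(P^{2n})$ from the $x_1$-part, plus at most $(k-1)(g-1)$.
\item In Davis's leading term $x_{1,1}^{\ell}\otimes x_{2,1}^{2n}\otimes\cdots\otimes x_{k,1}^{2n}$, the factors $2,\ldots,k$ are already full. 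So the extra classes must all land in the \emph{first} factor, not be ``spread over the $k$ factors''.
\item Your explanation of the even-$k$ slowdown is wrong. In characteristic $2$ a square of a sum has no cross terms, and $\binom{6}{2}=15$ is odd anyway. The real mechanism is this: a basis monomial $x_1^a\prod_{i\ge2}x_i^{\varepsilon_i}$ has degree at most $2n$ minus the number of odd exponents. For $k=2\lambda$ one has $x_i^{k-1}=y^{\lambda-1}x_i$, and $\ell=2n-\gap_k(P^{2n})$ is odd; write $\gap_k(P^{2n})=2h-1$.
\item Consequently, using all $k-1$ zero-divisors on every extra generator works only while $(g-1)\lambda<h$. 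In (B) one uses $k-1$ on some generators and $k-2$ on the rest.
\item For (C) you need an extra trick. Lower one odd Davis exponent to get $\ell-1\in\mathcal{L}_{n,k}$, which makes the first-factor power even, so that powers $x_i^{k-2}$ plus one partial even power fill it exactly to $x_1^{2n}$. Similarly, for odd $k$ and large $g$, partial exponents are needed on the last generator.
\end{itemize}

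\textbf{The upper bound in (B).} This is the decisive missing piece. You rightly flag it as the main obstacle, but you offer no workable argument. Reducing to mod-$2$ binomial coefficients ``as in Davis'' points the wrong way: the vanishing is not a Lucas phenomenon, and the only input needed from Davis is the numerical value of $\zcl_k(P^{2n})$. The paper instead uses the capacity
\[
c\Bigl(x_1^a\prod_{i\ge2}x_i^{\varepsilon_i}\Bigr)=n-\lfloor(a+1)/2\rfloor-\sum\varepsilon_i ,
\]
extended additively over tensor factors. It counts how many further distinct $x_i$'s a monomial can absorb. The argument runs as follows.
\begin{enumerate}
\item After the reduction above, any product is $m\cdot e$, where $m$ is a product of $x_1$-zero-divisors of degree $\zcl_k(P^{2n})-d$.
\item Every basis monomial of $m$ then has capacity at most $h+\lfloor(d-1)/2\rfloor$.
\item The $\delta_i\le k-1$ zero-divisors on generator $i$ lower capacity by at least $\lfloor(\delta_i+1)/2\rfloor$.
\item Minimize $\sum_{i\ge2}\lfloor(\delta_i+1)/2\rfloor$ over integers subject to $\delta_i\le k-1$ and $\sum\delta_i=d+h+(g-1)(\lambda-1)$. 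The minimum exceeds $h+\lfloor(d-1)/2\rfloor$.
\item Hence $m\cdot e=0$ whenever $\deg(m\cdot e)>2nk-t$, where $t=h-(g-1)(\lambda-1)$.
\end{enumerate}
Your ``weight recording in how many factors it can occur'' and the unspecified induction on $g$ do not supply this estimate. As written, your argument does not prove (B).
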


In view of~\eqref{lasuma}, Theorem~\ref{thm: zcl k ge 3} is equivalent to the following theorem that will be proven in Subsection~\ref{subsec: odd k proof} and Section~\ref{sec: k even}.

\begin{thm}\label{thm: gap k ge 3}
    Let $k\ge 3$ and $n,g\ge 1$ be integers.
    \begin{enumerate}[(a)]
        \item If $k\ge 3$ is odd, or if $k\ge 4$ is even and $g\le \left\lfloor \tfrac{\gap_k(P^{2n})}{k}\right\rfloor+1$, then we have that
 \[
    \gap_k(SP^n(N_g))=\max_{i\in S(2n)}\left\{0,2^{i+1}-1-kZ_i(2n)-(k-1)(g-1)\right\}.
    \]    \label{first}
    \item If $k\ge 4$ is even and $\left\lfloor \tfrac{\gap_k(P^{2n})}{k}\right\rfloor+2\le g\le\left\lfloor\tfrac{\gap_k(P^{2n})}{k-2}\right\rfloor+1$, then we have that
    \[
    \gap_k(SP^n(N_g))=\frac{\gap_k(P^{2n})+1}{2}-\left(\frac{k}{2}-1\right)\left(g-1\right).
    \]\label{second}
    \item If $k\ge 4$ is even and $g\ge\left\lfloor\tfrac{\gap_k(P^{2n})}{k-2}\right\rfloor+2$, then we have that 
    \[
    \gap_k(SP^n(N_g))=0.
    \]\label{third}
\end{enumerate}
\end{thm}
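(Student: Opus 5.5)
The plan is to reduce everything to an explicit computation in the mod-$2$ cohomology ring of $X=SP^n(N_g)$ and its $k$-fold tensor power. The proof should then split into a lower bound, obtained from explicit nonzero products of zero-divisors, and an upper bound, obtained from a degree/weight counting argument in $H^*(X^k)$.

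\emph{Step 1: the cohomology ring.} From Section~\ref{sec: prelims} (Macdonald/Kallel--Salvatore) I will take a presentation of $H^*(SP^n(N_g);\Z_2)$ of the following shape. Choose a basis $a_1,\dots,a_g$ of $H^1(N_g)$ and set $x=a_g$ and $e_j=a_j+a_g$. Then $x^2$ is the orientation class, and $e_j^2=0$, $e_je_l=0$, $e_jx=0$ in $N_g$. Correspondingly, $H^*(SP^n(N_g))$ is spanned by monomials $x^a e_I$ with $I\subseteq\{1,\dots,g-1\}$, subject to the relation $x^{a}e_I=0$ when $a+2|I|>2n$ (or its exact analogue from the cited description). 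The key features are:
\begin{enumerate}[(i)]
\item the subring generated by $x$ is $H^*(P^{2n})$;
\item each $e_j$ behaves like an exterior class that ``costs'' two units of $x$-height.
\end{enumerate}
The zero-divisors are generated by $\bar x_r=x_1+x_r$ and $\bar e_{j,r}=e_{j,1}+e_{j,r}$ for $2\le r\le k$, where subscripts record the tensor factor.

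\emph{Step 2: lower bounds on $\zcl_k$, i.e.\ upper bounds on $\gap_k$.} I start from Davis's optimal product of the $\bar x_r$'s realizing $\zcl_k(P^{2n})=2nk-\gap_k(P^{2n})$, using \eqref{eq: davis gap main}. I multiply it by products $\prod_r\bar e_{j,r}$ over $j\le g-1$.
\begin{itemize}
\item Each $\bar e_j$-block can contribute up to $k-1$ degrees. Its expansion trades $x$-powers in various factors, which lowers by $(k-1)$ per index $j$ the deficit $2^{i+1}-1-kZ_i(2n)$ seen at each $i\in S(2n)$. This gives case (a).
\item For even $k$ I will instead use the full product $\prod_{r=1}^k e_{j,r}$-type terms. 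This gives only $k/2-1$ net improvement per index once the deficit is below $k$, because of the parity cancellation $\binom{k}{\cdot}$ mod $2$. This should produce the formula in (b).
\item When the improvement exceeds the deficit, I get the top class $\prod_r x_r^{2n}$ or its equivalent, which gives (c).
\end{itemize}
Nonvanishing will be checked by isolating a single top-degree monomial whose coefficient is a product of binomial coefficients mod $2$, evaluated via Lucas's theorem.

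\emph{Step 3: upper bounds on $\zcl_k$.} I grade $H^*(X^k)$ by the number of $e$-factors. Any product of zero-divisors of length $L$ projects, after setting the $e$'s to zero in a controlled way, to a product in $H^*((P^{2n})^k)$ with shifted truncations. I then apply Davis's vanishing argument with each $e_j$ in use replaced by a loss of height $2$ in one factor. An optimization over how the $e_j$'s are distributed among factors recovers exactly the maxima in (a)--(b). The parity distinction comes from the fact that $\bar e_{j,2}\cdots\bar e_{j,k}$ has top term $\sum_r\prod_{s\ne r}e_{j,s}$, which behaves differently according to the parity of $k$.

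The main obstacle I expect is this upper bound in the even-$k$ intermediate regime (b). There one has to show that no cleverer mixture of $\bar x$ and $\bar e$ factors beats $(k/2-1)(g-1)$ per unit. My first attempt will be to use the Steenrod-square/height argument of Davis, together with the symmetry $x\mapsto x+e_j$ automorphisms, to reduce to the extreme distributions considered in Step 2.
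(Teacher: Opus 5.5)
Your overall architecture (extend Davis's extremal product by the new zero-divisors for the lower bound, count weights for the upper bound) matches the paper's. However, there are genuine gaps.

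First, the ring in Step~1 is wrong. Write $x_i$ for your $a_i$, so that $x=x_g$ and $e_j=x_j+x_g$. Then in $H^*(N_g)$ one gets $e_jx=x_jx_g+x_g^2=y\neq0$, and $e_je_l=y$ for $j\neq l$. Only $e_j^2=0$ holds. In fact, for $g\ge2$ no basis of $H^1(N_g;\Z_2)$ can have your Gram matrix, since the cup pairing is nondegenerate. In $SP^n(N_g)$ one actually has $x^ae_I\neq0$ iff $a+|I|\le 2n$; already for the Klein bottle $xe_1=y\neq0$, although $a+2|I|=3>2$. So your $e_j$ cost one unit of height and do interact with $x$. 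The two-unit cost belongs to the $x_i$ themselves, which are not exterior ($x_i^2=y$). The workable bookkeeping is the capacity $c(x_1^a\prod_{i\in I}x_i)=n-\lfloor (a+1)/2\rfloor-|I|$ for $I\subseteq\{2,\ldots,g\}$; it is additive over tensor factors, and a basis monomial is nonzero iff $c\ge0$.

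Capacity shows that nonvanishing in Step~2 is not a Lucas computation. The isolated monomial (first factor $x_1^{\ell}\prod_{i\ge2}x_i^{s_i}$, with $s_i$ the number of factors of index $i$; all other factors $x_1^{2n}$) arises from a single choice of terms. Whether it survives is a capacity inequality, and that inequality is exactly what cuts out the $g$-ranges in (a). It also shows that the even-$k$ behaviour has nothing to do with $\binom{k}{\cdot}$ mod $2$. Write $k=2\lambda$, $\gap_k(P^{2n})=2h-1>0$, and $x_{r,i}$ for $x_i$ in the $r$-th tensor factor. Then $x_{1,i}^{k-1}=x_{1,1}^{k-2}x_{1,i}$ costs $\lambda$ units of capacity, while $x_{1,i}^{k-2}$ costs only $\lambda-1$, and Davis's exponent $\ell=2n-2h+1$ is odd.

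Consequently, in (c) no extension of a product of the $x_{1,1}+x_{r,1}$ of maximal degree $\zcl_k(P^{2n})$ reaches the top class. Its summands have capacity at most $h-1$, while the $2h-1$ missing factors cost at least $h$. So you need the paper's extra step:
\begin{enumerate}
\item lower an odd exponent $a_{r_0,1}$ by one, which by Lucas's theorem keeps $x_{1,1}^{\ell-1}\otimes x_{2,1}^{2n}\otimes\cdots\otimes x_{k,1}^{2n}$ in the expansion;
\item put $k-2$ factors on each of $\alpha$ new indices and $2\beta$ on one more, where $h=\alpha(\lambda-1)+\beta$ with $0\le\alpha\le g-2$ and $0\le\beta\le\lambda-1$.
\end{enumerate}

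Second, and more seriously, Step~3 does not prove the upper bound in (b), which is the real content of the theorem. In (c) the upper bound is trivial. In (a) it follows by pushing even exponents of $x_{1,i}+x_{r,i}$, $i\ge2$, onto $x_{1,1}+x_{r,1}$ via $x_i^2=x_1^2$, and using the monomorphism $H^*(P^{2n})^{\otimes k}\hookrightarrow H^*(SP^n(N_g))^{\otimes k}$. This gives $\zcl_k(SP^n(N_g))\le\zcl_k(P^{2n})+(k-1)(g-1)$.

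A projection killing the $e_j$ is not injective, so vanishing of the image says nothing; it cannot yield upper bounds. Moreover, no ring map $H^*(SP^n(N_g))\to H^*(P^{2n})$ fixing $x$ and killing the $e_j$ exists: it would send every $x_i$ to $x$, hence $x_1x_2y^{n-1}=0$ to $x^{2n}\neq0$. Steenrod squares play no role here.

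The missing idea is the capacity estimate of Lemma~\ref{prop: final}, valid in the range $(g-1)(\lambda-1)<h\le(g-1)\lambda$ of (b). Write a candidate product as $m\cdot e$. Here $m$ is a product of the $x_{1,1}+x_{r,1}$ of degree $\zcl_k(P^{2n})-d$ (so $d\ge0$ if $m\neq0$), and $e$ is a product of distinct factors $x_{1,i}+x_{r,i}$ with $i\ge2$. The argument has three steps:
\begin{enumerate}
\item every basis summand of $m$ has capacity at most $\lfloor(2h-1+d)/2\rfloor=h+\lfloor(d-1)/2\rfloor$;
\item the $\delta_i\le k-1$ factors of index $i$ lower capacity by at least $\lfloor(\delta_i+1)/2\rfloor$;
\item an exchange argument shows $\sum_i\lfloor(\delta_i+1)/2\rfloor>h+\lfloor(d-1)/2\rfloor$ whenever $\sum_i\delta_i\ge d+h+(g-1)(\lambda-1)$.
\end{enumerate}
Hence $\zcl_k(SP^n(N_g))\le 2nk-t$ with $t=h-(g-1)(\lambda-1)$, which is exactly the formula in (b). Without this, (b) remains unproved.
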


Note that item~\eqref{second} is empty precisely when $\bigl\lfloor\tfrac{\gap_k(P^{2n})}{k}\bigr\rfloor=\bigl\lfloor\tfrac{\gap_k(P^{2n})}{k-2}\bigr\rfloor$, in which case items~\eqref{first} and~\eqref{third} are complementary.

Since $\zcl_k(SP^n(N_g))$ is a lower bound to both the $k$-th sequential topological complexity of $SP^n(N_g)$ and the Lusternik--Schnirelmann category of the homotopy cofiber of the $k$-th diagonal map $SP^n(N_g)\to (SP^n(N_g))^k$, our results on the former invariant have the following implications for the latter two invariants (see Section~\ref{sec: observations2} for a proof). 

\begin{corollary}\label{cor: tc_k}
   Let $k\ge 2$ and $n,g\ge 1$ be integers.
   \begin{enumerate}[(i)]
       \item If $k\ \ge \ 3$ is odd and $g\ \ge \ \left\lceil\tfrac{\gap_k(P^{2n})}{k-1}\right\rceil+1$, or if $k\ \ge\ 4$ is even and $g\ge\left\lfloor\tfrac{\gap_k(P^{2n})}{k-2}\right\rfloor+2$, or if $n=2^e$ for some $e\ge 0$ and $k\ge 3$, then we have that 
    \[
\TC_k(SP^n(N_g))=2nk=\cat(C_{\Delta_kSP^n(N_g)}).
    \] \label{2second}
    \item If $2^e\le n<2^{e+1}$ for some $e\ge 1$, then we have that
    \[
2^{e+2}-1\le\cat(C_{\Delta_2SP^n(N_g)})\le\TC_2(SP^n(N_g))\le 4n-1.
    \]
    In particular, the last three inequalities are equalities if $n=2^e$.
    \label{1first}
   \end{enumerate}
\end{corollary}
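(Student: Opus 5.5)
The plan is to sandwich everything between the zero-divisor cup length and the dimensional upper bound. I will use the chain of inequalities
\[
\zcl_k(X)\le \cat(C_{\Delta_kX})\le \TC_k(X)\le k\cdot\dim(X),
\]
valid for path-connected CW complexes $X$. The middle inequality is the relation from~\cite[Proposition~2.1~(2)]{CAC}, or rather its consequence $\cat(C_{\Delta_kX})\le\TC_k(X)$. The first inequality comes from the observation that zero-divisors are exactly the classes pulled back from the cofiber $C_{\Delta_kX}$, so a nonzero product of $r$ zero-divisors gives a nonzero $r$-fold product in the reduced cohomology of $C_{\Delta_kX}$. The last inequality is the standard dimensional bound. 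Since $\dim SP^n(N_g)=2n$, for item~(i) it suffices to show $\zcl_k(SP^n(N_g))=2nk$, i.e.\ $\gap_k(SP^n(N_g))=0$, in each of the listed situations.

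For item~(i) I will read these off from Theorem~\ref{thm: gap k ge 3}.
\begin{itemize}
\item For $k\ge4$ even with $g\ge\lfloor \gap_k(P^{2n})/(k-2)\rfloor+2$, item~(c) gives $\gap_k=0$ directly.
\item For $k$ odd, item~(a) gives $\gap_k(SP^n(N_g))=\max_{i\in S(2n)}\{0,\,2^{i+1}-1-kZ_i(2n)-(k-1)(g-1)\}$. By~\eqref{eq: davis gap main}, each term $2^{i+1}-1-kZ_i(2n)$ is at most $\gap_k(P^{2n})$. So if $(k-1)(g-1)\ge\gap_k(P^{2n})$, which is exactly $g\ge\lceil\gap_k(P^{2n})/(k-1)\rceil+1$, every term is $\le0$ and the gap vanishes.
\item For $n=2^e$, the binary expansion of $2n=2^{e+1}$ has a single $1$, so $S(2n)=\emptyset$ and $\gap_k(P^{2n})=0$. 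For odd $k$, item~(a) then gives gap $0$. For even $k\ge4$, we have $\lfloor 0/(k-2)\rfloor+2=2$. The case $g\ge2$ falls under item~(c), and the case $g=1$ satisfies $g\le\lfloor 0/k\rfloor+1$, so item~(a) applies with an empty maximum, giving $0$.
\end{itemize}
The one point needing care here is the convention that the maximum over an empty $S(2n)$ is $0$, i.e.\ the $0$ inside the max.

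For item~(ii), the upper bound $\TC_2\le 2\dim=4n$ is not sharp enough, so I need $\TC_2(SP^n(N_g))\le 4n-1$. I would invoke the standard improvement for $k=2$: Costa–Farber type results give $\TC(X)<2\dim X$ whenever $\pi_1(X)$ is finite or $X$ is not a $K(\pi,1)$ in the relevant range. Here $\pi_1(SP^n(N_g))=H_1(N_g)=\Z^{g-1}\oplus\Z_2$ for $n\ge2$. Alternatively, I would use Dranishnikov's result that $\TC(X)\le 2\dim X-1$ when the top obstruction class vanishes, e.g.\ for non-simply-connected manifolds whose $\pi_1$ has suitable cohomological dimension. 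The cleanest route is the general fact that for a closed $m$-manifold that is not a $K(\pi,1)$-type top-dimensional obstruction case, $\TC\le 2m-1$. Since $e\ge1$ forces $n\ge2$, and $SP^n(N_g)$ is then not aspherical (it has spherical classes coming from $SP^n$ of a surface), I expect the top obstruction to vanish. This is the step I expect to be the main obstacle, and I would check it against the known proofs for $P^{2n}$.

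For the lower bound in item~(ii), Theorem~\ref{thm: zcl k=2} gives $\zcl_2=\min(2^{e+2}+g-2,\,2^{e+1}+2n-1)$. Both quantities are at least $2^{e+2}-1$: the first because $g\ge1$, the second because $n\ge2^e$. Finally, when $n=2^e$ we have $4n-1=2^{e+2}-1$, so all the inequalities collapse to equalities.
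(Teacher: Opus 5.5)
Your item (i) is correct and follows the paper: you read off $\gap_k(SP^n(N_g))=0$ from Theorem~\ref{thm: gap k ge 3} in all three situations, including $\gap_k(P^{2n})=0$ when $n=2^e$, and your case checks are right. One correction applies there. The chain you start from is not available in general. \cite[Proposition~2.1~(2)]{CAC} does not give $\cat(C_{\Delta_kX})\le\TC_k(X)$; it only says, for abelian $\pi_1$, that one invariant is maximal exactly when the other is. In general one has only $\cat(C_{\Delta_kX})\le\TC_k(X)+1$, as in Theorem~\ref{thm: cat prelim}. For item (i) this is easy to repair: use the bound $\cat(C_{\Delta_kX})\le k\cdot\dim(X)$ from Theorem~\ref{thm: cat prelim}. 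Then $\zcl_k=2nk$ pins down $\cat(C_{\Delta_k})$ and $\TC_k$ separately, without comparing them.

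Item (ii) has two genuine gaps.

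\emph{The upper bound.} You leave $\TC_2(SP^n(N_g))\le 4n-1$ as an expectation, and the heuristic you offer does not work. Non-asphericity by itself does not force $\TC_2<2\dim$. Here it is especially unconvincing: $SP^n(N_g)$ is essential ($\cat=2n=\dim$), so its top-dimensional category obstruction does \emph{not} vanish. The missing input is the Cohen--Vandembroucq theorem \cite[Theorem~1.2]{CV2}: a connected closed non-orientable manifold of dimension $>1$ with abelian fundamental group has $\TC_2<2\dim$. This is exactly where $e\ge1$ enters. It gives $n\ge2$, hence $\pi_1(SP^n(N_g))\cong H_1(N_g;\Z)$ is abelian. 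For $n=1$ and $g\ge2$ the bound genuinely fails, since $\TC_2(N_g)=4$.

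\emph{The middle inequality.} The inequality $\cat(C_{\Delta_2SP^n(N_g)})\le\TC_2(SP^n(N_g))$ is part of what must be proved, and you obtain it only from the unjustified general chain. The paper gets it from \cite[Theorem~10~(2)]{GV}, applied using $\dim(X)<\TC_2(X)$. That hypothesis holds here because $\TC_2\ge\zcl_2\ge2^{e+2}-1>2n$, which is your own lower bound from Theorem~\ref{thm: zcl k=2}.

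With these two references inserted, your lower bound and the collapse to equalities when $n=2^e$ go through as written.
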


We note that in the case $g=1$ (i.e., when $SP^n(N_1)\cong P^{2n})$, our results recover those on the zero-divisor cup lengths of $P^{2n}$, see~\cite{Dav} (and also~\cite{CAG+} for $k=2$). Similarly, in the case $n=1$ (i.e., when $SP^1(N_g)=N_g$), our results recover the computation for $k\ge 3$ in~\cite[Section~5]{GGGL}, as well as the folklore fact that $\zcl_2(N_g)=3$ (see, for instance,~\cite[proof of Theorem~3.4.4]{Dr2}).

It is known from~\cite[Proposition~2.1~(2)]{CAC} that, for a space $X$ with abelian fundamental group, $\TC_k(X)$ attains its maximal possible value $k\cdot \dim(X)$ if and only if $\cat(C_{\Delta_kX})$ does so. Since $\pi_1(SP^n(N_g))$ is abelian for $n\ge 2$, Corollary~\ref{cor: tc_k} provides an explicit, new infinite family of examples for which the equalities $\TC_k(X)=\cat(C_{\Delta_kX})=k\cdot \dim(X)$ hold for each $k\ge 2$ (in fact, the previously-conjectured equality $\TC_2(X)=\cat(C_{\Delta_2X})=\TC^M(X)$ holds in these cases for the \emph{monoidal topological complexity} of~\cite{IS}, see Remark~\ref{rem: monoidal}). That the equality $\TC_2(X)=\cat(C_{\Delta_2X})$ holds for a few classes of spaces (such as spheres, path-connected $H$-spaces, simply connected closed symplectic manifolds, and real projective spaces) was shown in~\cite{GV}, while it follows from~\cite{Dr2,Dr3,CV1} that $\cat(C_{\Delta_2N_g})=3<4=\TC_2(N_g)$ for $g\ge 2$. In light of the latter inequality, the condition $e\ge 1$ in Corollary~\ref{cor: tc_k}~\eqref{1first} cannot be waived for $g\ge 2$.

\subsection*{Notations and conventions}
In this paper, we consider (co)homology with $\Z_2$-coefficients, unless specified otherwise. Similarly, zero-divisor cup lengths are considered only with $\Z_2$-coefficients. The tensor product is always taken over the ring of integers and denoted $\otimes$. The notation $\smile$ is omitted in writing cup products, i.e., we write $\alpha\beta$ for a cup product $\alpha\smile \beta$. We use the term \emph{map} to refer to both continuous functions and algebra homomorphisms. Unless otherwise noted, all spaces are assumed to be connected.

\section{Preliminaries and related work}\label{sec: prelims}

\subsection{Symmetric products of closed non-orientable surfaces} Let $N_g$ denote the closed non-orientable surface of genus $g\ge 1$, and $\Sigma_n$ the symmetric group on $n\ge 1$ symbols. A permutation $\sigma\in \Sigma_n$ permutes the factors in the Cartesian product $(N_g)^n$. The orbit space of this action is the $n$-th symmetric product of $N_g$, denoted $SP^n(N_g)$. It is well-known that $SP^n(N_g)$ is a closed smooth non-orientable $2n$-manifold. For example, $SP^1(N_g)=N_g$ and $SP^n(N_1)\cong P^{2n}$, see~\cite{KS}.

For $n>1\leq g$, it is a standard fact that $\pi_1(SP^n(N_g))\cong H_1(N_g;\Z)\cong \Z^{g-1}\oplus\Z_2$, see, for instance,~\cite{KT}. In particular, $SP^n(N_g)$ is not aspherical. However, as shown in~\cite{Ja}, $SP^n(N_g)$ is always essential in the sense of Gromov (since its Lusternik--Schnirlemann category is $2n$),  and its universal cover is $(n-1)$-connected. Because of fundamental group reasons, $SP^n(N_g)$ does not support a Riemannian metric of non-positive sectional curvature due to the Cartan--Hadamard theorem, and for $g\ge 2n+1$, it also does not support a Riemannian metric of non-negative Ricci curvature due to Bochner's theorem on the first Betti number. The cohomology ring of $SP^n(N_g)$ is known due to~\cite{KS}; we recall the description in Subsection~\ref{subsec: coho ring}. For a survey on these manifolds, we refer the reader to~\cite{BGZ}.

\subsection{Zero-divisor cup length and sequential topological complexity}\label{subsec: zero div} 
For a CW complex $X$ and integer $k\ge 2$, let $\Delta_{k,X}\colon X\to X^k$ denote the diagonal map. In cohomology with coefficients in a ring $R$, this induces an algebra homomorphism $\Delta_{k,X}^*\colon H^*(X^k;R)\to H^*(X;R)$. A cohomology class $\alpha\in \Ker(\Delta_{k,X}^*)$ is called a \emph{$k$-th $R$-zero-divisor} of $X$, and the length of the longest non-zero cup product of such zero-divisors is called the \emph{$k$-th $R$-zero-divisor cup length} of $X$, denoted $\zcl_{k,R}(X)$. We reserve the notation $\zcl_k(X):=\zcl_{k,\Z_2}(X)$. For this and the following terminology and results, we refer the reader to~\cite{Far,Far2,Ru,BGRT}.

The \emph{$k$-th sequential topological complexity} of $X$, denoted $\TC_k(X)$, is defined to be the smallest non-negative integer $n$ such that $X^k=\bigcup_{i=1}^{n+1}U_i$, where each $U_i\subset X^k$ is open and admits a map $s_i\colon U_i\to X^{[0,1]}$ satisfying
\[
s_i(x_1,\ldots,x_k)\left(\frac{j-1}{k-1}\right)=x_j
\]
for each $j\in \{1,\ldots,k\}$ and $(x_1,\ldots,x_k)\in U_i$. For future reference, we shall call $s_i$ a \emph{motion planner} on $U_i$.

A homotopy invariant closely related to, and much older than, $\TC_k(X)$ is the \emph{Lusternik--Schnirelmann category} (or LS-category for short) of $X$, which is denoted $\cat(X)$ and defined to be the smallest non-negative integer $m$ such that $X=\bigcup_{j=1}^{m+1}V_j$, where each $V_j$ is open and null-homotopic in $X$.

The following well-known result connects these invariants.
\begin{thm}\label{thm: tc prelim}
For a CW complex $X$, ring $R$, and integer $k\ge 2$, we have that $\max\{\zcl_{k,R}(X),\cat(X^{k-1})\}\le\TC_k(X)\le\cat(X^k)$. 
\end{thm}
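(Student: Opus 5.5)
The statement to prove is Theorem~\ref{thm: tc prelim}: $\max\{\zcl_{k,R}(X),\cat(X^{k-1})\}\le\TC_k(X)\le\cat(X^k)$. I will work throughout with the fibration
\[
e_k\colon X^{[0,1]}\to X^k,\qquad \gamma\mapsto\left(\gamma(0),\gamma\left(\tfrac{1}{k-1}\right),\ldots,\gamma(1)\right).
\]
A motion planner on $U\subset X^k$ is exactly a local section of $e_k$ over $U$. Hence $\TC_k(X)$ is the Schwarz genus of $e_k$, minus one. Also, $X^{[0,1]}$ deformation retracts onto the constant paths, and under this identification $e_k$ becomes the diagonal $\Delta_{k,X}$. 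All three inequalities will be proved as standard sectional-category estimates.

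\emph{Upper bound.} Let $V\subset X^k$ be open and null-homotopic, and assume $X$ is path-connected (the standing convention). I will construct a section of $e_k$ over $V$.
\begin{enumerate}
\item Take a null-homotopy $H\colon V\times[0,1]\to X^k$ with $H_0$ the inclusion and $H_1$ constant at some point $(p_1,\ldots,p_k)$.
\item Fix a path $\beta$ in $X$ through $p_1,\ldots,p_k$ at the times $\tfrac{j-1}{k-1}$. This is possible since $X$ is path-connected.
\item For each point of $V$, the $j$-th coordinate track $H^j$ runs from $x_j$ to $p_j$.
\item On each subinterval $\left[\tfrac{j-1}{k-1},\tfrac{j}{k-1}\right]$, concatenate the track of $x_j$ to $p_j$, then $\beta$ restricted to that subinterval, then the reversed track from $p_{j+1}$ back to $x_{j+1}$. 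Reparametrize so that the pieces fit into the subinterval.
\end{enumerate}
Every choice above is continuous in the point of $V$, so this gives a motion planner on $V$. A categorical cover of $X^k$ by $m+1$ sets is therefore a motion-planning cover, which proves $\TC_k(X)\le\cat(X^k)$.

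\emph{Lower bound $\cat(X^{k-1})\le\TC_k(X)$.} Fix $x_0\in X$ and embed $X^{k-1}$ into $X^k$ by $(x_2,\ldots,x_k)\mapsto(x_0,x_2,\ldots,x_k)$. Given a motion planner $s$ on $U\subset X^k$, let $W$ be the preimage of $U$ in $X^{k-1}$. The homotopy
\[
(t,(x_2,\ldots,x_k))\mapsto\Big(s(x_0,x_2,\ldots,x_k)\big(t\cdot\tfrac{1}{k-1}\big),\ \ldots,\ s(x_0,x_2,\ldots,x_k)\big(t\cdot\tfrac{k-1}{k-1}\big)\Big)
\]
takes the $j$-th coordinate at time $t$ to the path value at $t\cdot\tfrac{j-1}{k-1}$. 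At $t=1$ it is the identity on $W$, and at $t=0$ every coordinate equals $x_0$. So $W$ contracts in $X^{k-1}$, and a cover of $X^k$ by $n+1$ planner domains pulls back to a categorical cover of $X^{k-1}$.

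\emph{Lower bound $\zcl_{k,R}(X)\le\TC_k(X)$.} This is the standard cup-length argument, and I expect it to be the only step needing care.
\begin{enumerate}
\item Suppose $X^k=U_1\cup\cdots\cup U_{n+1}$ with planners, and let $u_1,\ldots,u_{n+1}$ be zero-divisors.
\item On each $U_i$, the section $s_i$ shows that the inclusion $U_i\hookrightarrow X^k$ factors up to homotopy through $e_k$. Thus it factors through $\Delta_{k,X}$.
\item Consequently $u_i$ restricts to zero on $U_i$, so $u_i$ lifts to a relative class in $H^*(X^k,U_i;R)$.
\item The relative cup product lands in $H^*\big(X^k,\bigcup_i U_i;R\big)=0$, so $u_1\cdots u_{n+1}=0$.
\end{enumerate}
Hence any nonzero product of zero-divisors has length at most $n$, which gives $\zcl_{k,R}(X)\le\TC_k(X)$. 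The relative cup product on open sets requires $X^k$ to be reasonable. Since $X$ is a CW complex, I will handle this through excisive couples, or by passing to neighbourhood deformation retracts.
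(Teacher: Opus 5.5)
Your proof is correct. The paper gives no proof of this statement: it quotes it as a well-known fact from \cite{Far,Far2,Ru,BGRT}. Your three steps are essentially the standard arguments found there:
\begin{itemize}
\item local sections of $e_k$ over null-homotopic open sets, for $\TC_k(X)\le\cat(X^k)$;
\item restricting planners to the slice $\{x_0\}\times X^{k-1}$ and contracting by reparametrizing the planned paths, for $\cat(X^{k-1})\le\TC_k(X)$;
\item the relative cup product argument, for $\zcl_{k,R}(X)\le\TC_k(X)$.
\end{itemize}
Your step 2 in the cup-length argument is the same observation the paper makes explicit in its proof of Theorem~\ref{thm: cat prelim}, where $s_i$ is used to deform $U_i$ into $\Delta_kX$.

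Your closing worry is unnecessary. For open subsets $A,B$ of an arbitrary space $Y$, the relative cup product $H^*(Y,A;R)\otimes H^*(Y,B;R)\to H^*(Y,A\cup B;R)$ is always defined in singular cohomology, because open sets form an excisive couple. So you need no CW or neighbourhood-deformation-retract hypothesis on $X^k$. This is convenient, since $X^k$ with the product topology need not be a CW complex.
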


It was shown in~\cite{Ja} that $\cat((SP^n(N_g)^m)=2mn$ for $m,n,g\ge 1$. Therefore, it follows that $2n(k-1)\le\TC_k(SP^n(N_g))\le 2nk$. Because $SP^n(N_1)\cong P^{2n}$, the integer $\TC_k(SP^n(N_1))$ coincides with one less than the minimal number of motion planning rules required to rotate a line in $\R^{2n+1}$ that is fixed along a revolving joint at the origin via $k$ positions in $\R^{2n+1}$. For $k=2$, this integer is the immersion dimension of $P^{2n}$ (see~\cite{FTY}); for $k\ge 3$, this integer is $2nk$ in some cases (depending on the combinatorics of $k$ and $n$) and undetermined in others, see~\cite{Dav}.

\subsection{LS-category of the homotopy cofiber of the diagonal map}
For a CW complex $X$ and integer $k\ge 2$, let $C_{\Delta_kX}$ denote the cofiber of the diagonal map $\Delta_{k,X}\colon X\to X^k$. Indeed, $C_{\Delta_kX}=X^k\cup_{\Delta_{k,X}}CX$ is the mapping cone of $\Delta_{k,X}$. Since the image $\Delta_kX$ of $X$ under the cofibration $\Delta_{k,X}$ is closed in $X^k$, we have that $C_{\Delta_kX}\simeq X^k/\Delta_kX$, see~\cite{Ha}. Consequently, because $(X^k,\Delta_{k}X)$ is a \emph{good CW-pair} (in the sense of~\cite{Ha}), we see that
\[
H^s(C_{\Delta_kX};R) \cong H^s(X^k,\Delta_kX;R)    
\]
for any $s\ge 1$ and ring $R$.

The following fact regarding the LS-category of $C_{\Delta_kX}$ is a direct generalization of the well-known case with $k=2$. The proof is short, and we record it for completeness.

\begin{thm}\label{thm: cat prelim}
    For a finite-dimensional CW complex $X$, ring $R$, and integer $k\ge 2$, we have that $\zcl_{k,R}(X)\le \cat(C_{\Delta_kX})\le\min\{\TC_k(X)+1, k\cdot\dim(X)\}$.
\end{thm}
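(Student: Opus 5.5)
The plan is to prove the three inequalities in turn, using standard facts about LS-category of mapping cones and the long exact sequence of the pair $(X^k,\Delta_kX)$.

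\emph{Lower bound.} From the long exact sequence of the good pair $(X^k,\Delta_kX)$ in $R$-cohomology, the image of $j^*\colon H^*(X^k,\Delta_kX;R)\to H^*(X^k;R)$ is exactly $\Ker(\Delta_{k,X}^*)$. Given zero-divisors $u_1,\dots,u_m$ with $u_1\cdots u_m\neq 0$, choose preimages $\tilde u_i\in H^*(C_{\Delta_kX};R)$ of positive degree, using $H^s(C_{\Delta_kX};R)\cong H^s(X^k,\Delta_kX;R)$ for $s\ge1$. Let $q\colon X^k\to C_{\Delta_kX}$ be the inclusion; $q^*$ is identified with $j^*$ in positive degrees. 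Then $q^*(\tilde u_1\cdots\tilde u_m)=u_1\cdots u_m\ne0$, so the $R$-cup length of $C_{\Delta_kX}$ is at least $m$. The classical cup-length bound $\text{cup-length}\le\cat$ then gives $\zcl_{k,R}(X)\le\cat(C_{\Delta_kX})$. The point to check is that positive-degree classes of the cone pulled back along $q$ are precisely the zero-divisors. This holds because $q$ factors through $X^k\to X^k/\Delta_kX$.

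\emph{Dimension bound.} $C_{\Delta_kX}\simeq X^k/\Delta_kX$ is a CW complex of dimension $k\cdot\dim X$, since $\Delta_kX$ is a subcomplex after suitable cellular approximation, or one can use the mapping cone with cone on $X$ of dimension $\dim X+1\le k\dim X$. It is path-connected, so the standard bound $\cat\le\dim$ gives $\cat(C_{\Delta_kX})\le k\cdot\dim(X)$.

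\emph{Bound $\TC_k(X)+1$.} Let $X^k=U_1\cup\dots\cup U_{t+1}$ with motion planners $s_i$, where $t=\TC_k(X)$. A motion planner on $U_i$ is equivalent to a homotopy in $X^k$ from the inclusion $U_i\hookrightarrow X^k$ to a map into $\Delta_kX$. Concretely, one deforms $(x_1,\dots,x_k)$ along the paths $s_i(x)|$ toward the point $s_i(x)(0)=x_1$, giving $H_\tau(x)=(s_i(x)((1-\tau)\tfrac{j-1}{k-1}))_j$. Composing with $q$, each $q(U_i)$-preimage region deforms in $C_{\Delta_kX}$ into the cone $CX$, which is contractible. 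Hence each $U_i$ becomes, after passing to the open sets $V_i=q(U_i)\cup(\text{open collar of the cone})$ in the mapping cone, contractible in $C_{\Delta_kX}$. Add one further open set, the open cone on $X$ (contractible), to cover the cone point. This gives $t+2$ categorical open sets, so $\cat(C_{\Delta_kX})\le\TC_k(X)+1$.

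The only delicate step is making the sets $V_i$ open in the mapping cone while keeping them null-homotopic. I would handle this by thickening with a collar neighbourhood $X\times[0,\epsilon)$ of the base of the cone, which deformation retracts onto $\Delta_kX$. Alternatively, I would cite the well-known $k=2$ argument, which transfers verbatim.
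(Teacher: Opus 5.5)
Your argument follows the paper's proof in all three parts.

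\begin{itemize}
\item The lower bound, via exactness for the pair $(X^k,\Delta_kX)$ together with the cup-length bound, is identical to the paper's.
\item The dimension bound is the same. Your cellular-approximation version via the mapping cone is, if anything, more careful than treating $\Delta_kX$ as a subcomplex. It does use $\dim X\ge 1$, but for a point everything is trivial.
\item Your deformation of $U_i$ into the diagonal is the paper's, sliding to $x_1$ instead of $x_k$.
\end{itemize}

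For the passage to an open cover of the cofiber, the paper simply cites the $k=2$ argument of Farber's Lemma~18.3. So the step you flag as delicate is the one place where you add something, and as written it does not work. With the full collar, $V_i=U_i\cup\bigl(X\times[0,\epsilon)\bigr)$ meets $X^k$ in $U_i\cup\Delta_kX$. This set is not open in $X^k$ unless $\Delta_kX\subset U_i$. Moreover, your homotopy $H$ is not defined at the points of $\Delta_kX\setminus U_i$ that you have added.

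The fix is to use only the part of the collar lying over $U_i$, namely $V_i=U_i\cup\bigl(\Delta_{k,X}^{-1}(U_i)\times[0,\epsilon)\bigr)$. Its preimage in $X^k\sqcup X\times[0,1]$ is open and saturated, so $V_i$ is open in $C_{\Delta_kX}$ and carries the quotient topology.

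Since $H$ does not fix $U_i\cap\Delta_kX$, you cannot glue $H$ to some motion of the collar in one go. Instead, do the null-homotopy in stages:
\begin{enumerate}
\item Push the collar down, $(y,s)\mapsto(y,(1-\tau)s)$, keeping $U_i$ pointwise fixed. This is continuous on $V_i\times[0,1]$ by the pushout description, and it maps $V_i$ into $U_i$.
\item Apply $H$, which ends in $\Delta_kX=X\times\{0\}$.
\item Slide up the cone to the vertex.
\end{enumerate}

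Because the sets $\Delta_{k,X}^{-1}(U_i)$ cover $X$, the sets $V_1,\ldots,V_{t+1}$ together with the open cone $X\times(0,1]$ cover $C_{\Delta_kX}$. This gives $\cat(C_{\Delta_kX})\le\TC_k(X)+1$, as you intended.
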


\begin{proof}
    First, we prove $\cat(C_{\Delta_kX})\le \TC_k(X)+1$. If $\TC_k(X)=m$, then we can write $X^k=\bigcup_{i=1}^{m+1}U_i$, where each $U_i\subset X^k$ is open and admits a motion planner $s_i\colon U_i\to X^{[0,1]}$. It then turns out that for each $i$, the inclusion $U_i\hookrightarrow X^k$ is homotopic to a map with values in $\Delta_kX$. Indeed, for any fixed $i$, using maps $\phi_r\colon [0,1]\to [0,1]$ defined by
    \[
    \phi_r(t):=\frac{t(k-r)+r-1}{k-1}
    \]
    for $1\le r \le k$, we can define the required deformation $H_i\colon U_i\times I\to X^k$ of $U_i$ as
    \[ H_i(x_1,\ldots,x_k,t):=\left(s_i(x_1,\ldots,x_k)(\phi_1(t)),\ldots,s_i(x_1,\ldots,x_k)(\phi_k(t))\right).
    \]
    A straightforward generalization of the proof of~\cite[Lemma~18.3]{Far2} then yields $\cat(C_{\Delta_kX})\le\TC_k(X)+1$. The other upper bound $\cat(C_{\Delta_kX})\le k\cdot\dim(X)$ holds because $C_{\Delta_kX}\simeq X^k/\Delta_kX$ and $\dim(X^k/\Delta_kX)\le k\cdot\dim(X)$. For the lower bound, suppose $\zcl_{k,R}(X)=n$, so that for $1\le i \le n$, there exist classes $\alpha_i\in H^{s_i}(X^k;R)$ satisfying $\alpha_1\cdots\alpha_n\ne 0$ and $\Delta_{k,X}^*(\alpha_i)=0$. For each $i$, the long exact sequence for the pair $(X^k,\Delta_kX)$ gives
    \[
    \cdots\to  H^{s_i}(X^k,\Delta_kX;R)\xlongrightarrow{j} H^{s_i}(X^k;R)\xlongrightarrow{\Delta_{k,X}^*} H^{s_i}(X;R)\to\cdots,
    \]
    from which we deduce that there exists $\beta_i\in H^{s_i}(X^k,\Delta_kX;R)\cong H^{s_i}(C_{\Delta_kX};R)$ such that $j(\beta_i)=\alpha_i$. It follows from the naturality of the cup product that $\beta_1\cdots\beta_n\ne 0$ in $H^*(C_{\Delta_kX};R)$. Since the $R$-cup length of any CW complex is a lower bound to its LS-category (see, for instance,~\cite[Proposition~1.5]{CLOT}), we have that $\zcl_{k,R}(X)=n\le\cat(C_{\Delta_kX})$.
\end{proof}

\subsection{Comparison from computations for $SP^n(M_g)$ and $P^{2n}$} 
Similar to the symmetric products of closed non-orientable surfaces, one has the $n$-th symmetric product of the closed orientable surface $M_g$ of genus $g\ge 0$, denoted $SP^n(M_g)$. This is a closed smooth orientable $2n$-manifold which has been well-studied in topology and geometry---see, for instance,~\cite{Mac,BGZ,KS,DCDJ}. There are several fundamental differences between $SP^n(M_g)$ and $SP^n(N_g)$; we highlight some of these with a $\TC$-flavor coming from Theorems~\ref{thm: zcl k=2} and~\ref{thm: zcl k ge 3} and the results of~\cite{Ja,DCDJ}.

\begin{enumerate}[(1)]
    \item The quantity $\zcl_{k,\Q}(SP^n(M_g))$ is much simpler to compute than $\zcl_k(SP^n(N_g))$. The assessment of the former quantity is based on Macdonald's description (see~\cite{Mac}) of the ring $H^*(SP^n(M_g);\Q)$, and the answer depends only on how $g$ compares with $n$. On the other hand, the computation of $\zcl_k(SP^n(N_g))$ uses Kallel--Salvatore's description (see Section~\ref{sec: cohomo}) of the ring $H^*(SP^n(N_g);\Z_2)$ and Davis's analysis of the case $g=1$ in~\cite{Dav}; the answer depends heavily on the binary expansion of $n$, on both the magnitude and parity of $k$, as well as on the comparison of $g$ with two ratios---indicated in the paragraph containing~\eqref{eq: davis gap main}---involving the invariant $\gap_k(P^{2n})$.

\item The value $\TC_{k}(SP^n(M_g))$ is known for each $k$, $n$ and $g$, while $\TC_k(SP^n(N_g))$ is unknown in many cases. Indeed, the former value is always equal to the quantity $\zcl_{k,\Q}(SP^n(M_g))$, while we (the authors) are aware of the equality $\zcl_k(SP^n(N_g))=\TC_k(SP^n(N_g))$ only when this number is maximal possible (see Corollary~\ref{cor: tc_k}). These comments also apply to the invariant $\cat(C_{\Delta_kX})$ for $X\in\{SP^n(M_g),SP^n(N_g)\}$. As a result, the computation of the $\TC$-generating function (see~\cite{FO,FKS}) of $SP^n(M_g)$ is simpler than that of $SP^n(N_g)$. In the former case, the $\TC$-generating rational function is explicitly described, with the integral polynomial in its numerator being a quadratic. In contrast, the numerator of the $\TC$-generating rational function of $SP^n(N_g)$ is mostly a cubic polynomial, and in general, we only have bounds on its degree, see Theorem~\ref{thm: rationality} below.
\end{enumerate}

As previously noted, the symmetric products of closed non-orientable surfaces generalize even-dimensional real projective spaces. The description of $\zcl_k(P^{2n})$ from~\cite{Dav} is simpler than that of $\zcl_k(SP^n(N_g))$. Indeed, our results show that for non-trivial genera (i.e., $g\ne 1$), the value $\zcl_k(SP^n(N_g))$ depends on both the magnitude of $g$ and the parity of $k$ --- see Example~\ref{ex: example} for an explicit example, and also the comment following the statement of Lemma~\ref{prop: final}.

All in all, the study of some homotopy invariants of $SP^n(N_g)$, such as the zero-divisor cup lengths and sequential topological complexities, is more involved than those of $SP^n(M_g)$ and $P^{2n}$.

\section{Consequences and observations}\label{sec: observations1}
In this section, we study the consequences of our main results for the zero-divisor cup lengths. We begin by looking at a simple example.

\begin{ex}\label{ex: example}
Suppose $n=51$, so that $2n=102=2^6+2^5+2^2+2^1$. We have from~\cite{Dav} that $\gap_k(P^{102})$ is equal to $127-25k$ if $2\le k \le 5$, $7-k$ if $5\le k \le 7$, and $0$ otherwise. Since $2,6\in S(102)$, $Z_2(102)=1$, and $Z_6(102)=25$, we get from Theorems~\ref{thm: zcl k=2} and~\ref{thm: gap k ge 3} the following computations for $SP^{51}(N_g)$:
\[
\gap_k(SP^{51}(N_g))=\begin{cases}
78-g & \text{ if }k=2,g\le 39;
\\
39 & \text{ if }k=2,g\ge 39;
\\
54-2g & \text{ if }k=3,g\le 26;
\\
30-3g & \text{ if }k=4,g\le 7;
\\
15-g & \text{ if }k=4,8\le g\le 14;
\\
2 & \text{ if }k=5,g=1;
\\
1 & \text{ if }k=6,g=1;
\\
0 & \text{ otherwise}.
\end{cases}
\]
It is clear from this example that the computation of $\gap_k(SP^n(N_g))$ is strictly more complicated than that of $\gap_k(P^{2n})$ in most cases.
\end{ex}

The following statement is a straightforward consequence of Theorem~\ref{thm: zcl k=2}. 

\begin{corollary}
    Let $2^e\le n<2^{e+1}$ for some $e\ge 0$. Then
    \[
   \lim_{g\to\infty} \zcl_2(SP^n(N_g))=2^{e+1}+2n-1.
    \]
\end{corollary}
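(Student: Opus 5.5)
This follows directly from Theorem~\ref{thm: zcl k=2}, with $n$ (and hence $e$) held fixed while $g$ varies. The plan is to observe that the threshold $2n-2^{e+1}+1$ separating the two cases depends only on $n$ and $e$, not on $g$. Since $2^e\le n<2^{e+1}$, this threshold is a finite integer between $1$ and $2^{e+1}-1$.

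Concretely, set $g_0:=2n-2^{e+1}+1$. For every $g\ge g_0$, the second case of Theorem~\ref{thm: zcl k=2} applies and gives $\zcl_2(SP^n(N_g))=2^{e+1}+2n-1$, a value independent of $g$. Thus the sequence $g\mapsto \zcl_2(SP^n(N_g))$ is eventually constant, equal to $2^{e+1}+2n-1$, and its limit as $g\to\infty$ is this constant.

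Two small checks complete the argument. First, the two formulas agree at $g=g_0$: indeed $2^{e+2}+g_0-2=2^{e+2}+2n-2^{e+1}-1=2^{e+1}+2n-1$, so Theorem~\ref{thm: zcl k=2} is consistent at the boundary. Second, as a sanity check, the limiting value $2^{e+1}+2n-1$ does not exceed $2\dim SP^n(N_g)=4n$, since $2^{e+1}\le 2n$. There is no real obstacle; all the content lies in Theorem~\ref{thm: zcl k=2}, which is assumed.
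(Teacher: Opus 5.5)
Your proof is correct and matches the paper, which simply records the corollary as a straightforward consequence of Theorem~\ref{thm: zcl k=2}. For fixed $n$, the threshold $2n-2^{e+1}+1$ does not depend on $g$, so $\zcl_2(SP^n(N_g))$ is eventually constant, equal to $2^{e+1}+2n-1$.
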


By Corollary~\ref{cor: tc_k}, this limit attains its maximal possible value $\TC_2(SP^n(N_g))$ when $n=2^e\ge 2$, but is less than $4n-1$ otherwise. In stark contrast to this case, we have the following when $k\ge 3$.

\begin{corollary}
    Fix $n\ge 1$. Then for $k\ge 3$, we have that
    \[
    \lim_{g\to \infty}\zcl_k(SP^n(N_g))=2nk \ \text{ and } \ \lim_{g\to \infty}\TC_k(SP^n(N_g))=2nk.
    \]
\end{corollary}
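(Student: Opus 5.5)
The plan is to fix $n\ge1$ and $k\ge3$ and show that $\gap_k(SP^n(N_g))=0$ for all sufficiently large $g$, using Theorem~\ref{thm: gap k ge 3}. Then~\eqref{lasuma} gives $\zcl_k(SP^n(N_g))=2nk$ eventually. For the second limit I would combine this with Theorem~\ref{thm: tc prelim} and the upper bound $\TC_k(SP^n(N_g))\le\cat((SP^n(N_g))^k)=2nk$ from~\cite{Ja}. This yields $2nk=\zcl_k\le\TC_k\le 2nk$ for large $g$, so $\TC_k$ is eventually constant equal to $2nk$.

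The key point is that $\gap_k(P^{2n})$ depends only on $n$ and $k$, not on $g$, by~\eqref{eq: davis gap main}. I would split according to the parity of $k$.

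\emph{Odd $k\ge3$.} Only case~\eqref{first} applies. Each term satisfies
\[2^{i+1}-1-kZ_i(2n)-(k-1)(g-1)\le \gap_k(P^{2n})-(k-1)(g-1).\]
Once $g\ge \bigl\lceil \gap_k(P^{2n})/(k-1)\bigr\rceil+1$, every term inside the maximum is $\le0$, so the maximum equals $0$.

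\emph{Even $k\ge4$.} As soon as $g\ge\bigl\lfloor \gap_k(P^{2n})/(k-2)\bigr\rfloor+2$, case~\eqref{third} applies directly and gives $\gap_k(SP^n(N_g))=0$.

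In both cases the gap vanishes for all large $g$, and the corollary follows. The only mildly delicate step is the odd case, and specifically the bound of each term in the maximum by $\gap_k(P^{2n})$, which holds because~\eqref{eq: davis gap main} is the maximum of the same terms with $g=1$. This is essentially the argument behind Corollary~\ref{cor: tc_k}\eqref{2second}, which one could alternatively cite directly.
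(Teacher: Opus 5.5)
Your proposal is correct and follows essentially the paper's proof. The paper likewise reads off $\zcl_k(SP^n(N_g))=2nk$ for large $g$ from parts (A) and (C) of Theorem~\ref{thm: zcl k ge 3} (equivalently (a) and (c) of Theorem~\ref{thm: gap k ge 3}), and then sandwiches $\TC_k$ via $\zcl_k\le\TC_k\le 2nk$; your explicit odd-case threshold $g\ge\lceil \gap_k(P^{2n})/(k-1)\rceil+1$ just spells out what the paper leaves implicit.
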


\begin{proof}
    The assertion for the limit on the left follows for each $k\ge 3$ directly by parts ~\eqref{1st} and~\eqref{3rd} of Theorem~\ref{thm: zcl k ge 3}. To justify the limit on the right, it suffices to note that $\zcl_k(SP^n(N_g))\le\TC_k(SP^n(N_g))\le2nk$ for each $k$ and $g$.
\end{proof}

We show in Corollary~\ref{cor: zcl with g changing} that $\zcl_k(SP^n(N_g))\le \zcl_k(SP^n(N_{g+1}))$ for fixed $n$ and $k$. Theorems~\ref{thm: zcl k=2} and~\ref{thm: zcl k ge 3} help us obtain an upper bound on $\zcl_k(SP^n(N_{g+1}))$ in terms of $\zcl_k(SP^n(N_g))$ in all cases. This is achieved by precisely controlling the linear growth of the $k$-th zero-divisor cup length (or equivalently, the linear decay of the $k$-th zero-divisors gap) when the genus $g$ increases by $1$.

\begin{proposition}\label{prop: zcl difference}
    Let $g\ge 1$ and $2^e\le n<2^{e+1}$ for some $e\ge 0$.
    \begin{enumerate}
        \item If $k=2$, then we have that $\zcl_2(SP^n(N_{g+1}))- \zcl_2(SP^n(N_g))\le 1$, where equality holds precisely when $g\le 2n-2^{e+1}$.\label{diff one}
        \item If $k\ge 3$ is odd, then $\zcl_k(SP^n(N_{g+1}))- \zcl_k(SP^n(N_g))\le k-1$, which is an equality if $\zcl_k(SP^n(N_{g+1}))\ne 2nk$, and is equal to $0$ if $g\ge\left\lceil\tfrac{\gap_k(P^{2n})}{k-1}\right\rceil+1$.\label{diff two}
        \item If $\ k\ \ge\ 4 \ $ is even and $\ g\ \le\ \left\lfloor\tfrac{\gap_k(P^{2n})}{k}\right\rfloor$, then we have the equality $\zcl_k(SP^n(N_{g+1}))- \zcl_k(SP^n(N_g))=k-1$.\label{diff three}
        \item If $k \ge 4$ is even and $g= \left\lfloor\tfrac{\gap_k(P^{2n})}{k}\right\rfloor+1$, then we have the equality
        \[
       \zcl_k(SP^n(N_{g+1}))- \zcl_k(SP^n(N_g))=\begin{cases}
           \frac{k+r-3}{2} & \ \text{ if } k\le 2g+r;
           \\
           g+r-1 & \ \text{ if } k\ge 2g+r+1,
       \end{cases} 
        \]
    where $r$ is the residue obtained after dividing $\gap_k(P^{2n})$ by $k$.
       \item If $k\ge 4$ is even and $\left\lfloor\tfrac{\gap_k(P^{2n})}{k}\right\rfloor+2\le g \le \left\lfloor\tfrac{\gap_k(P^{2n})}{k-2}\right\rfloor$, then we have the equality $\zcl_k(SP^n(N_{g+1}))-\zcl_k(SP^n(N_g))=\tfrac{k}{2}-1$.\label{diff five}
        \item If $k \ge 4$ is even and $g \ \ge \ \left\lfloor\tfrac{\gap_k(P^{2n})}{k-2}\right\rfloor+1$, then we have the equality $\ \zcl_k(SP^n(N_{g+1}))\ - \ \zcl_k(SP^n(N_g))=\gap_k(SP^n(N_{g}))$, which is positive precisely when $g =\left\lfloor\tfrac{\gap_k(P^{2n})}{k-2}\right\rfloor+1$ and $\gap_k(P^{2n})>0$.\label{diff six}
        \end{enumerate}
\end{proposition}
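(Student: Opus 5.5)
The plan is to read each item off the closed formulas: Theorem~\ref{thm: zcl k=2} for $k=2$, and Theorem~\ref{thm: gap k ge 3} (equivalently Theorem~\ref{thm: zcl k ge 3}) for $k\ge3$. By \eqref{lasuma}, $\zcl_k(SP^n(N_{g+1}))-\zcl_k(SP^n(N_g))=\gap_k(SP^n(N_g))-\gap_k(SP^n(N_{g+1}))$, so I will work with gaps throughout. I write $G=\gap_k(P^{2n})$, $q=\lfloor G/k\rfloor$, $r=G-qk$, and $A_i=2^{i+1}-1-kZ_i(2n)$ for $i\in S(2n)$. By \eqref{eq: davis gap main}, $G=\max\{0,\max_i A_i\}$.

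\textbf{The case $k=2$ and the odd case.} For $k=2$, set $c=2n-2^{e+1}+1$. The formula is $2^{e+2}+g-2$ for $g\le c$ and constant for $g\ge c$, and the two branches agree at $g=c$. Hence the difference is $1$ exactly when $g+1\le c$, i.e.\ $g\le 2n-2^{e+1}$, and $0$ otherwise. For $k\ge3$ odd, part (a) applies for every $g$, giving
\[\gap_k(SP^n(N_g))=\max\{0,\max_i(A_i-(k-1)(g-1))\}.\]
Each $A_i-(k-1)(g-1)$ drops by exactly $k-1$ when $g$ increases by $1$, so the maximum $M_g$ of these terms satisfies $M_{g+1}=M_g-(k-1)$. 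Therefore $\max\{0,M_g\}-\max\{0,M_{g+1}\}\le k-1$, with equality whenever $M_{g+1}>0$, that is, whenever $\zcl_k(SP^n(N_{g+1}))\ne 2nk$. If $g-1\ge \lceil G/(k-1)\rceil$, then $M_g\le 0$, so the difference is $0$.

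\textbf{Items (3) and (5).} Item~(3) is the same computation inside region (a): both $g$ and $g+1$ lie in (a), and $M_{g+1}\ge G-(k-1)q\ge G-kq=r\ge 0$. Hence $M_{g+1}=\max\{0,M_{g+1}\}$, the maximum is attained by the same index, and the difference is exactly $k-1$. For item~(5), both $g$ and $g+1$ lie in region (b), and the formula in (b) is linear in $g$ with slope $-(k/2-1)$, giving the difference $k/2-1$.

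\textbf{Items (4) and (6), and the main obstacle.} For item~(6), $g+1\ge\lfloor G/(k-2)\rfloor+2$ puts $g+1$ in region (c), so the difference is $\gap_k(SP^n(N_g))$. If $g$ is also in (c), this is $0$. If $g=\lfloor G/(k-2)\rfloor+1$ and $g\ge q+2$, then (b) gives
\[\tfrac{G+1}{2}-(\tfrac k2-1)\lfloor G/(k-2)\rfloor>0\]
when $G>0$; this uses that $G$ is odd in case (b), which is implicit in the formula. The degenerate case where $g$ lies in (a) is handled with $M_g$ as before.

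Item~(4) is the main obstacle: it compares region (a) at $g=q+1$ with region (b) (or (c)) at $g+1=q+2$. At $g=q+1$, the maximum of the $A_i$ is $G$ (if $G>0$), so $\gap_k(SP^n(N_g))=\max\{0,G-(k-1)q\}=r+q$. At $g+1$, if region (b) applies, the gap is
\[\tfrac{G+1}{2}-(\tfrac k2-1)(q+1)=\tfrac{r+q-k+3}{2}.\]
Subtracting gives $\tfrac{k+r+q-3}{2}$; with $g=q+1$ this should match the stated $\tfrac{k+r-3}{2}$ after one rewrites in terms of $g$. I would recheck the exact normalization here. If instead $q+2>\lfloor G/(k-2)\rfloor+1$, the gap at $g+1$ is $0$, giving the difference $g+r-1$. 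I expect the equivalence
\[q+1\le \lfloor G/(k-2)\rfloor \iff k\le 2g+r\]
(an elementary floor manipulation using $G=qk+r$) to be the delicate bookkeeping step. Parity of $G$ must also be handled carefully to ensure the half-integers are integers.
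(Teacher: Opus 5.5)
Your strategy is the paper's: pass to gaps via \eqref{lasuma} and read each item off Theorems~\ref{thm: zcl k=2} and~\ref{thm: gap k ge 3}. Items (1), (2), (3), (5) and (6) go through as you describe. Your treatment of (2) via $M_{g+1}=M_g-(k-1)$, and of (6) via $\tfrac{k-2}{2}\lfloor G/(k-2)\rfloor\le G/2<\tfrac{G+1}{2}$, is if anything more direct than the paper's. The problem is item (4), which you leave unresolved because of an arithmetic slip. With $G=qk+r$ and $g=q+1$, the region (b) gap at $g+1$ is
\[
\frac{G+1}{2}-\Bigl(\frac k2-1\Bigr)(q+1)=\frac{qk+r+1-(k-2)(q+1)}{2}=\frac{2q+r+3-k}{2},
\]
not $\tfrac{r+q-k+3}{2}$. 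Hence the difference is $(q+r)-\tfrac{2q+r+3-k}{2}=\tfrac{k+r-3}{2}$ exactly. Your value $\tfrac{k+r+q-3}{2}$ cannot be brought to the stated one by ``rewriting in terms of $g$'': it differs from it by $q/2$, so it disagrees whenever $q\ge1$. As written, your item (4) therefore contradicts the statement rather than proving it.

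The floor equivalence you single out as delicate is one line. Since $g=q+1$ and $G=(g-1)k+r$, the value $g+1$ lies in region (b) iff $g\le\lfloor G/(k-2)\rfloor$. That holds iff $g(k-2)\le (g-1)k+r$, i.e.\ iff $k\le 2g+r$. Otherwise $g+1$ lies in region (c), and the difference is the full gap $q+r=g+r-1$.

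Integrality is automatic. For $G>0$ and $k$ even, $G$ is odd, hence so is $r=G-qk$, so $k+r-3$ is even. If $G=0$, then $q=r=0$ and $g=1$, so the second branch applies and gives $0$, which is consistent.

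With this correction, your proof of item (4) is complete and matches the paper's computation.
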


\begin{proof}
As $\zcl_k(SP^n(N_{g+1}))-\zcl_k(SP^n(N_g))=\gap_k(SP^n(N_g))-\gap_k(SP^n(N_{g+1}))$, we will prove this proposition for the latter quantity for our convenience. Clearly, the assertions in~\eqref{diff one} and~\eqref{diff five} follow immediately from Theorems~\ref{thm: zcl k=2} and~\ref{thm: gap k ge 3}~\eqref{second}, respectively. In the other cases, we proceed as follows.

\begin{enumerate}[(2)]
    \item We are done if $\gap_k(SP^n(N_{g}))=0$, because then $\gap_k(SP^n(N_{g+1}))=0$ due to Corollary~\ref{cor: zcl with g changing}. Note that $\gap_k(SP^n(N_g))=\max\{0,\gap_k(P^{2n})-(k-1)(g-1)\}$ by ~\eqref{eq: davis gap main} and Theorem~\ref{thm: gap k ge 3}~\eqref{first}. So,
    \[
    g\ge\left\lceil\frac{\gap_k(P^{2n})}{k-1}\right\rceil+1\implies (g-1)(k-1)\ge\gap_k(P^{2n})\implies \gap_k(SP^n(N_g))=0.
    \]
Thus, as argued above, the difference is $0$ in this case. Next, let us assume $0<\gap_k(SP^n(N_{g}))\le\gap_k(P^{2n})$. Then we have from Theorem~\ref{thm: gap k ge 3}~\eqref{first} that
\begin{align*}
    0<\gap_k(SP^n(N_g))&=\max\left\{0,\gap_k(P^{2n})-(k-1)(g-1) \right\}
    \\
    & =\gap_k(P^{2n})-(k-1)(g-1).
\end{align*}
If $\zcl_k(SP^n(N_{g+1}))\ne 2nk$, then $0<\gap_k(SP^n(N_{g+1}))$ and so, in analogy with the above, we get $\gap_k(SP^n(N_{g+1}))=\gap_k(P^{2n})-(k-1)g$ . Therefore, the equality with $k-1$ follows in this case. Otherwise, if $\gap_k(SP^n(N_{g+1}))=0$, then $\gap_k(P^{2n})\le(k-1)g$, which gives $\gap_k(SP^n(N_g))\le k-1$. So, $k-1$ is indeed the general upper bound. 
\end{enumerate}

\begin{enumerate}[(3)]
\item The hypothesis on $g$ gives $\gap_k(P^{2n})>(k-1)g$. So, in analogy with the proof of~\eqref{diff two}, we get that $0<\gap_k(P^{2n})-(k-1)g=\gap_k(SP^n(N_{g+1}))$, and then the asserted equality follows similarly.
\end{enumerate}

\begin{enumerate}[(4)]
   \item We can write $\gap_k(P^{2n})=(g-1)k+r$, where $0\le r<k$. By Theorem~\ref{thm: gap k ge 3}~\eqref{first}, $\gap_k(SP^n(N_g))$ is equal to
   \[
   \max\left\{0,\gap_k(P^{2n})-(k-1)(g-1)\right\}=\max\left\{0,g+r-1\right\}=g+r-1
   \]
   since $g+r-1\ge 0$. If is not difficult to check that $g\le\left\lfloor\tfrac{\gap_k(P^{2n})}{k-2}\right\rfloor$ if and only if $k\le 2g+r$. So, if $k\le 2g+r$, then Theorem~\ref{thm: gap k ge 3}~\eqref{second} gives
   \[
   \gap_k(SP^n(N_{g+1}))=\frac{(g-1)k+r+1}{2}-\left(\frac{k}{2}-1\right)g=\frac{2g+r+1-k}{2},
   \]
   which means the difference is
   \[
    g+r-1-\frac{2g+r+1-k}{2}=\frac{k+r-3}{2}\le k-2.
   \]
   Otherwise, if $k\ge 2g+r-1$, then $\gap_k(SP^n(N_{g+1}))=0$ by Theorem~\ref{thm: gap k ge 3}~\eqref{third}, and so the difference is just $g+r-1$, which does not exceed $g+k-2$.
\end{enumerate}

\begin{enumerate}[(6)]
    \item Here, we have $\gap_k(SP^n(N_{g+1}))=0$ because of Theorem~\ref{thm: gap k ge 3}~\eqref{third}. Therefore, the asserted equality holds by definition. Note that $\gap_k(SP^n(N_{g}))=0$ when $g\ge \left\lfloor \tfrac{\gap_k(P^{2n})}{k-2} \right\rfloor+2$, and so the difference is $0$ in this range. However, if $g=\left\lfloor \tfrac{\gap_k(P^{2n})}{k-2} \right\rfloor+1$, which implies $g\le\tfrac{\gap_k(P^{2n})}{k-2}+1$, then we claim that 
\[
\gap_k(SP^n(N_{g}))=0\iff\gap_k(P^{2n})=0.
\]
The reverse implication is obvious. To prove the forward implication, assume that $\gap_k(SP^n(N_{g}))=0$. If $\left\lfloor\frac{\gap_k(P^{2n})}{k}\right\rfloor+2\le g$, then we get the contradiction
\[
1+\frac{\gap_k(P^{2n})+1}{k-2}=g>\frac{\gap_k(P^{2n})}{k-2}+1,
\]
where the first equality comes from Theorem~\ref{thm: gap k ge 3}~\eqref{second}. Hence, we must have $g\le\left\lfloor\tfrac{\gap_k(P^{2n})}{k}\right\rfloor+1$. In this range, Theorem~\ref{thm: gap k ge 3}~\eqref{first} gives
\[
0=\gap_k(SP^n(N_g))=\max_{i\in S(2n)}\left\{0,2^{i+1}-1-kZ_i(2n)-(k-1)(g-1) \right\}.
\]
If $\gap_k(P^{2n})>0$, then $g>1$ (in view of the equality above) and so the upper bound on $g$ implies $\gap_k(P^{2n})>(k-1)(g-1)$. As argued in the proof of~\eqref{diff two}, we then get $0<\gap_k(P^{2n})-(k-1)(g-1)=\gap_k(SP^n(N_g))$, which is a contradiction. So, $\gap_k(P^{2n})=0$ must be the case. Thus, in this case, the difference, namely $\gap_k(SP^n(N_g))$, is positive if and only if $\gap_k(P^{2n})>0$. 
\end{enumerate}
\end{proof}

\section{Topological complexity and the rationality conjecture}\label{sec: observations2}

We begin by proving Corollary~\ref{cor: tc_k}, whose implications we will subsequently discuss in this section.

\begin{proof}[Proof of Corollary~\ref{cor: tc_k}]
We first focus on the case of~\eqref{2second}. The assertion for $k\ge 4$ even is obvious from Theorem~\ref{thm: zcl k ge 3}~\eqref{3rd}. If $k\ge 3$ is odd and $g\ge \left\lceil\tfrac{\gap_k(P^{2n})}{k-1}\right\rceil+1$, then $\gap_k(SP^n(N_g))=0$ holds because Theorem~\ref{thm: gap k ge 3}~\eqref{first} implies the equality 
\[
\gap_k(SP^n(N_g))=\max\left\{0,\gap_k(P^{2n})-(k-1)(g-1)\right\}.
\] 
In view of this, $\zcl_k(SP^n(N_g))=\cat(C_{\Delta_kSP^n(N_g)})=\TC_k(SP^n(N_g))=2nk$ follows from Theorems~\ref{thm: tc prelim} and~\ref{thm: cat prelim}. If $n=2^e$ and $k\ge 3$ is arbitrary, then $\gap_k(P^{2n})=0$ due to~\cite{Dav}. So, we get from parts~\eqref{1st} and~\eqref{3rd} of Theorem~\ref{thm: zcl k ge 3} that
\begin{align*}
    2nk\ge \TC_k(SP^n(N_g)) & \ge \zcl_k(SP^n(N_g))=2nk \ \text{ and} 
    \\
    2nk\ge \cat(C_{\Delta_kSP^n(N_g)}) & \ge \zcl_k(SP^n(N_g))=2nk \ \text{ for all } \ k\ge 3.
\end{align*}
Next, we look at the case of~\eqref{1first}. Recall that $\pi_1(SP^n(N_g))\cong H_1(N_g;\Z)$ is abelian since $n\ge 2$, and that $SP^n(N_g)$ is a path-connected closed non-orientable $2n$-manifold. So,~\cite[Theorem~1.2]{CV2} implies that $\TC_2(SP^n(N_g))$ is not maximal, i.e., it does not exceed $4n-1$. From Theorems~\ref{thm: zcl k=2} and~\ref{thm: tc prelim}, we have that 
\begin{equation}\label{dimvsTC}
\dim(SP^n(N_g))=2n<2^{e+2}-1\le\zcl_2(SP^n(N_g))\le\TC_2(SP^n(N_g)).
\end{equation}
Thus, $\zcl_2(SP^n(N_g))\le \cat(C_{\Delta_2SP^n(N_g)})\le \TC_2(SP^n(N_g))$ holds by Theorem~\ref{thm: cat prelim} and~\cite[Theorem~10~(2)]{GV}, respectively. Finally, if $n=2^e$, then we have $\TC_2(SP^n(N_g))\le 4n-1\le\zcl_2(SP^n(N_g))$, so all inequalities are equalities.
\end{proof}

\begin{remark}
    The result from~\cite{CV2} used above says that $\TC_2(M)<2\dim(M)$ if $M$ is a connected closed non-orientable manifold of dimension greater than 1 and with abelian $\pi_1(M)$. This result does not extend to $\TC_k(M)$ for any $k\ge 3$ in view of Corollary~\ref{cor: tc_k}. 
\end{remark}

\begin{remark}\label{rem: monoidal}
    As observed in~\eqref{dimvsTC}, the inequality $\dim(SP^n(N_g))<\TC_2(SP^n(N_g))$ holds for $n,g\ge 1$, so~\cite[Theorem~2.5]{Dr1} gives $\TC_2(SP^n(N_g))=\TC^M(SP^n(N_g))$. Here, $\TC^M(X)$ stands for the \emph{monoidal topological complexity} of $X$ introduced in~\cite{IS}, which is a quantity equal to either $\TC_2(X)$ or $\TC_2(X)+1$ for a CW complex $X$, and conjectured to be always equal to $\TC_2(X)$, see~\cite{IS}. To the best of our knowledge, another conjecture on $\TC^M$ (see~\cite[Section~4]{Dr1}), that $\TC^M(X)=\cat(C_{\Delta_2X})$, is also still open. Our results give the equalities
    \[
\cat(C_{\Delta_2SP^{2^e}(N_g)})=\TC_2(SP^{2^e}(N_g))=\TC^M(SP^{2^e}(N_g))=2^{e+2}-1
    \]
    for $e,g\ge 1$. Therefore, the conjectures of both Iwase--Sakai and Dranishnikov are verified for infinitely many (non-trivial) symmetric products of closed non-orientable surfaces. The same assertion holds true for all symmetric products of closed orientable surfaces of genus $\ge 1$ due to~\cite[Theorem~6.8]{DCDJ} (see also~\cite{Ja}).
\end{remark}

We now verify the \emph{rationality conjecture} of Farber and Oprea~\cite{FO} for the $\TC$-generating function of $SP^n(N_g)$ for $n,g\ge 1$.

\begin{thm}\label{thm: rationality}
    Let $n,g\ge 1$ be integers. Then the $\TC$-generating function of $SP^n(N_g)$, namely the formal power series 
    \[
    f_{SP^n(N_g)}(t)=\sum_{k=1}^{\infty}\ \TC_{k+1}(SP^n(N_g))\ t^k,
    \]
    represents a rational function of the form 
    \[
    \frac{P_{SP^n(N_g)}(t)}{(1-t)^2},
    \]
    where $P_{SP^n(N_g)}(t)$ is an integral polynomial whose value at $t=1$ is $\cat(SP^n(N_g))$. Moreover, the degree of $P_{SP^n(N_g)}(t)$ does not exceed
    \[
        \left\lfloor \frac{\gap_2(P^{2n})}{g-1} \right\rfloor + 3
    \]
    when $g\ge 2$. In fact, the degree of $P_{SP^n(N_g)}(t)$ does not exceed $3$ if either of the following is true:
    \begin{enumerate}
     \item $g\ge \left\lfloor \tfrac{\gap_2(P^{2n})}{2} \right\rfloor+2$ and $n\ge 1$ is arbitrary;
        \item $n=2^e$ for some $e\ge 0$ and $g\ge 1$ is arbitrary.
    \end{enumerate}
\end{thm}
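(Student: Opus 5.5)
The plan is to show that $\TC_{k}(SP^n(N_g))=2nk$ for all $k\ge K+1$, for some explicit threshold $K$, and then compute the generating function directly. Suppose $\TC_{k+1}=2n(k+1)$ for all $k\ge K$. Write
\[
f(t)=\sum_{k\ge1}2n(k+1)t^k+\sum_{k=1}^{K-1}\bigl(\TC_{k+1}-2n(k+1)\bigr)t^k .
\]
The first sum equals $2n\bigl(\tfrac{1}{(1-t)^2}-1\bigr)$, so
\[
f(t)=\frac{P(t)}{(1-t)^2},\qquad P(t)=2n-2n(1-t)^2+(1-t)^2Q(t),
\]
where $Q$ is an integral polynomial of degree at most $K-1$. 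Evaluating at $t=1$ gives $P(1)=2n=\cat(SP^n(N_g))$, using the result of~\cite{Ja}. Moreover $\deg P\le\max\{2,K+1\}$. So every degree bound reduces to exhibiting a value of $K$ (beyond which $\TC_{k+1}$ is maximal), and we need $K\le 2$ for the cases (1) and (2).

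For maximality, I use Theorem~\ref{thm: tc prelim} together with $\zcl_{k}(SP^n(N_g))=2nk$, i.e. $\gap_k(SP^n(N_g))=0$. The key input is monotonicity of $\gap_k(P^{2n})$ in $k$. From~\eqref{eq: davis gap main}, each term $2^{i+1}-1-kZ_i(2n)$ is non-increasing in $k$, so $\gap_k(P^{2n})\le\gap_2(P^{2n})$ for all $k\ge2$. Now let $g\ge2$.

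\textbf{Odd $k\ge3$.} Corollary~\ref{cor: tc_k} gives maximality once $(k-1)(g-1)\ge\gap_k(P^{2n})$. This holds as soon as $k-1\ge \gap_2(P^{2n})/(g-1)$.

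\textbf{Even $k\ge4$.} We need $g\ge\lfloor\gap_k(P^{2n})/(k-2)\rfloor+2$. This holds if $(k-2)(g-1)>\gap_k(P^{2n})$, and in particular if $k-2>\gap_2(P^{2n})/(g-1)$.

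Both conditions are implied by $k\ge\lfloor\gap_2(P^{2n})/(g-1)\rfloor+3$. Hence we may take $K+1=\lfloor\gap_2(P^{2n})/(g-1)\rfloor+3$, giving $\deg P\le K+1=\lfloor\gap_2(P^{2n})/(g-1)\rfloor+3$. Some care is needed with the off-by-one bookkeeping between $k$ and the exponent $k-1$, and with the case $K\le1$.

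\textbf{Case (2), $n=2^e$.} Corollary~\ref{cor: tc_k}(i) gives $\TC_k=2nk$ for all $k\ge3$. So $K=2$ and $\deg P\le3$.

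\textbf{Case (1).} We want maximality for all $k\ge3$ when $g\ge\lfloor\gap_2(P^{2n})/2\rfloor+2$. For odd $k\ge3$: $(k-1)(g-1)\ge 2(g-1)>\gap_2(P^{2n})\ge\gap_k(P^{2n})$. For even $k\ge4$: $(k-2)(g-1)\ge2(g-1)>\gap_2\ge\gap_k$. In both cases Corollary~\ref{cor: tc_k} applies, so $K=2$ and the degree is at most $3$.

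The main obstacle is making sure the monotonicity $\gap_k(P^{2n})\le\gap_2(P^{2n})$ is legitimately read off~\eqref{eq: davis gap main}, including the $\max$ with $0$, and tracking the floor/ceiling thresholds of Corollary~\ref{cor: tc_k} exactly. If the general bound comes out one larger than claimed, I would refine it using $\gap_k\le\gap_2-(k-2)Z$ with $Z\ge1$ when $\gap_2>0$.
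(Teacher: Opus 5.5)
Your route is the paper's: get $\TC_k(SP^n(N_g))=2nk$ beyond a threshold from Corollary~\ref{cor: tc_k}~\eqref{2second} together with $\gap_k(P^{2n})\le\gap_2(P^{2n})$, then sum the tail. Your formula $P(t)=2n-2n(1-t)^2+(1-t)^2Q(t)$ is equivalent to the paper's computation, and your threshold arithmetic and cases (1) and (2) are fine.

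\textbf{The inequality $\gap_k(P^{2n})\le\gap_2(P^{2n})$.} Your justification for this key inequality does not work. Formula~\eqref{eq: davis gap main}, with $S(2n)$ as in Definition~\ref{defn: notation}, is valid only for $k\ge3$; that is how it is used in Subsection~\ref{subsec: prep}. At $k=2$ it is false whenever the two leading binary digits of $2n$ are not both $1$. For instance, for $n=2^e$ one has $S(2n)=\varnothing$, so plugging in $k=2$ would give $\gap_2(P^{2n})=0$. But Theorem~\ref{thm: zcl k=2} with $g=1$ (or \cite{CAG+}) gives $\gap_2(P^{2n})=4n-2^{e+2}+1=1$. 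Termwise monotonicity in $k$ therefore only compares the values for $k\ge3$ with each other. You still have to compare them with the true value $4n-2^{e+2}+1$.

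One quick way: in $H^*(P^{2n})^{\otimes k}$, the product $(x_{1,1}+x_{2,1})^{2^{e+2}-1}\prod_{r=3}^k(x_{1,1}+x_{r,1})^{2n}$ contains the basis element $x_{1,1}^{2^{e+2}-1-2n}\otimes x_{2,1}^{2n}\otimes\cdots\otimes x_{k,1}^{2n}$ from exactly one choice of terms. Its coefficient is $\binom{2^{e+2}-1}{2n}\equiv1$ by Lucas. Hence $\zcl_k(P^{2n})\ge2^{e+2}-1+2n(k-2)$, i.e.\ $\gap_k(P^{2n})\le4n-2^{e+2}+1=\gap_2(P^{2n})$. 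Alternatively, check directly from Definition~\ref{defn: notation} that $2^{i+1}-1-2Z_i(2n)\le4n-2^{e+2}+1$ for all $i\le e+1$.

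\textbf{The case $g=1$.} The rationality assertion with $P(1)=\cat(SP^n(N_g))$ is claimed for all $n,g\ge1$. You only exhibit a threshold when $g\ge2$ or $n=2^e$, so the case $g=1$ with $n$ not a power of $2$ is never treated. It is easily filled within your framework. For $k\ge3$ and $i\in S(2n)$ one has $i\le e+1$ and $Z_i(2n)\ge1$, since the last binary digit of $2n$ is $0$. So \eqref{eq: davis gap main} gives $\gap_k(P^{2n})=0$ for every $k\ge2^{e+2}-1$. Then Theorem~\ref{thm: tc prelim} and $\cat((P^{2n})^k)\le2nk$ give $\TC_k(P^{2n})=2nk$ for those $k$. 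The paper instead invokes \cite[Proposition~1.4]{Dav} for this case.

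With these two repairs your argument proves the statement.
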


Before supplying a proof, we make some remarks. Recall from~\cite{CAG+} that $\gap_2(P^{2n})=4n-2^{e+2}+1>0$ if $2^e\le n <2^{e+1}$ for some $e\ge 0$. Therefore, the condition in part (1) is $g\ge 2n-2^{e+1}+2$. Note that the rationality conjecture for $g=1$ can be directly verified using~\cite[Proposition 1.4]{Dav}. Indeed, one has $\TC_k(P^{2n})=\zcl_k(P^{2n})= 2nk$ if $k\ge 2^{\ell+1}-1$, where $\ell$ is the length of the longest string of consecutive $1$’s in the binary expansion of $n$. Since $\mathsf{cl}(P^{2n})=\cat(P^{2n})$ is well-known (see, for instance,~\cite{CLOT}), one can apply~\cite[Theorem~1]{FKS} to complete the verification. Such arguments work for $g\ge 2$ as well (see Remark~\ref{rem: on different zcls}); however, we will explicitly verify the rationality conjecture for $g\ge 2$ by giving a relatively better upper bound on the degree of $P_{SP^n(N_g)}(t)$.

\begin{proof}[Proof of Theorem~\ref{thm: rationality}]
    Fix $n\ge 1$ and $g\ge 2$. Note that 
    \[
    k\ge\left\lfloor\frac{\gap_2(P^{2n})}{g-1}\right\rfloor+3\iff k >\frac{\gap_2(P^{2n})}{g-1}+2\iff g \ge\left\lfloor \frac{\gap_2(P^{2n})}{k-2}\right\rfloor+2.
    \]
    The formula in~\cite[Theorem 1.6]{Dav} implies $\gap_2(P^{2n})\ge\gap_k(P^{2n})$ for $k\ge 3$. In view of this and the above implications, we see that
    \[
    k\ge\left\lfloor\frac{\gap_2(P^{2n})}{g-1}\right\rfloor+3\implies g\ge\left\lfloor \frac{\gap_k(P^{2n})}{k-2}\right\rfloor+2\ge     \left\lceil \frac{\gap_k(P^{2n})}{k-1}\right\rceil+1.
    \]
    Therefore, we apply Corollary~\ref{cor: tc_k}~\eqref{2second} to conclude that $\TC_k(SP^n(N_g))=2nk$ for $ k\ge\left\lfloor\tfrac{\gap_2(P^{2n})}{g-1}\right\rfloor+3$. For brevity, let $\D:=\left\lfloor\tfrac{\gap_2(P^{2n})}{g-1}\right\rfloor+3$. Then we can write
    \[
    f_{SP^n(N_g)}(t)=\sum_{k=1}^{\D-2}\ \TC_{k+1}(SP^n(N_{g})) \ t^k \ +\ 2n\sum_{k=\D-1}^{\infty} (k+1)t^k.
        \]
Of course, $\sum_{k=\D-1}^{\infty} (k+1)t^k=\tfrac{d}{dt}\left(\sum_{k=\D-1}^{\infty} t^{k+1}\right)=\tfrac{d}{dt}\left(\tfrac{t^{\D}}{1-t}\right)$. Hence,
\begin{multline*}
     f_{SP^n(N_g)}(t)=\sum_{k=1}^{\D-2}\ \TC_{k+1}(SP^n(N_{g})) \ t^k \ +\ 2n\ \frac{\D t^{\D-1}-(\D-1)t^{\D}}{(1-t)^2}
\\
=\frac{1}{(1-t)^2} \left(2n\left(\D t^{\D-1}-(\D-1)t^{\D}\right)\ + \ (1-t)^2\ \sum_{k=1}^{\D-2}\ \TC_{k+1}(SP^n(N_{g})) \ t^k \right).
\end{multline*}
Because $\cat(SP^n(N_g))=2n$ due to~\cite[Theorem~B]{Ja}, we see that the integral polynomial inside the above bracket indeed satisfies the asserted properties. This verifies the rationality conjecture in the general case with the upper bound $\D$ on the degree of $P_{SP^n(N_g)}(t)$.

In the case of (1), if $k\ge 3$ is odd, then
\[
g\ge\left\lfloor\frac{\gap_2(P^{2n})}{2}\right\rfloor+2 \ge \left\lfloor\frac{\gap_2(P^{2n})}{k-1}\right\rfloor+2\ge\left\lfloor\frac{\gap_k(P^{2n})}{k-1}\right\rfloor+2\ge\left\lceil \frac{\gap_k(P^{2n})}{k-1}\right\rceil+1.
\]
Similarly, if $k\ge 4$ is even, then the condition on $g$ implies
\[
g\ge\left\lfloor\frac{\gap_2(P^{2n})}{2}\right\rfloor+2 \ge \left\lfloor\frac{\gap_2(P^{2n})}{k-2}\right\rfloor+2\ge\left\lfloor\frac{\gap_k(P^{2n})}{k-2}\right\rfloor+2.
\]
Thus, Corollary~\ref{cor: tc_k}~\eqref{2second} gives $\TC_k(SP^n(N_g))=2nk$ for all $k\ge 3$. In the case of (2), we again get $\TC_k(SP^n(N_g))=2nk$ for all $k\ge 3$ by Corollary~\ref{cor: tc_k}~\eqref{2second}. So, in any case, we have the expression
\begin{align*}
 f_{SP^n(N_g)}(t)&=\TC_2(SP^n(N_g)) \ t\ + \ 2n\ \frac{d}{dt}\left(\sum_{k=2}^{\infty}t^{k+1} \right) 
\\
&=
\frac{1}{(1-t)^2}\left(\cat\left(SP^n(N_g)\right)\left(3t^2-2t^3\right) \ + \ \TC_2(SP^n(N_g)) \ t\left(1-t\right)^2\right).
\end{align*}
This completes the proof.
\end{proof}

The rationality conjecture, which is open for closed manifolds, has been verified for only a few classes of spaces (and even fewer classes of non-orientable manifolds), the most recent being that of the symmetric products of closed orientable surfaces, see~\cite{Ja} (and also~\cite[Section~8]{FO} and~\cite{HL} for some other known classes). Theorem~\ref{thm: rationality} extends the class of such (non-orientable) manifolds, for many of which the $\TC$-generating function can be explicitly computed.

\begin{remark}
    Fix some $n\ge 1$. Then it follows from the proof of Theorem~\ref{thm: rationality} that for $g\ge\left\lfloor \tfrac{\gap_2(P^{2n})}{2}\right\rfloor+2$ and $r\ge k\ge 3$, 
   \[
   \TC_r(SP^n(N_g))-\TC_k(SP^n(N_g))=(r-k)\cat(SP^n(N_g))=2n(r-k).
   \]
   In particular, the sequence $\{\TC_{k}(SP^n(N_g))\}_{k\ge 3}$ is an arithmetic progression with common difference $2n=\cat(SP^n(N_g))$ for $g$ moderately large. Of course, the same conclusions hold for $\cat(C_{\Delta_{k}SP^n(N_g)})$ as well in this range due to Corollary~\ref{cor: tc_k}~\eqref{2second}.
\end{remark}

We conclude this section by making an observation regarding the analog of the Ganea conjecture on LS-category~\cite{CLOT} for sequential topological complexity.

\begin{corollary}
    For a triplet $(n,g,k)$ satisfying the hypotheses of Corollary~\ref{cor: tc_k}~\eqref{2second}, we have that
    \[
    \TC_k(SP^n(N_g)\times S^m)=\TC_k(SP^n(N_g))+ \TC_k(S^m)
    \]
    holds for all odd $m\ge 1$. In particular, the $\TC_k$-Ganea conjecture holds for $SP^n(N_g)$ with respect to odd spheres.
\end{corollary}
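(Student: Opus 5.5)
The plan is to sandwich $\TC_k(SP^n(N_g)\times S^m)$ between two quantities that both equal $2nk+\TC_k(S^m)$. Recall that for odd $m\ge1$ one has $\TC_k(S^m)=k-1$. For a triplet $(n,g,k)$ as in Corollary~\ref{cor: tc_k}~\eqref{2second} we also know $\TC_k(SP^n(N_g))=2nk$. So the target is
\[
\TC_k(SP^n(N_g)\times S^m)=2nk+k-1 .
\]

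The upper bound comes from the standard product inequality for sequential topological complexity, $\TC_k(X\times Y)\le \TC_k(X)+\TC_k(Y)$, which holds for CW complexes (see~\cite{BGRT}). Applied here it gives $\TC_k(SP^n(N_g)\times S^m)\le 2nk+k-1$.

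For the lower bound we use $\Z_2$-zero-divisor cup length, via Theorem~\ref{thm: tc prelim}.
\begin{enumerate}[(1)]
\item Since $\zcl_k(SP^n(N_g))=2nk$ by Theorem~\ref{thm: zcl k ge 3}, or by the proof of Corollary~\ref{cor: tc_k}, there are zero-divisors $\alpha_1,\ldots,\alpha_{2nk}\in H^*((SP^n(N_g))^k)$ with nonzero product. This product is then the top class of $(SP^n(N_g))^k$.
\item For $S^m$ with generator $u$, set $u_j=\mathrm{pr}_j^*u$. The classes $u_j-u_1$, $j=2,\ldots,k$, are $k$-th zero-divisors, and their product $\prod_{j=2}^k(u_j-u_1)$ is nonzero in $H^*((S^m)^k;\Z_2)$. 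Indeed, it contains the monomial $u_2\cdots u_k$ with coefficient $1$, and no cancellation occurs since $u_1^2=0$. So $\zcl_k(S^m)=k-1$, and this holds over $\Z_2$ regardless of the parity of $m$.
\item Pull both families back to $(SP^n(N_g)\times S^m)^k\cong (SP^n(N_g))^k\times (S^m)^k$ along the projections. Pullbacks of zero-divisors are zero-divisors, because the diagonal commutes with the projections.
\item By the Künneth theorem over the field $\Z_2$, the product of all $2nk+k-1$ pulled-back classes is the cross product of two nonzero classes, hence nonzero. Therefore $\zcl_k(SP^n(N_g)\times S^m)\ge 2nk+k-1$, and Theorem~\ref{thm: tc prelim} gives the matching lower bound on $\TC_k$.
\end{enumerate}

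Combining the two bounds gives the equality. The $\TC_k$-Ganea statement is just a rephrasing of this equality for odd spheres.

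The main point to be careful about is quoting the product inequality and the value $\TC_k(S^{\rm odd})=k-1$ correctly from the literature~\cite{Ru,BGRT}. The cohomological lower bound itself is routine. I would also note that the argument never uses the oddness of $m$ except through the value of $\TC_k(S^m)$. This explains why the statement is restricted to odd spheres: for even $m$ one has $\TC_k(S^m)=k$, and the rational zero-divisor witnessing this has no $\Z_2$-analogue giving cup length $k$.
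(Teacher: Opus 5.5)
Your proof is correct and follows the same sandwich argument as the paper. For the lower bound you combine $\zcl_k(SP^n(N_g))=2nk$ and $\zcl_k(S^m)=k-1$ via superadditivity of $\Z_2$-zero-divisor cup length under products; you verify that superadditivity directly with the Künneth theorem, whereas the paper simply cites \cite{BGRT}. For the upper bound you use $\TC_k(X\times Y)\le\TC_k(X)+\TC_k(Y)$ together with $\TC_k(S^m)=k-1$ for odd $m$, exactly as the paper does.
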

\begin{proof}
For such triplets, we have $\zcl_k(SP^n(N_g))=\TC_k(SP^n(N_g))$. We also have from~\cite{Ru} that $\zcl_k(S^m)=\TC_k(S^m)=k-1$. The results of~\cite{BGRT} imply that $\zcl_k(SP^n(N_g))+\zcl_k(S^m)\le\zcl_k(SP^n(N_g)\times S^m)$ because $m$ is odd, and that $\TC_k(SP^n(N_g)\times S^m)\le\TC_k(SP^n(N_g))+\TC_k(S^m)$. This completes the proof.
\end{proof}

\section{Cohomological aspects}\label{sec: cohomo}

\subsection{The cohomology of $SP^n(N_g)$}\label{subsec: coho ring}

The description of the $\Z_2$-cohomology ring of $SP^n(N_g)$ is due to Kallel and Salvatore~\cite{KS}. We recall the details below.

The closed non-orientable surface $N_g$ (for $g\ge 1$) has a CW  structure consisting of one $0$-cell, $g$ $1$-cells, and one $2$-cell attached by the word $a_1^2a_2^2\cdots a_g^2$, where $a_i$ is the generator in $H_1(P_i^2)$ for $1\le i \le g$. Here, $P_i^2$ is a copy of the projective plane, and we use the connected sum expression $N_g=\#_{i=1}^gP_i^2$. The elements $a_i$ form a basis for $H_1(N_g)$. We use the same notations for their images in $H_1(SP^n(N_g))$ under the map induced by a basepoint inclusion $N_g \hookrightarrow SP^n(N_g)$. Let $b\in H_2(N_g)$ denote the $\Z_2$-fundamental class and its image in $H_2(SP^n(N_g))$. We denote the Hom duals of $a_i$ and $b$ by $x_i$ and $y$, respectively. In $H^*(N_g)$, we have $x_i^2=y$ and $x_ix_j=0$ for $i\ne j$.

\begin{thm}[\protect{\cite[Section 4]{KS}}]\label{thm: ks description}
For $n,g\ge 1$, the cohomology ring $H^*(SP^n(N_g))$ is generated by the degree $1$ classes $x_1,\ldots,x_g$ and the degree $2$ class $y$, subject only to the following relations:
\begin{enumerate}
\item $x_i^2=y$ for each $1\le i\le g$, and
\item $x_{i_1}\cdots x_{i_r}y^s=0$ whenever $r+s > n$ for distinct indices $i_1,\ldots,i_r$.
\end{enumerate}
\end{thm}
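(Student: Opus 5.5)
Since this description is quoted from \cite{KS}, we only outline how we would reprove it. The strategy is to compare $SP^n(N_g)$ with the infinite symmetric product $SP^\infty(N_g)$. There the ring structure is forced by the Dold--Thom theorem, and $H^*(SP^n(N_g))$ should appear as an explicit quotient.

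\emph{The infinite symmetric product.} By Dold--Thom, $SP^\infty(N_g)$ is a product of Eilenberg--MacLane spaces determined by $\tilde H_*(N_g;\Z)$. Here $H_1(N_g;\Z)\cong\Z^{g-1}\oplus\Z_2$ and $H_2(N_g;\Z)=0$, so
\[
SP^\infty(N_g)\simeq T^{g-1}\times P^\infty .
\]
Its mod $2$ cohomology is therefore $\Lambda[u_1,\ldots,u_{g-1}]\otimes\Z_2[w]$, with all classes in degree $1$.

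\emph{Defining the generators and relation (1).} Pull these classes back along the composite $N_g\hookrightarrow SP^n(N_g)\hookrightarrow SP^\infty(N_g)$. This is an isomorphism on $H^1$, so the $x_i$ are determined as classes in $H^1(SP^\infty(N_g))$. The condition $x_ix_j=0$ in $H^*(N_g)$ for $i\neq j$ forces, after a change of basis, each $x_i$ to equal $w$ plus a combination of the $u_j$ with $w$-coefficient $1$. Since $u_j^2=0$ and we work mod $2$, every $x_i^2$ equals $w^2$. We then set $y:=w^2$ and check that it restricts to the generator of $H^2(N_g)$. Relation (1) follows in $SP^\infty$ and hence in $SP^n$ by naturality.

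\emph{Surjectivity, relation (2), and a basis.} Two facts make the quotient description work.
\begin{enumerate}
\item Steenrod's splitting (equivalently Dold's theorem) says $H_*(SP^n(X))\to H_*(SP^\infty(X))$ is split injective. Its image is spanned by the Pontryagin products of at most $n$ reduced classes from $H_*(X)$. This gives an increasing "weight" filtration on $H_*(SP^\infty(N_g))$.
\item Dually, $H^*(SP^\infty(N_g))\to H^*(SP^n(N_g))$ is surjective. So $H^*(SP^n(N_g))$ is generated by the $x_i$ and $y$.
\end{enumerate}
Let $I$ denote the span of the monomials $x_{i_1}\cdots x_{i_r}y^s$ with distinct indices and $r+s>n$. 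Every monomial in $H^*(SP^\infty(N_g))$ reduces via $x_i^2=y$ to this normal form. We would show two things:
\begin{enumerate}
\item The kernel of the restriction map is exactly the annihilator of the weight-$\le n$ homology, and this annihilator equals $I$. This gives relation (2).
\item The surviving monomials, those with $r+s\le n$, are linearly independent.
\end{enumerate}
For the second point we would count: the number of such monomials in each degree should match the mod $2$ Betti numbers of $SP^n(N_g)$, which Macdonald/Milgram-type formulas give in terms of $H_*(N_g)$ alone. That matching also shows no further relations are needed.

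\emph{Main obstacle.} The hard step is proving that the kernel of $H^*(SP^\infty)\to H^*(SP^n)$ is an ideal generated precisely by the normal-form monomials of weight $>n$. This needs the weight filtration to be compatible with cup products. We would attack it through the Hopf algebra structure: $SP^\infty(N_g)$ is an abelian topological monoid, and the Pontryagin product should be dual to a coproduct under which each of $x_i$ and $y$ is primitive of weight $1$. If a monomial of weight $r+s$ is shown to pair to zero with all products of fewer than $r+s$ reduced homology classes, the ideal claim follows. Checking that $y=w^2$, and not merely $w$, has weight $1$ (it is dual to the fundamental class $b$ coming from $SP^1$) is the delicate point. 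We would verify it first in the case $g=1$, where $SP^n(P^2)\cong P^{2n}$ gives the answer $\Z_2[x]/(x^{2n+1})$ directly.
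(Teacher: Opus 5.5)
\textbf{There is a genuine gap in the step that carries relation (2).} Most of the scaffolding is sound. You have $SP^\infty(N_g)\simeq T^{g-1}\times P^\infty$, and relation (1) pulls back from there. One small correction: it is $x_i^2=y\neq 0$ in $H^*(N_g)$, not $x_ix_j=0$, that forces every $x_i$ to have $w$-coefficient $1$ and $w^2$ to restrict to $y$. Surjectivity of $H^*(SP^\infty(N_g))\to H^*(SP^n(N_g))$ via Steenrod's splitting and a closing Betti-number count are also fine. The paper itself gives no proof here; it quotes \cite{KS}.

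The problem is your first ``fact'': mod $2$, the image of $H_*(SP^n(X))$ in $H_*(SP^\infty(X))$ is \emph{not} spanned by Pontryagin products of at most $n$ reduced classes of $X$. Already for $g=1$ this fails.
\begin{itemize}
\item $SP^\infty(P^2)\simeq P^\infty$ has the divided-power Pontryagin ring $e_ie_j=\binom{i+j}{i}e_{i+j}$. So $e_1^2=0=e_2^2$, and products of the classes $e_1,e_2$ coming from $P^2$ only reach $e_0,\dots,e_3$.
\item But $H_*(SP^n(P^2))=H_*(P^{2n})$ injects onto the span of $e_0,\dots,e_{2n}$.
\item Your annihilator would therefore be $(x^4)$ instead of $(x^{2n+1})$ for every $n\ge 2$.
\end{itemize}
So the proposed identification of the kernel with $I$ cannot go through.

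\textbf{The Hopf-algebra fix in your last paragraph does not repair this.} The relevant ``weight'' is not multiplicative: $x_i$ and $x_i^2=y$ both have weight $1$, and $x_1^m$ has weight $\lceil m/2\rceil$. The filtration $F_n=\operatorname{Im}H_*(SP^n)$ is also not generated by $F_1$ under the Pontryagin product. Pairing against products of reduced classes therefore tests the wrong subspace.

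What you actually need is a direct proof that $x_{i_1}\cdots x_{i_r}y^s$ restricts to zero on $SP^n(N_g)$ when $r+s>n$; this is the content of \cite[Lemma~19]{KS}. The case $r=0$ holds for dimension reasons. For $r\ge1$, one way is:
\begin{enumerate}
\item Use the pinch map $f\colon N_g\to\bigvee_{j=1}^g P^2_j$, with $f^*z_j=x_j$. By relation (1), $x_{i_1}\cdots x_{i_r}y^s$ is the $SP^n(f)^*$-image of $z_{i_1}^{2s+1}z_{i_2}\cdots z_{i_r}$.
\item Let $n_j$ be the number of non-basepoint points in the $j$-th summand. Since $r+s>n$, $SP^n(\bigvee_jP^2_j)$ is covered by the subcomplexes $\{n_{i_1}\le s\}$ and $\{n_{i_t}=0\}$ for $2\le t\le r$.
\item On these subcomplexes the factors $z_{i_1}^{2s+1}$ and $z_{i_t}$ vanish, because they factor through $H^*(P^{2s})$ and $H^*(\mathrm{pt})$ respectively.
\item The product then vanishes by the relative cup-product argument.
\end{enumerate}
Independence of the surviving monomials then follows from your count. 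For that count you can use Dold's theorem, comparing $N_g$ with $\bigvee_{g-1}S^1\vee P^2$, which has the same integral homology.
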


It is convenient to use $I_{n,g}$ to denote the ideal generated by the above two sets of relations. Then the theorem says that $H^*(SP^n(N_g))\cong\Z_2\left[x_1,x_2,\ldots,x_g,y\right]/I_{n,g}$.

\begin{remark}\label{rem: ks non-zero}
It follows from~\cite[Lemma 19]{KS} that a monomial $x_{i_1}\cdots x_{i_r}y^s$ in $H^*(SP^n(N_g))$ with distinct indices $i_1,\ldots,i_r$ vanishes if and only if $r+s>n$. 
\end{remark}

In fact, the following description can be derived from~\cite[Theorem~6]{KS}.

\begin{thm}\label{thm: additive basis}
For $n,g\ge 1$, an additive basis (the standard one for our purposes) for the vector space $H^*(SP^n(N_g))$ is formed by the monomials
\[
y^{t}\ \prod_{i=1}^gx_{i}^{\varepsilon_i} = x_1^{2t+\varepsilon_1}\ \prod_{i=2}^gx_{i}^{\varepsilon_i}
\]
satisfying $\varepsilon_i\in\{0,1\}$, $t\ge 0$, and $t+\sum_{i=1}^g\varepsilon_i\le n$, where we write $y=x_1^2$ without loss of generality (cf. Theorem~\ref{thm: ks description}).
\end{thm}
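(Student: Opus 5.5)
The plan is to deduce the basis statement of Theorem~\ref{thm: additive basis} from the presentation in Theorem~\ref{thm: ks description}, together with the non-vanishing criterion of Remark~\ref{rem: ks non-zero}. The argument has two halves: a spanning argument and a linear independence argument. Spanning is purely algebraic. Independence will come from a dimension count, which I take from the Betti numbers of $SP^n(N_g)$ as computed in~\cite[Theorem~6]{KS}.

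For spanning, observe that every element of $H^*(SP^n(N_g))$ is a polynomial in $x_1,\ldots,x_g,y$. Using relation (1), namely $x_i^2=y$, any monomial reduces to one of the form $y^t\prod_i x_i^{\varepsilon_i}$ with $\varepsilon_i\in\{0,1\}$. By relation (2), this monomial vanishes unless $t+\sum\varepsilon_i\le n$. Hence the listed monomials span. The rewriting $y^t=x_1^{2t}$ is only cosmetic: it follows from $x_1^2=y$ and merges $y^t x_1^{\varepsilon_1}$ into $x_1^{2t+\varepsilon_1}$.

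For independence, the listed monomials are nonzero by Remark~\ref{rem: ks non-zero}, but that alone does not rule out linear relations among them. I would therefore count dimensions degree by degree. In degree $d$, the listed monomials correspond to pairs $(t,E)$ with $E\subseteq\{1,\ldots,g\}$, $2t+|E|=d$ and $t+|E|\le n$. Their number is
\[
\sum_{r\equiv d\ (2),\ r\le d,\ (d+r)/2\le n}\binom{g}{r}.
\]
I would compare this with $\dim H^d(SP^n(N_g))$. Macdonald-type/Kallel--Salvatore results give the Poincaré series of $SP^n(X)$ over $\Z_2$ as the coefficient of $u^n$ in
\[
\frac{(1+uz)^g}{(1-u)(1-uz^2)}
\]
(for $X$ with $b_0=1$, $b_1=g$, $b_2=1$). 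Expanding this coefficient reproduces exactly the count above: choose $|E|=r$ from $(1+uz)^g$, choose $t$ from the factor $(1-uz^2)^{-1}$, and let $(1-u)^{-1}$ absorb the slack $n-t-r\ge0$. Since the listed monomials span each graded piece and their number equals its dimension, they form a basis.

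The main obstacle is justifying the dimension formula. Without it, one would have to prove directly that the ideal $I_{n,g}$ contains no further relations. As an alternative route, I would use a pairing argument in the style of Poincaré duality over $\Z_2$: for each basis monomial $y^tx_E$, the complementary monomial $y^{\,n-t-|E|}x_{E^c\cap F}$ should pair to the top class, giving triangularity with respect to a suitable order. If the Poincaré series is not available, this duality argument becomes the crux.
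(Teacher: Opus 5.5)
Your argument is correct, and it is essentially the paper's route: the paper only says the basis can be derived from \cite[Theorem~6]{KS}, and combining spanning via the relations of Theorem~\ref{thm: ks description} with the $\Z_2$-Betti numbers supplied by Kallel--Salvatore is the natural way to unpack that citation. The dimension count you call the main obstacle can actually be avoided: in $\Z_2[x_1,\ldots,x_g,y]/(x_1^2+y,\ldots,x_g^2+y)$, which has basis $\{y^tx_E\}$ with $x_E=\prod_{i\in E}x_i$, the span of the $y^tx_E$ with $t+|E|>n$ is already an ideal (multiplying by $x_j$ gives $y^tx_{E\cup\{j\}}$ or $y^{t+1}x_{E\setminus\{j\}}$), so it equals the image of $I_{n,g}$ and independence follows at once; likewise, in your fallback pairing the partner of $y^tx_E$ should be $y^{n-t-|E|}x_E$, which pairs diagonally to $y^n\neq0$.
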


Taking $g=1$ (resp. $n=1$), Theorem~\ref{thm: additive basis} recovers the cohomology ring of $H^*(P^{2n})$ for $n\ge 1$ (resp. $H^*(N_g)$ for $g\ge 1$).

\begin{remark}\label{rem: relocatedcontent}
For $g'\ge g$, consider the ring homomorphism
\[
\iota\colon H^*(SP^n(N_g))\to H^*(SP^n(N_{g'}))
\]
given by $\iota(y)=y$ and $\iota(x_i)=x_i$ for $1\le i \le g$. Since $\iota$ maps the additive basis of its domain to a subset of the additive basis of its codomain, it is a monomorphism. We will need the ring monomorphism $H^*(SP^n(N_g))^{\otimes k}\to H^*(SP^n(N_{g'}))^{\otimes k}$ given by the $k$-fold tensor product of $\iota$.
\end{remark}

\subsection{The capacity function and the cohomology of products of $SP^n(N_g)$}\label{subsec: coho ring2}
For a given integer $k\ge 2$, we take the tensor product additive basis in the ring $H^*((SP^n(N_g))^k)\cong H^*(SP^n(N_g))^{\otimes k}$ coming from the basis in Theorem~\ref{thm: additive basis}. For $1\le r\le k$, let $\proj_r\colon (SP^n(N_g))^k \to SP^n(N_g)$ be the projection onto the $r$-th Cartesian factor. For $1\le i \le g$, let
\begin{equation}\label{defintionofxri}
x_{r,i}:=\proj_r^*(x_i) = 1\otimes\cdots\otimes 1 \otimes x_i\otimes 1\otimes\cdots\otimes 1 \in H^*(SP^n(N_g))^{\otimes k},
\end{equation}
where $x_i$ appears only at the $r$-th position.

\begin{definition}\label{def: capacity}
The \emph{capacity} of a basis element $x_1^ax_2^{\varepsilon_2}\cdots x_g^{\varepsilon_g}\in H^*(SP^n(N_g))$ in Theorem~\ref{thm: additive basis} is defined as
\[c\left(x_1^a\prod_{i=2}^gx_i^{\varepsilon_i}\right)=n-\left(\left\lfloor\frac{a+1}{2}\right\rfloor+\sum_{i=2}^g\varepsilon_i\right)\ge 0.
\]
We also set $c(0)=-\infty$. Similarly, the \emph{capacity} of a tensor product basis element $b_1\otimes\cdots\otimes b_k\in H^*(SP^n(N_g))^{\otimes k}$ is defined as
\[
c\left(b_1\otimes\cdots\otimes b_k\right)=\sum_{r=1}^kc(b_{r}),
\]
where $b_r:=\proj_r^*(x_{1}^{a_{r,1}}x_{2}^{\varepsilon_{r,2}}\cdots x_{g}^{\varepsilon_{r,g}})=x_{r,1}^{a_{r,1}}x_{r,2}^{\varepsilon_{r,2}}\cdots x_{r,g}^{\varepsilon_{r,g}}\in H^*(SP^n(N_g))^{\otimes k}$ for $1\le r\le k$.
\end{definition}

\begin{remark}\label{motivaciondelacapacidad}
For $a\geq0$ and $\varepsilon_2,\ldots,\varepsilon_g\in\{0,1\}$, Remark~\ref{rem: ks non-zero} and Theorem~\ref{thm: additive basis} imply that the monomial $\mu:=x_1^ax_2^{\varepsilon_2}\cdots x_g^{\varepsilon_g}\in H^*(SP^n(N_g))$ is either a basis element or trivial, depending on whether $c(\mu)\geq0$ or $c(\mu)=-\infty$. In particular, for each $j\in\{1,\ldots,g\}$, $$x_1^ax_2^{\varepsilon_2}\cdots x_j^{\varepsilon_j}\cdot x_{j+1}^{\varepsilon_{j+1}} \cdots x_g^{\varepsilon_g}=0 \iff c(x_1^ax_2^{\varepsilon_2}\cdots x_j^{\varepsilon_j})<\varepsilon_{j+1}+\cdots+\varepsilon_g.$$ In other words, $c(x_1^ax_2^{\varepsilon_2}\cdots x_j^{\varepsilon_j})$ is the maximal number of distinct elements in $\{x_{j+1},\ldots,x_g\}$ whose product with $x_1^ax_2^{\varepsilon_2}\cdots x_j^{\varepsilon_j}$ is non-zero. 
\end{remark}

The following result, a straightforward Cartesian-wise extension of the previous fact, will be crucial for the computations in the final section of the paper.

\begin{lemma}\label{lem: tool for capacity}
Let $g'>g$ and pick a basis element $b_1\otimes \cdots\otimes b_k\in H^*(SP^n(N_g))^{\otimes k}$ as in Definition~\ref{def: capacity}. In terms of the ring map $H^*(SP^n(N_g))^{\otimes k}\hookrightarrow H^*(SP^n(N_{g'}))^{\otimes k}$ in Remark~\ref{rem: relocatedcontent}, we have
\[
c\left(b_1\otimes\cdots\otimes b_k\cdot\prod_{t=1}^jx_{e_t,i_t}\right)\le c\left(b_1\otimes\cdots\otimes b_k\right)-j
\]
for any integers $e_t\in\{1,\ldots,k\}$ and $i_t\in\{g+1,\ldots,g'\}$ satisfying $i_t\ne i_s$ whenever $t\ne s$ and $e_t=e_s$. In particular, $b_1\otimes\cdots\otimes b_k \cdot   \prod_{t=1}^jx_{e_t,i_t}$ vanishes whenever $c(b_1\otimes\cdots\otimes b_k)<j$.
\end{lemma}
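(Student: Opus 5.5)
The argument reduces to a single tensor factor, because both the capacity and the multiplication by the new generators work factor by factor. First I would reorganize the product $\prod_{t=1}^j x_{e_t,i_t}$ by grouping the factors according to their Cartesian coordinate $e_t$. For $1\le r\le k$, let $J_r=\{t : e_t=r\}$, so that $\sum_r |J_r| = j$. The hypothesis says that the indices $i_t$ with $t\in J_r$ are pairwise distinct and all lie in $\{g+1,\ldots,g'\}$. Since $x_{r,i}=\proj_r^*(x_i)$, the product in question equals
\[
\iota(b_1)\Bigl(\prod_{t\in J_1}x_{1,i_t}\Bigr)\otimes\cdots\otimes \iota(b_k)\Bigl(\prod_{t\in J_k}x_{k,i_t}\Bigr),
\]
up to the identification $x_{r,i}\leftrightarrow x_i$ in the $r$-th factor. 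In particular the product is a tensor of monomials in $H^*(SP^n(N_{g'}))$, one in each coordinate.

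Next I would handle each coordinate separately. Write $b_r = x_1^{a}x_2^{\varepsilon_2}\cdots x_g^{\varepsilon_g}$, which is a basis element in the sense of Theorem~\ref{thm: additive basis} for genus $g$. By Remark~\ref{rem: relocatedcontent}, $\iota(b_r)$ is the same monomial, now a basis element for genus $g'$ with $\varepsilon_{g+1}=\cdots=\varepsilon_{g'}=0$, and its capacity does not change. Multiplying by $\prod_{t\in J_r}x_{i_t}$, where the $i_t$ are distinct and lie in $\{g+1,\ldots,g'\}$, simply turns on $|J_r|$ of the exponents $\varepsilon_i$ that were zero. By Remark~\ref{motivaciondelacapacidad}, the resulting monomial $\mu_r$ is either a basis element with
\[
c(\mu_r)=n-\Bigl(\bigl\lfloor\tfrac{a+1}{2}\bigr\rfloor+\sum_{i\le g}\varepsilon_i+|J_r|\Bigr)=c(b_r)-|J_r|,
\]
or it vanishes, in which case $c(\mu_r)=-\infty$. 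In both cases $c(\mu_r)\le c(b_r)-|J_r|$.

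Finally I would put the coordinates back together. If some $\mu_r$ vanishes, the whole tensor is zero, so its capacity is $-\infty$ (with the convention $c(0)=-\infty$ extended to the tensor product) and the inequality is trivial. Otherwise the product is the tensor basis element $\mu_1\otimes\cdots\otimes\mu_k$, and summing the one-coordinate identities gives
\[
c(\mu_1\otimes\cdots\otimes\mu_k)=\sum_r c(b_r)-\sum_r|J_r| = c(b_1\otimes\cdots\otimes b_k)-j.
\]
For the ``in particular'' clause: if $c(b_1\otimes\cdots\otimes b_k)<j$, then the product cannot be a basis element, since basis elements have capacity $\ge 0$. Hence some $\mu_r$ must vanish, and so the whole product is zero.

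The only point requiring care is the bookkeeping in the first step. One has to check that the hypothesis ($i_t\ne i_s$ whenever $e_t=e_s$) is exactly what prevents squares of the new generators $x_i$ in a single coordinate. Such squares would become $y$ and change the exponent $a$, so the capacity would drop by only $1$ for two factors. That case is excluded here, and everything else is a direct application of Remark~\ref{motivaciondelacapacidad}.
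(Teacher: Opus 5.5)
Your proof is correct and follows the approach the paper intends: the paper gives no separate proof, calling the lemma a straightforward Cartesian-wise extension of Remark~\ref{motivaciondelacapacidad}, and you carry this out by grouping the new generators by coordinate and applying that remark in each tensor factor. You also correctly identify the role of the distinctness hypothesis: it rules out squares $x_i^2=y$ in a single coordinate, which would lower the capacity by only one for two factors.
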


We close the section with a couple of additional properties of the capacity function that will be useful later in the paper.

\begin{proposition}\label{prop: prop y from notes}
Let $p_{r}\in\{0,\ldots, 2n\}$ for $1\le r \le k$. If $\sum_{r=1}^kp_r\le A$, then
\[
c\left(x_{1,1}^{2n-p_1}\otimes\cdots\otimes x_{k,1}^{2n-p_k}\right)\le\left\lfloor\frac{A}{2}\right\rfloor.
\]
\end{proposition}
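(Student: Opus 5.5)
The plan is to compute the capacity directly from Definition~\ref{def: capacity}, one factor at a time, and then sum. For the $r$-th factor the basis element is $x_1^{a}$ with $a=2n-p_r$ and with all $\varepsilon_i=0$. Its capacity is therefore $n-\lfloor (2n-p_r+1)/2\rfloor$, provided this is nonnegative. That holds automatically, since $0\le p_r\le 2n$ gives $\lfloor(2n-p_r+1)/2\rfloor\le n$. If some factor were zero, the capacity would be $-\infty$ and the inequality would be trivial, so we may assume every factor is a genuine basis element.

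Next I would simplify the single-factor capacity. Write $\lfloor (2n-p_r+1)/2\rfloor = n+\lfloor (1-p_r)/2\rfloor = n-\lceil (p_r-1)/2\rceil$. Hence
\[
c\left(x_1^{2n-p_r}\right)=\left\lceil \frac{p_r-1}{2}\right\rceil=\left\lfloor \frac{p_r}{2}\right\rfloor .
\]
A check on both parities confirms this: if $p_r=2m$ the value is $m$, and if $p_r=2m+1$ the value is again $m$.

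Summing over $r$, the total capacity is $\sum_{r=1}^k\lfloor p_r/2\rfloor$. This is at most $\lfloor \sum_r p_r/2\rfloor$: the left side is an integer bounded by the real number $\sum_r p_r/2$, so it is also bounded by the floor of that number. Monotonicity of $\lfloor\cdot/2\rfloor$ together with $\sum_r p_r\le A$ then gives the bound $\lfloor A/2\rfloor$.

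The only step needing care is the floor/ceiling identity for the single factor, including the edge case $p_r=0$, where the factor is $x_1^{2n}=y^n$ with capacity $0$. Everything else is a direct application of the definition and the superadditivity of the floor function.
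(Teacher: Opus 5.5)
Your proposal is correct and matches the paper's proof: compute each factor's capacity as $n-\lfloor(2n-p_r+1)/2\rfloor=\lfloor p_r/2\rfloor$, then chain $\sum_r\lfloor p_r/2\rfloor\le\lfloor\sum_r p_r/2\rfloor\le\lfloor A/2\rfloor$. Your extra checks, that each factor is a genuine basis element because $0\le p_r\le 2n$ and the parity verification of the floor identity, are details the paper leaves implicit.
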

\begin{proof}
By definition, $c(x_{r,1}^{2n-p_r})=n-\left\lfloor\tfrac{2n-p_r+1}{2}\right\rfloor=\left\lfloor\tfrac{p_r}{2}\right\rfloor$. Therefore,
\[
c\left(x_{1,1}^{2n-p_1}\otimes\cdots\otimes x_{k,1}^{2n-p_k}\right)=\sum_{r=1}^k \left\lfloor\frac{p_r}{2}\right\rfloor\le\left\lfloor\frac{\sum_{r=1}^kp_r}{2}\right\rfloor\le\left\lfloor\frac{A}{2}\right\rfloor.
\]
\end{proof}

\begin{proposition}\label{prop: prop z from notes}
Let $b_1\otimes\cdots\otimes b_k\in H^*(SP^n(N_g))^{\otimes k}\hookrightarrow H^*(SP^n(N_{g+1}))^{\otimes k}$ be a basis element. Then for any basis element $\alpha$ in the expansion of a product
\[
p=b_1\otimes\cdots\otimes b_k\cdot\prod_{r=2}^k\left(x_{1,g+1}+x_{r,g+1}\right)^{\varepsilon_r},
\]
we have the inequality
\[
c(\alpha)\le c\left(b_1\otimes\cdots\otimes b_k\right)-\left\lfloor\frac{1+\sum_{r=2}^k\varepsilon_r}{2}\right\rfloor.
\]
In particular, the product $p$ vanishes if $c(b_1\otimes\cdots\otimes b_k)<\left\lfloor\tfrac{1+\sum_{r=2}^k\varepsilon_r}{2}\right\rfloor$.
\end{proposition}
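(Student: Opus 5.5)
We expand the product $\prod_{r=2}^k(x_{1,g+1}+x_{r,g+1})^{\varepsilon_r}$ and multiply term by term into $b_1\otimes\cdots\otimes b_k$. Each term in this expansion is determined by a subset $T\subseteq R:=\{r\ge 2:\varepsilon_r=1\}$: for $r\in T$ we pick $x_{r,g+1}$, and for $r\in R\setminus T$ we pick $x_{1,g+1}$. Put $m:=|R|=\sum_{r=2}^k\varepsilon_r$. The term attached to $T$ is then
\[
b_1\otimes\cdots\otimes b_k\cdot x_{1,g+1}^{\,m-|T|}\prod_{r\in T}x_{r,g+1}.
\]
Since $x_{g+1}^2=y=x_1^2$ in $H^*(SP^n(N_{g+1}))$, we rewrite the power $x_{1,g+1}^{m-|T|}$ as $x_{1,1}^{2q}x_{1,g+1}^{s}$, where $m-|T|=2q+s$ and $s\in\{0,1\}$. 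After this rewriting the term is either zero or a single basis element $\alpha$, which is why every basis element in the expansion of $p$ arises from some $T$.

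We now track capacities factor by factor. For $r\in T$, multiplying $b_r$ by a new distinct variable $x_{r,g+1}$ lowers its capacity by $1$; this is Remark~\ref{motivaciondelacapacidad}, equivalently Lemma~\ref{lem: tool for capacity}. In the first factor, multiplying by $x_{1,1}^{2q}$ raises the exponent $a$ of $x_1$ by $2q$. This lowers $\lfloor (a+1)/2\rfloor$-contribution by exactly $q$, so the capacity drops by $q$. Multiplying further by $x_{1,g+1}^{s}$ lowers the capacity by $s$ more, and the result is zero if the capacity goes negative.

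Adding these up, the capacity of the term attached to $T$ is
\[
c(\alpha)\le c(b_1\otimes\cdots\otimes b_k)-|T|-q-s,
\]
and the term vanishes whenever the right-hand side is negative. Writing $t=|T|$, we have
\[
t+q+s=t+\left\lceil\frac{m-t}{2}\right\rceil\ \ge\ \left\lceil\frac{m}{2}\right\rceil=\left\lfloor\frac{m+1}{2}\right\rfloor,
\]
which gives the claimed inequality. The inequality holds because $t+\lceil (m-t)/2\rceil=\lceil (m+t)/2\rceil$ and this is increasing in $t\ge 0$.

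For the final assertion, suppose $c(b_1\otimes\cdots\otimes b_k)<\lfloor (1+m)/2\rfloor$. Then every term in the expansion has negative capacity bound, and hence is zero; basis elements have capacity $\ge 0$ and $c(0)=-\infty$. Therefore $p=0$.

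The only delicate point is the first factor, where $x_{1,g+1}$ is used both through its square, which converts to $y$, and possibly once more as a distinct variable. We must check that no cancellation or overcounting occurs. The terms for different $T$ may coincide and cancel mod $2$, but this only removes basis elements, so the bound on the ones that survive is unaffected.
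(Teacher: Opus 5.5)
Your argument is correct and is essentially the paper's: you rewrite $x_{1,g+1}^{2q}$ as $x_{1,1}^{2q}$ using $x_{g+1}^2=y=x_1^2$, which costs $q$ (plus $s$) in capacity, and charge one unit for each new variable placed in a factor $r\ge 2$. The only difference is that you make explicit, via $t+\lceil (m-t)/2\rceil=\lceil (m+t)/2\rceil\ge\lceil m/2\rceil$, the comparison over all subsets $T$; the paper compresses this into the assertion (citing Lemma~\ref{lem: tool for capacity}) that the $T=\varnothing$ summand has the largest capacity.
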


\begin{proof}
For convenience, write $\sum_{r=2}^k\varepsilon_r=2q+\varepsilon$, where $\varepsilon\in\{0,1\}$. By Lemma~\ref{lem: tool for capacity}, the summand in the expansion of $p$ with the largest capacity is $b_1\otimes\cdots\otimes b_k\cdot x_{1,g+1}^{2q+\varepsilon}$, which is the same as $(b_1\ x_{1,1}^{2q}\ x_{1,g+1}^{\varepsilon})\otimes b_2\otimes\cdots\otimes b_k$. We have for its capacity that
\begin{align*}
c\left((b_1 \ x_{1,1}^{2q}\ x_{1,g+1}^{\varepsilon})\otimes b_2\otimes\cdots\otimes b_k\right)
&
\le c\left(b_1\otimes\cdots\otimes b_k\right)-(q+\varepsilon)
\\
&
=c\left(b_1\otimes\cdots\otimes b_k\right)-\left\lfloor\frac{1+2q+\varepsilon}{2}\right\rfloor.
\end{align*}
\end{proof}

\section{$k$-th zero-divisors}\label{sec: zero-divisor}

Let $n,g\ge 1$ and $k\ge 2$ be integers. Let $\Delta_{k,n,g}\colon SP^n(N_g)\to (SP^n(N_g))^k$ be the diagonal map. We now study the kernel of the $k$-fold iterated cup product multiplication, i.e., the kernel of the homomorphism $$\Delta^*_{k,n,g}\colon H^*(SP^n(N_g))^{\otimes k}\cong H^*((SP^n(N_g))^k)\to H^*(SP^n(N_g))$$ induced by $\Delta_{k,n,g}$ in $\Z_2$-cohomology. The subring $\Ker(\Delta_{k,n,g}^*)$ is the ideal of the $k$-th zero-divisors of $SP^n(N_g)$. For instance, in terms of the notation in~\eqref{defintionofxri}, elements of the form $x_{r,i}+x_{s,i}$ are $k$-th zero divisors.

The case $g=1$ of the following result was shown in~\cite{CAG+}.

\begin{proposition}\label{prop: ideal description}
For $k\ge 2$ and $n,g\ge 1$, the $k$-th zero-divisors $x_{1,i}+x_{r,i}$, where $2\le r\le k$ and $1\le i \le g$, generate the ideal $\Ker(\Delta^*_{k,n,g})$ of $k$-th zero-divisors.
\end{proposition}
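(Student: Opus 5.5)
Let $J$ denote the ideal of $H^*(SP^n(N_g))^{\otimes k}$ generated by the classes $z_{r,i}:=x_{1,i}+x_{r,i}$ for $2\le r\le k$ and $1\le i\le g$. The plan is a standard ``change of variables'' argument. Each $z_{r,i}$ lies in $\Ker(\Delta^*_{k,n,g})$, because $\Delta^*_{k,n,g}(x_{r,i})=x_i$ for every $r$ and we work mod $2$. Hence $J\subseteq\Ker(\Delta^*_{k,n,g})$, and it only remains to prove the reverse inclusion.

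For the reverse inclusion, consider the quotient ring $Q:=H^*(SP^n(N_g))^{\otimes k}/J$. The tensor algebra is generated as a ring by the classes $x_{r,i}$; note that $y$ in the $r$-th factor equals $x_{r,1}^2$ by Theorem~\ref{thm: ks description}(1). In $Q$ we have $x_{r,i}\equiv x_{1,i}$ for every $r$. Consequently, the ring map $\phi\colon H^*(SP^n(N_g))\to Q$ given by inclusion into the first tensor factor is surjective. Indeed, every monomial in the $x_{r,i}$ is congruent modulo $J$ to the monomial obtained by replacing each $x_{r,i}$ with $x_{1,i}$, and that monomial lies in the image of the first factor.

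Next, $\Delta^*_{k,n,g}$ vanishes on $J$, so it factors through a map $\bar\Delta\colon Q\to H^*(SP^n(N_g))$. The composite $\bar\Delta\circ\phi$ is $\Delta^*_{k,n,g}\circ\proj_1^*=(\proj_1\circ\Delta_{k,n,g})^*=\mathrm{id}$. Thus $\phi$ is injective as well as surjective, hence an isomorphism, and $\bar\Delta=\phi^{-1}$ is an isomorphism too. Injectivity of $\bar\Delta$ says exactly that $\Ker(\Delta^*_{k,n,g})=J$.

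The only point needing care is the surjectivity of $\phi$. This requires that $H^*(SP^n(N_g))^{\otimes k}$ be generated as a ring by the degree-one classes $x_{r,i}$. That holds because Theorem~\ref{thm: ks description} writes $y=x_1^2$ in each factor, and the tensor product is generated by the images of the factors. Everything else is formal, so I expect no real obstacle. In particular, the argument never uses the truncation relations (2) of Theorem~\ref{thm: ks description}, which explains why the case $g=1$ treated in~\cite{CAG+} generalizes verbatim.
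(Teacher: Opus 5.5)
Your proof is correct and is essentially the paper's argument. The paper also passes to the quotient by $J_{k,n,g}$ and observes that $q\circ\proj_1^*$ is injective, because its composite with the induced map $j$ is $(\proj_1\circ\Delta_{k,n,g})^*=\mathrm{id}$. It gets surjectivity from the congruence $x_{r,i}\equiv x_{1,i}$ modulo $J_{k,n,g}$, and concludes that $j$ is an isomorphism, so $J_{k,n,g}=\Ker(\Delta^*_{k,n,g})$.
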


In preparation for the proof, we first define the ideal
\[
J_{k,n,g}:=\left\langle x_{1,i}+x_{r,i}\ \middle|\ 2\le r\le k \ \text{and}\ 1\le i \le g\right\rangle
\]
generated by the specific zero-divisors of $SP^n(N_g)$ mentioned in Proposition~\ref{prop: ideal description}. It is clear that $J_{k,n,g}$ is a sub-ideal of $\Ker(\Delta_{k,n,g}^*)$. Additionally, for $t\ge 0$ and $\varepsilon_{2},\ldots,\varepsilon_g\in \{0,1\}$, we set
\[
m\left(t,\varepsilon_{2},\ldots, \varepsilon_{g}\right):= x_{1}^{t} \ x_{2}^{\varepsilon_2} \ \cdots  \ x_{g}^{\varepsilon_{g}}\in H^*(SP^n(N_g)),
\]
and for $1\le r\le k$,
\[
m_r(t,\varepsilon_{2},\ldots, \varepsilon_{g}):= \proj_r^*(m\left(t,\varepsilon_{2},\ldots, \varepsilon_{g}\right))=x_{r,1}^{t} \ x_{r,2}^{\varepsilon_2} \ \cdots  \ x_{r,g}^{\varepsilon_{g}}\in H^*(SP^n(N_g))^{\otimes k},
\]
where $\proj_r$ is as in~\eqref{defintionofxri}.

\begin{proof}[Proof of Proposition~\ref{prop: ideal description}]
Consider the commutative diagram
\[
\begin{tikzcd}
H^*(SP^n(N_g)) \arrow{r}{\proj_1^*} \arrow{dr}{i}
&
H^*(SP^k(N_g))^{\otimes k}   \arrow{r}{\Delta^*_{k,n,g}}  \arrow{d}{q}
&
H^*(SP^n(N_g))
\\
&
H^*(SP^k(N_g))^{\otimes k}/J_{k,n,g}, \arrow{ur}{j}
&
\end{tikzcd}
\]
where $q$ is the canonical projection and $j$ is determined by $\Delta_{k,n,g}^*$ on the quotient since $J_{k,n,g}\subset\Ker(\Delta_{k,n,g}^*)$. The composition on the top is an isomorphism since $\text{proj}_1 \circ \Delta_{k,n,g}$ is the identity on $SP^n(N_g)$. We prove that $i=q\circ \proj_1^*$ is an isomorphism, which forces the corresponding property for $j$ and, thus, the desired equality $J_{k,n,g}=\Ker(\Delta_{k,n,g}^*)$.

Note that $i$ is a monomorphism since $j\circ i$ is so. On the other hand, the congruence $x_{r,i}\equiv x_{1,i}\text{ mod }(J_{k,n,g})$, valid for $1\le r\le k$ and $1\le i \le g$, shows
\[
\prod_{r=1}^k
m_r\left( t_r,\varepsilon_{r,2},\ldots,\varepsilon_{r,g}\right) \equiv m_1\left(\sum_{r=1}^k t_r,\sum_{r=1}^k \varepsilon_{r,2},\ldots,\sum_{r=1}^k \varepsilon_{r,g}\right) \text{ mod}\left(J_{k,n,g}\right).
\]
The element on the left-hand side is a typical generator of the domain of $q$, while the image of the element under $q$ on the right-hand side belongs to $\Im(i)$. It follows that $i$ is an epimorphism.
\end{proof}

Because $\zcl_k(SP^n(N_g))$ is the length of the longest non-trivial cup product in $\Ker(\Delta^*_{k,n,g})$, Proposition~\ref{prop: ideal description} yields the following characterization of $\zcl_k(SP^n(N_g))$.

\begin{corollary}\label{cor: zcl simplified}
For $k\ge 2$ and $n,g\ge 1$, we have that
\[
\zcl_k(SP^n(N_g))=\max\left\{\sum_{\substack{2\le r\le k \\ 1\le i \le g}}a_{r,i} \ \middle| \ 0\ne\prod_{\substack{2\le r\le k \\ 1\le i \le g}}\left(x_{1,i}+x_{r,i}\right)^{a_{r,i}}\in H^*(SP^n(N_g))^{\otimes k} \right\}.
\]
\end{corollary}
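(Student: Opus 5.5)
The plan is to deduce Corollary~\ref{cor: zcl simplified} directly from Proposition~\ref{prop: ideal description}, using the standard fact that the longest nonzero product in an ideal generated by a set of elements is attained by a product of those generators. The two inequalities are handled separately, and then the exponents are repackaged.

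For the inequality ``$\ge$'', each factor $x_{1,i}+x_{r,i}$ is a $k$-th zero-divisor. Hence any nonzero product $\prod(x_{1,i}+x_{r,i})^{a_{r,i}}$ is a nonzero product of $\sum a_{r,i}$ zero-divisors, so $\zcl_k(SP^n(N_g))\ge\sum a_{r,i}$.

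For the inequality ``$\le$'', suppose $z_1\cdots z_m\ne0$ with each $z_j\in\Ker(\Delta^*_{k,n,g})$. By Proposition~\ref{prop: ideal description}, each $z_j$ can be written as $z_j=\sum_{r,i}c_{j,r,i}\,(x_{1,i}+x_{r,i})$ with $c_{j,r,i}\in H^*(SP^n(N_g))^{\otimes k}$.

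Expanding the product $z_1\cdots z_m$ by distributivity gives a finite sum of terms. Each term is of the form
\[
\Bigl(\prod_j c_{j,r_j,i_j}\Bigr)\prod_j\bigl(x_{1,i_j}+x_{r_j,i_j}\bigr).
\]
Since the ring is graded commutative over $\Z_2$, the order of the factors does not matter. Because the total sum is nonzero, at least one of these terms is nonzero. For that term, the factor $\prod_j(x_{1,i_j}+x_{r_j,i_j})$ must itself be nonzero, since otherwise the whole term would vanish.

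Grouping equal generators, this factor becomes $\prod(x_{1,i}+x_{r,i})^{a_{r,i}}$ with $\sum a_{r,i}=m$. So every nonzero product of $m$ zero-divisors yields an admissible exponent vector with sum $m$, and the maximum on the right-hand side is at least $\zcl_k$. Finiteness of the maximum is automatic, since $H^*$ vanishes above degree $2nk$.

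There is no real obstacle here. The only point needing care is noting that each generator has positive degree, so no $c_{j,r,i}$ needs to be inverted, and that commutativity mod $2$ lets the generator factors be collected freely.
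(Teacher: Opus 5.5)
Your proof is correct and follows the same route as the paper, which deduces the corollary directly from Proposition~\ref{prop: ideal description}. You have simply spelled out the standard expansion argument the paper leaves implicit: any nonzero product of $m$ zero-divisors contains a nonzero term with a nonzero product of $m$ generators $x_{1,i}+x_{r,i}$.
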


For fixed $n$, we now compare the $\zcl_k(SP^n(N_g))$ values as $k$ and $g$ vary.

\begin{corollary}\label{cor: zcl with g changing}
For $k\ge 2$ and $n,g\ge 1$, we have that
\[
\cdots\ge \zcl_k(SP^n(N_{g+1}))\ge \zcl_k(SP^n(N_{g})) \ge \cdots\ge\zcl_k(P^{2n}).
\]
\end{corollary}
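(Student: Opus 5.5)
The plan is to use the characterization of Corollary~\ref{cor: zcl simplified} together with the ring monomorphism of Remark~\ref{rem: relocatedcontent}. Take a nonzero product of zero-divisors realizing $\zcl_k(SP^n(N_g))$:
\[
0\ne\prod_{\substack{2\le r\le k \\ 1\le i \le g}}\left(x_{1,i}+x_{r,i}\right)^{a_{r,i}}\in H^*(SP^n(N_g))^{\otimes k}.
\]
I will push this product forward along the $k$-fold tensor power of $\iota\colon H^*(SP^n(N_g))\to H^*(SP^n(N_{g+1}))$, where $\iota(x_i)=x_i$ for $i\le g$ and $\iota(y)=y$.

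The image is the same formal product
\[
\prod_{r,i\le g}(x_{1,i}+x_{r,i})^{a_{r,i}}\in H^*(SP^n(N_{g+1}))^{\otimes k}.
\]
This is obtained by setting $a_{r,g+1}=0$ in the index set of Corollary~\ref{cor: zcl simplified} for genus $g+1$. Since $\iota^{\otimes k}$ is a tensor product of monomorphisms of $\Z_2$-vector spaces, it is injective, and being a ring map it sends the product to the product. Hence the image is nonzero.

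It follows that $\zcl_k(SP^n(N_{g+1}))\ge\sum a_{r,i}=\zcl_k(SP^n(N_g))$. Iterating this inequality from $g=1$, and using the homeomorphism $SP^n(N_1)\cong P^{2n}$, gives the full chain down to $\zcl_k(P^{2n})$.

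The only point needing care is the injectivity of $\iota^{\otimes k}$. I will justify it by noting that $\iota$ maps the standard basis of Theorem~\ref{thm: additive basis} injectively into the standard basis for genus $g+1$: a monomial with $\varepsilon_{g+1}=0$ satisfies the same constraint $t+\sum\varepsilon_i\le n$. Consequently, the tensor basis is sent injectively into the tensor basis. The relations are also preserved, because $x_i^2=y$ and the vanishing conditions of Theorem~\ref{thm: ks description} for genus $g$ are instances of those for genus $g+1$, so $\iota$ is well defined. I do not expect a real obstacle here; the argument is essentially formal.
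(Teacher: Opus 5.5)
Your proposal is correct and is essentially the paper's proof: both use the generating set of zero-divisors from Proposition~\ref{prop: ideal description} (in your case via Corollary~\ref{cor: zcl simplified}) together with the injective ring map $\iota^{\otimes k}$ of Remark~\ref{rem: relocatedcontent}, which sends each $x_{1,i}+x_{r,i}$ to the corresponding zero-divisor for genus $g+1$, so a maximal nonzero product stays nonzero. Your extra verification that $\iota$ is well defined and injective on the standard basis is exactly the content of that remark.
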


\begin{proof}
In view of Proposition~\ref{prop: ideal description}, this follows from the fact that the ring inclusion $$H^*(SP^n(N_{g}))^{\otimes k}\hookrightarrow H^*(SP^n(N_{g'}))^{\otimes k}$$ constructed for $g'\geq g$ in Remark~\ref{rem: relocatedcontent} maps $x_{1,i}+x_{r,i}\in H^*(SP^n(N_{g}))^{\otimes k}$ into the corresponding $k$-th zero divisor in $H^*(SP^n(N_{g'}))^{\otimes k}$.
\end{proof}

\begin{corollary}\label{cor: zcl with k changing}
For $k\ge 2$ and $n,g \ge 1$, we have that
\[
\cdots\ge \zcl_{k+1}(SP^n(N_{g}))\ge \zcl_k(SP^n(N_{g})) \ge \cdots\ge\zcl_2(SP^{n}(N_g)).
\]
\end{corollary}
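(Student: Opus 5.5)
The plan is to show $\zcl_k(SP^n(N_g))\le\zcl_{k+1}(SP^n(N_g))$ for every $k\ge 2$ by turning any non-zero product of $k$-th zero-divisors into a non-zero product of $(k+1)$-th zero-divisors of the same length. The chain of inequalities then follows by induction on $k$.

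By Corollary~\ref{cor: zcl simplified}, we may fix exponents $a_{r,i}$ with $2\le r\le k$ and $1\le i\le g$ such that
\[
P:=\prod_{r,i}\left(x_{1,i}+x_{r,i}\right)^{a_{r,i}}\ne 0\in H^*(SP^n(N_g))^{\otimes k}
\]
and $\sum a_{r,i}=\zcl_k(SP^n(N_g))$.

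Next, consider the projection $\pi\colon (SP^n(N_g))^{k+1}\to (SP^n(N_g))^k$ onto the first $k$ factors. On cohomology it induces
\[
\pi^*\colon H^*(SP^n(N_g))^{\otimes k}\to H^*(SP^n(N_g))^{\otimes (k+1)},\qquad u\mapsto u\otimes 1.
\]
This map is a ring monomorphism: it sends tensor basis elements to tensor basis elements, and it is injective because $\pi$ has a section (insert the diagonal point in the last factor). It sends $x_{r,i}$ to the class denoted $x_{r,i}$ in the $(k+1)$-fold product, for $r\le k$. Hence
\[
\pi^*(P)=\prod_{r,i}\left(x_{1,i}+x_{r,i}\right)^{a_{r,i}}\in H^*(SP^n(N_g))^{\otimes(k+1)}
\]
is non-zero. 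Each factor $x_{1,i}+x_{r,i}$ with $r\le k$ is one of the generators of $\Ker(\Delta^*_{k+1,n,g})$ listed in Proposition~\ref{prop: ideal description}. Setting $a_{k+1,i}=0$, Corollary~\ref{cor: zcl simplified} applied with $k+1$ then gives $\zcl_{k+1}(SP^n(N_g))\ge\sum a_{r,i}=\zcl_k(SP^n(N_g))$.

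The argument is essentially formal and has no real obstacle. The only point needing care is injectivity of $u\mapsto u\otimes 1$, which is immediate over the field $\Z_2$ since $1\ne 0$ in the last tensor factor. Alternatively, one can note that $\Delta_{k+1}$ composed with $\pi$ equals $\Delta_k$, so $\pi^*$ carries $k$-th zero-divisors to $(k+1)$-th zero-divisors, and then argue directly without using the explicit generators.
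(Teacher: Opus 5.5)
Your proof is correct and is essentially the paper's argument: the paper also pulls back along the projection $(SP^n(N_g))^{k_2}\to(SP^n(N_g))^{k_1}$ that forgets the last factors. It then uses that $\pi^*$ is a monomorphism with $\pi^*(x_{r,i})=x_{r,i}$ and concludes via Proposition~\ref{prop: ideal description}. The only cosmetic difference is that you go one step $k\to k+1$ at a time and induct, rather than treating arbitrary $k_1\le k_2$ at once.
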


\begin{proof}
For $k_1\le k_2$, consider the projection $\pi\colon (SP^n(N_g))^{k_2} \to (SP^n(N_g))^{k_1}$ that forgets the last $k_2-k_1$ Cartesian factors of $(SP^n(N_g))^{k_2}$. This induces a monomorphism $\pi^*\colon H^*(SP^n(N_g))^{\otimes k_1}\to H^*(SP^n(N_g))^{\otimes k_2}$ satisfying $\pi^*(x_{r,i})=x_{r,i}$ for $1\le r\le k_1$ and $1\le i \le g$. Thus, the inequality $\zcl_{k_2}(SP^n(N_{g}))\ge \zcl_{k_1}(SP^n(N_{g}))$ holds in view of Proposition~\ref{prop: ideal description}.
\end{proof}

\begin{ex}\label{maximalidaddeTC}
Corollary~\ref{cor: zcl with g changing} gives $\TC_k(SP^n(N_g))=\zcl_k(SP^n(N_g))=2nk$ for any $g\ge 1$, provided $\zcl_k(P^{2n})=2nk$. In turn, as proved in ~\cite[Proposition~1.4]{Dav}, the latter equality holds for $k\ge 2^{\ell+1}-1$, where $\ell$ is the length of the longest string of consecutive $1$'s in the binary expansion of $n$. Note that $\TC_k(SP^n(N_g))$ can attain its maximal possible value $2nk$ without $\zcl_k(P^{2n})$ doing so, see Corollary~\ref{cor: tc_k}.
\end{ex}

\begin{remark}\label{rem: on different zcls}
The non-decreasing sequence in Corollary~\ref{cor: zcl with g changing} is bounded from above by $2nk$, so it is eventually constant. On the other hand, the sequence in Corollary~\ref{cor: zcl with k changing} is unbounded with an eventually strict linear behavior. Indeed, Example~\ref{maximalidaddeTC} shows that all terms with $k$ large enough in the sequence of Corollary~\ref{cor: zcl with k changing} are maximal possible (i.e., equal to $2nk$).
\end{remark}

The following result reduces the search for tuples $(a_{r,i})$ realizing $\zcl_k(SP^n(N_g))$ in Corollary~\ref{cor: zcl simplified}.

\begin{proposition}\label{prop: zcl search reduced}
For $k\ge 2$ and $n,g\ge 1$, the maximal sum in the set of Corollary~\ref{cor: zcl simplified} is attained with a tuple of exponents $(a_{r,i})$ satisfying the following three properties:
\begin{enumerate}
\item $a_{2,1}\ge a_{3,1}\ge\cdots\ge a_{k,1}$;\label{1}
\item $1\ge a_{2,2}\ge a_{2,3} \ge \cdots\ge a_{2,g}\ge 0$;\label{2}
\item $a_{r,i}\in\{0,1\}$ for all $2\le r\le k$ and $2\le i \le g$.\label{3}
\end{enumerate}
\end{proposition}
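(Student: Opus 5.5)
The plan is to start from an arbitrary tuple $(a_{r,i})$ that realizes the maximum in Corollary~\ref{cor: zcl simplified}. I will modify it in three successive steps. Each step keeps the product $\prod(x_{1,i}+x_{r,i})^{a_{r,i}}$ non-zero, keeps the total $\sum a_{r,i}$ unchanged, and does not undo the properties already achieved. Two steps use ring automorphisms of $H^*(SP^n(N_g))^{\otimes k}$ that permute the generators $x_{1,i}+x_{r,i}$ of $J_{k,n,g}=\Ker(\Delta^*_{k,n,g})$ (Proposition~\ref{prop: ideal description}). The other step is an algebraic rewriting inside the ring.

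\emph{Step 1: property~\eqref{3}.} Fix $2\le r\le k$ and $2\le i\le g$. Since we work over $\Z_2$ and $x_j^2=y$ for every $j$ by Theorem~\ref{thm: ks description}, we have
\[
(x_{1,i}+x_{r,i})^2=x_{1,i}^2+x_{r,i}^2=y_1+y_r=x_{1,1}^2+x_{r,1}^2=(x_{1,1}+x_{r,1})^2,
\]
where $y_s=\proj_s^*(y)$. Write $a_{r,i}=2q+\epsilon$ with $\epsilon\in\{0,1\}$. Then I replace $a_{r,i}$ by $\epsilon$ and $a_{r,1}$ by $a_{r,1}+2q$. This leaves the product literally unchanged and preserves the sum. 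Doing this for all pairs $(r,i)$ with $i\ge2$ yields~\eqref{3}.

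\emph{Step 2: property~\eqref{1}.} Any permutation $\sigma$ of $\{2,\ldots,k\}$ gives a self-homeomorphism of $(SP^n(N_g))^k$ that permutes the last $k-1$ factors and fixes the first. The induced ring automorphism sends $x_{r,i}\mapsto x_{\sigma(r),i}$ and fixes each $x_{1,i}$. Hence it carries $\prod(x_{1,i}+x_{r,i})^{a_{r,i}}$ to the product with exponents $a_{\sigma^{-1}(r),i}$, and since it is an automorphism this product is still non-zero. I choose $\sigma$ so that the column $(a_{r,1})_r$ becomes non-increasing. This permutes whole rows, so the entries with $i\ge 2$ remain in $\{0,1\}$ and~\eqref{3} survives.

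\emph{Step 3: property~\eqref{2}.} The relations in Theorem~\ref{thm: ks description} are symmetric under any permutation of $x_1,\ldots,x_g$. So any permutation $\tau$ of $\{2,\ldots,g\}$ defines a ring automorphism of $H^*(SP^n(N_g))$ sending $x_i\mapsto x_{\tau(i)}$ and fixing $x_1$ and $y$. Its $k$-fold tensor power permutes the factors $x_{1,i}+x_{r,i}$ in the second index only. I choose $\tau$ so that the row $(a_{2,i})_{i\ge2}$, which consists of $0$'s and $1$'s by Step~1, is non-increasing. This gives~\eqref{2}. Because the column $i=1$ is untouched,~\eqref{1} is preserved, and~\eqref{3} is preserved because only columns are permuted.

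I expect the only point needing care to be the well-definedness of the automorphism in Step~3. This requires that the ideal $I_{n,g}$ be stable under permuting $x_2,\ldots,x_g$. That holds because relation~(1) of Theorem~\ref{thm: ks description} holds for every $i$, and relation~(2) ranges over all sets of distinct indices. Everything else is bookkeeping.
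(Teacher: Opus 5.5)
Your proof is correct and follows essentially the same route as the paper. The paper also first uses $(x_{1,i}+x_{r,i})^2=(x_{1,1}+x_{r,1})^2$ to move the even part of each $a_{r,i}$ with $i\ge2$ into the column $i=1$, then sorts $(a_{r,1})_r$ by a permutation of the Cartesian factors fixing the first, and finally sorts the row $(a_{2,i})_{i\ge2}$ by permuting $x_2,\ldots,x_g$, justified, as in your argument, by the symmetry of the Kallel--Salvatore relations.
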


\begin{proof}
Assume $\zcl_k(SP^n(N_g))=\sum a_{r,i}$, as in Corollary~\ref{cor: zcl simplified}. Say $a_{r,i}=2\alpha_{r,i}+\varepsilon_{r,i}$ with $\varepsilon_{r,i}\in \{0,1\}$ and $\alpha_{r,i}\ge 0$ for $2\le r\le k$ and $2\le i \le g$. The relation $x_{r,i}^2=x_{r,1}^2$ gives
\begin{align*}\left(x_{1,i}+x_{r,i}\right)^{a_{r,i}}&= \left(x_{1,i}+x_{r,i}\right)^{2\alpha_{r,i}} \left(x_{1,i}+x_{r,i}\right)^{\varepsilon_{r,i}} \\ & = \left(x_{1,1}+x_{r,1}\right)^{2\alpha_{r,i}} \left(x_{1,i}+x_{r,i}\right)^{\varepsilon_{r,i} }.
\end{align*}
The non-zero product corresponding to the tuple $(a_{r,i})$ then becomes
\begin{align*}
\prod_{\substack{2\le r\le k \\ 1\le i \le g}}\left(x_{1,i}+x_{r,i}\right)^{a_{r,i}}
& =\prod_{r=2}^k\left(x_{1,1}+x_{r,1}\right)^{a_{r,1}} \prod_{\substack{2\le r\le k \\ 2\le i \le g}}\left(x_{1,1}+x_{r,1}\right)^{2\alpha_{r,i}} \prod_{\substack{2\le r\le k \\ 2\le i \le g}}\left(x_{1,i}+x_{r,i}\right)^{\varepsilon_{r,i}}
\\
& =\prod_{r=2}^k\left(x_{1,1}+x_{r,1}\right)^{a_{r,1}+2\sum_{i=2}^g\alpha_{r,i}} \prod_{\substack{2\le r\le k \\ 2\le i \le g}}\left(x_{1,i}+x_{r,i}\right)^{\varepsilon_{r,i}}
\\
& =:\prod_{\substack{2\le r\le k \\ 1\le i \le g}}\left(x_{1,i}+x_{r,i}\right)^{b_{r,i}}.
\end{align*}
The new set of exponents $(b_{r,i})$ satisfies Property~\eqref{3} and still realizes $\zcl_k(SP^n(N_g))$, since $\sum a_{r,i}=\sum b_{r,i}$ by construction. Next, a permutation $\sigma\in\Sigma_k$ produces an automorphism on $(SP^n(N_g))^k$ that permutes the Cartesian factors. If $\sigma$ fixes $1$, then the corresponding automorphism $\sigma^*$ on $H^*(SP^n(N_g))^{\otimes k}$ satisfies
\[
0\ne \prod_{\substack{2\le r\le k \\ 1\le i \le g}}\left(x_{1,i}+x_{r,i}\right)^{b_{r,i}} \xmapsto{\sigma^*}  \prod_{\substack{2\le r\le k \\ 1\le i \le g}}\left(x_{1,i}+x_{\sigma(r),i}\right)^{b_{r,i}}= \prod_{\substack{2\le r\le k \\ 1\le i \le g}}\left(x_{1,i}+x_{r,i}\right)^{b_{\sigma^{-1}(r),i}} .
\]
Thus, the tuple of exponents $(c_{r,i}:=b_{\sigma^{-1}(r),i})$ realizes $\zcl_k(SP^n(N_g))$ and satisfies both Properties~\eqref{1} and~\eqref{3} for suitably chosen $\sigma\in\Sigma_k$.

Lastly, any permutation $\rho\in\Sigma_g$ induces an automorphism $\rho^*$ on $\Z_2[x_1,\ldots,x_g,y]$ by permuting the classes $x_i$. This descends to an automorphism, still denoted by $\rho^*$, on $H^*(SP^n(N_g))\cong\Z_2[x_1,\ldots,x_g,y]/I_{n,g}$, where $I_{n,g}$ is the ideal of relations described in Theorem~\ref{thm: ks description}. Choosing $\rho$ so that $c_{2,\rho^{-1}(2)}\ge\cdots\ge c_{2,\rho^{-1}(g)}$ and $\rho(1)=1$, we get 
\[
\prod_{\substack{2\le r\le k \\ 1\le i \le g}}\left(x_{1,i}+x_{r,i}\right)^{c_{r,i}} \xmapsto{(\rho^*)^{\otimes k}} \prod_{\substack{2\le r\le k \\ 1\le i \le g}}\left(x_{1,\rho(i)}+x_{r,\rho(i)}\right)^{c_{r,i}}=\prod_{\substack{2\le r\le k \\ 1\le i \le g}}\left(x_{1,i}+x_{r,i}\right)^{c_{r,\rho^{-1}(i)}}.
\]
So, the tuple of exponents $(d_{r,i}:=c_{r,\rho^{-1}(i)})$ attains $\zcl_k(SP^n(N_g))$ and satisfies Properties~\eqref{1},~\eqref{2}, and~\eqref{3}.
\end{proof}

\section{The case of $k=2$}\label{sec: k=2}

In this section, we prove Theorem~\ref{thm: zcl k=2}.
Recall that $2^e\le n<2^{e+1}$ for some $e\ge 0$. For convenience, let us write $n=2^e+d$, where $0\le d<2^e$. We divide the proof into two cases.

\begin{proof}[\textbf{Case (1): $g\le 2d+1$}]
By Proposition~\ref{prop: zcl search reduced}, the largest possible product of zero-divisors that is potentially non-zero is
\begin{equation}\label{eq: monomial 1 for zcl_2}
\left(x_{1,1}+x_{2,1}\right)^{2^{e+2}-1}\ \prod_{i=2}^g \left(x_{1,i}+x_{2,i}\right).
\end{equation}
We show~\eqref{eq: monomial 1 for zcl_2} is indeed non-zero, which yields the assertion. Note that, although all binomial coefficients $2^{e+2}-1 \choose j$ with $0\leq j\leq 2^{e+2}-1$ are odd (by Lucas's theorem), Theorems~\ref{thm: ks description} and~\ref{thm: additive basis} imply that the expansion of~\eqref{eq: monomial 1 for zcl_2} as a sum of basis elements contains the monomial 
\[
\left(x_{1,1}^{2^{e+2}-1-2n}\otimes x_{2,1}^{2n} \,\right)\cdot \prod_{i=2}^g x_{1,i}
=\left(x_{1,1}^{2^{e+1}-2d-1}\ \prod_{i=2}^g x_{1,i}\right)\otimes x_{2,1}^{2n} \ ,  
\]
which is indeed a basis element in view of Theorem~\ref{thm: additive basis} and the current hypothesis $g\leq 2d+1$.
\end{proof}

\begin{proof}[\textbf{Case (2): $g> 2d+1$}]
We have established the non-vanishing of the cup product
\[
\left(x_{1,1}+x_{2,1}\right)^{2^{e+2}-1}\ \prod_{i=2}^{2d+1} \left(x_{1,i}+x_{2,i}\right), \]
so that $\zcl_2(SP^n(N_g))\ge 2^{e+2}+2d-1$. Equality holds here in view of Proposition~\ref{prop: zcl search reduced}~\eqref{2} and the fact that
\[
\left(x_{1,1}+x_{2,1}\right)^{2^{e+2}-1-\ell}\ \prod_{i=2}^{2d+\ell+2} \left(x_{1,i}+x_{2,i}\right)   
\]
vanishes for all $0\le\ell\le\min\{2^{e+2}-1,g-2d-2\}$. Indeed, a potential basis summand in the expansion of such a product would have the form
\[
    x_{1,1}^a\otimes  x_{2,1}^b \ \cdot\ \prod_{i\in C} x_{1,i}\otimes \prod_{j\in D}x_{2,j}= \left(x_{1,1}^a\prod_{i\in C} x_{1,i}\right) \otimes \left(x_{2,1}^b \prod_{j\in D}x_{2,j}\right),
\]
where $a+b=2^{e+2}-1-\ell$ and $C\cup D=\{2,\ldots,2d+\ell+2\}$ with $C\cap D=\varnothing$. But then
\[
\max\left\{\left\lfloor \frac{a+1}{2} \right\rfloor+|C|, \left\lfloor \frac{b+1}{2} \right\rfloor+|D| \right\} \le n,
\] 
so that
\begin{equation}\label{eq: submonomial 4}
\left\lfloor \frac{a+1}{2} \right\rfloor + \left\lfloor \frac{b+1}{2} \right\rfloor + 2d+\ell+1\le 2^{e+1}+2d,    
\end{equation}
which is impossible. This is because if $\ell$ is even, then $a$ and $b$ would have opposite parities and so
\[
\left\lfloor \frac{a+1}{2} \right\rfloor + \left\lfloor \frac{b+1}{2} \right\rfloor +\ell+1=\left\lfloor \frac{a+b+2}{2} \right\rfloor +\ell+1 = 2^{e+1}+\frac{\ell}{2}+1,
\]
which contradicts~\eqref{eq: submonomial 4}. Likewise, if $\ell$ is odd, then $a$ and $b$ would have the same parity, so that
\[
\left\lfloor \frac{a+1}{2} \right\rfloor + \left\lfloor \frac{b+1}{2} \right\rfloor +\ell+1\ge \left\lfloor \frac{a+b+2}{2} \right\rfloor -1 + \ell +1= 2^{e+1}+\frac{\ell+1}{2},
\]
which again contradicts~\eqref{eq: submonomial 4}.
\end{proof}

\begin{remark}\label{rem: bad zero divisors}
    The choice of zero-divisors of $SP^n(N_g)$ matters while forming long lengths of non-trivial cup products in $H^*(SP^n(N_g))^{\otimes 2}$. Indeed, it was explained in~\cite[Section 6.C]{Ja} that $\prod_{i=1}^j(x_{1,i}+x_{2,i})^2=0$ for $j\le\min\{g,n\}$. 
\end{remark}

\section{The case of odd $k\ge 3$}\label{sec: k odd}

In this section, we first develop a method to approach Theorem~\ref{thm: gap k ge 3} and then obtain a proof of its part~\eqref{first} for $k\ge 3$ odd. 

\subsection{General preparation}\label{subsec: prep}
The discussion in this subsection applies to all $k\ge 3$ (and not just to odd $k$), and will be used in Section~\ref{sec: k even} as well.

Working on $H^*(P^{2n})^{\otimes k}\cong H^*(SP^n(N_1))^{\otimes k}$, let $\mathcal{L}_{n,k}$ denote the set consisting of all those non-negative integers $\ell\leq2n$ for which there is a non-zero cup product as in~\eqref{eq: assume} below with the indicated basis element in its expansion.
\begin{equation}\label{eq: assume}
\prod_{r=2}^k\left(x_{1,1}+x_{r,1}\right)^{a_{r,1}}
=x_{1,1}^{\ell}\otimes x_{2,1}^{2n} \otimes\cdots\otimes x_{k,1}^{2n}+\cdots.    
\end{equation}
Note that other basis elements (if any) in the expansion of~\eqref{eq: assume} have the form
\begin{equation}\label{laforma}
x_{1,1}^{t_1}\otimes\cdots\otimes x_{k,1}^{t_k} \text{ with } t_r<2n \text{ for some }r\in\{2,\ldots, k\}.
\end{equation} 
Basis elements as the one indicated in~\eqref{eq: assume} play a critical role in~\cite{Dav}, where it is shown that $\zcl_k(P^{2n})= a_{2,1}+a_{3,1}+\cdots+a_{k,1}=2n(k-1)+\ell$ and
\begin{equation}\label{probadoporDon}
2n-\ell= \gap_k(P^{2n})=\max_{i\in S(2n)}\{0,2^{i+1}-1-kZ_i(2n)\},    
\end{equation}
provided $\ell=\max\mathcal{L}_{n,k}$; see~\eqref{eq: davis gap main} and Definition~\ref{defn: notation}.

Building on Davis's work, we now consider \emph{extended} products
\begin{equation}\label{productoextendido}
     \prod_{\substack{2\le r\le k \\ 1\le i \le g}}\!\!\left(x_{1,i}+x_{r,i}\right)^{a_{r,i}}     =\left(x_{1,1}^{\ell}\otimes x_{2,1}^{2n}\otimes \cdots\otimes x_{k,1}^{2n}+\cdots\right)\!\prod_{\substack{2\le r\le k \\ 2\le i \le g}}\!\!\left(x_{1,i}+x_{r,i}\right)^{a_{r,i}}
\end{equation}
for each $\ell\in \mathcal{L}_{n,k}$ with corresponding product~\eqref{eq: assume}, which fixes the exponents $a_{r,1}$, and for each set of additional exponents $a_{r,i}\in \{0,1\}$ with $2\le r\le k$ and $2\le i \le g$. This time, we focus on the monomial
\begin{equation}\label{elmonomio}
    \left(x_{1,1}^\ell \prod_{i=2}^g x_{1,i}^{\sum_{r=2}^{k}a_{r,i}}\right)\otimes x_{2,1}^{2n}\otimes\cdots\otimes x_{k,1}^{2n},
\end{equation}
which, because of~\eqref{laforma}, arises in the expansion of the left-hand side of~\eqref{productoextendido} only from the multiplication of the leading summands of factors in the product on the right-hand side of~\eqref{productoextendido}. In particular, assuming~(\ref{elmonomio}) is non-trivial, we get
\begin{equation}\label{eq: imp1}
  \sum_{\substack{2\le r\le k \\ 1\le i \le g}}a_{r,i} \le\zcl_k(SP^n(N_g)) \quad\text{and}\quad 
  \gap_k(SP^n(N_g))  \le 2nk-\sum_{\substack{2\le r\le k \\ 1\le i \le g}}a_{r,i}.
  \end{equation}
Furthermore, as discussed in~\eqref{probadoporDon}, the last two inequalities specialize to
\begin{eqnarray}\label{eq: imp2}
  \begin{split}
\zcl_k(P^{2n})+\sum_{\substack{2\le r\le k \\ 2\le i \le g}}a_{r,i}\le\zcl_k(SP^n(N_g));
\\
\gap_k(SP^n(N_g))\le \gap_k(P^{2n})-\sum_{\substack{2\le r\le k \\ 2\le i \le g}}a_{r,i},
\end{split}
\end{eqnarray}
provided $\ell=\max\mathcal{L}_{n,k}$. The benefit of the last two estimates is that the quantities $\zcl_k(P^{2n})$ and $\gap_k(P^{2n})$ appearing therein are completely understood (see~\eqref{probadoporDon}). Our target quantities $\zcl_k(SP^n(N_g))$ and $\gap_k(SP^n(N_g))$ can then be determined in a number of cases by proving that some suitable choice of the additional exponents $a_{r,i}$ ($i\geq2$) renders equalities in~\eqref{eq: imp2}. The simplest such scenario is given in Proposition~\ref{prop: main tool} below. In the few cases where this strategy falls short, the precise values of $\zcl_k(SP^n(N_g))$ and $\gap_k(SP^n(N_g))$ will be obtained using the more general setting in~\eqref{eq: imp1}.

\begin{proposition}\label{prop: main tool}
    Take $n\geq1$, $k\geq3$, and $\ell=\max\mathcal{L}_{n,k}$. In the setting of~\eqref{eq: imp2}, assume that the element in~\eqref{elmonomio} is non-zero and that
   $a_{r,i}=1$ for all $2\le r\le k$ and $2\le i \le g$. Then both estimates in~\eqref{eq: imp2} are sharp. In particular,
    \[
    \gap_k(SP^n(N_g)) = \max_{i\in S(2n)}\left\{0, 2^{i+1}-1-kZ_i(2n)-(k-1)(g-1) \right\}.
    \]
\end{proposition}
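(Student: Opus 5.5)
The plan is to prove the reverse of the lower bound in~\eqref{eq: imp2}, namely
\[
\zcl_k(SP^n(N_g))\le \zcl_k(P^{2n})+(k-1)(g-1).
\]
Under the hypotheses, the first estimate in~\eqref{eq: imp2} with all $a_{r,i}=1$ for $i\ge 2$ already gives $\zcl_k(SP^n(N_g))\ge \zcl_k(P^{2n})+(k-1)(g-1)$. Since $\sum_{2\le r\le k,\,2\le i\le g}a_{r,i}=(k-1)(g-1)$, the upper bound makes both estimates equalities, because they are equivalent via~\eqref{lasuma}.

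For the upper bound, I will use Corollary~\ref{cor: zcl simplified} together with Proposition~\ref{prop: zcl search reduced}. These let me choose an optimal tuple $(a_{r,i})$ whose product $\prod(x_{1,i}+x_{r,i})^{a_{r,i}}$ is non-zero, with $a_{r,i}\in\{0,1\}$ for $i\ge2$. Hence $\sum_{r\ge2,\,i\ge2}a_{r,i}\le (k-1)(g-1)$. It remains to see that $\sum_{r=2}^k a_{r,1}\le \zcl_k(P^{2n})$. The whole product being non-zero forces its factor $\prod_{r=2}^k(x_{1,1}+x_{r,1})^{a_{r,1}}$ to be non-zero in $H^*(SP^n(N_g))^{\otimes k}$. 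This factor lies in the image of the ring monomorphism $H^*(SP^n(N_1))^{\otimes k}\hookrightarrow H^*(SP^n(N_g))^{\otimes k}$ of Remark~\ref{rem: relocatedcontent}, where it is the image of the same expression. Injectivity then shows that the corresponding product in $H^*(P^{2n})^{\otimes k}$ is non-zero. By Corollary~\ref{cor: zcl simplified} with $g=1$, this gives $\sum_r a_{r,1}\le\zcl_k(P^{2n})$. Adding the two bounds yields the claim.

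The gap formula then follows from~\eqref{lasuma} and~\eqref{probadoporDon}:
\[
\gap_k(SP^n(N_g))=\gap_k(P^{2n})-(k-1)(g-1)=\max_{i\in S(2n)}\{0,2^{i+1}-1-kZ_i(2n)\}-(k-1)(g-1).
\]
Since a gap is non-negative, this quantity is $\ge0$, so it equals $\max\{0,\gap_k(P^{2n})-(k-1)(g-1)\}$. Pulling the constant $(k-1)(g-1)$ inside the maximum over $i\in S(2n)$ then gives exactly the displayed formula.

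The only point needing care is the reduction to $P^{2n}$ in the middle step. We must make sure that non-vanishing of the full product passes to the $x_{\cdot,1}$-part, which is trivial since a factor of a non-zero product is non-zero. We must also check that the embedding is injective, which holds because it sends basis elements to basis elements. I expect no real obstacle beyond this.
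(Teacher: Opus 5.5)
Your proposal is correct and follows the paper's argument: the lower bound comes from the assumed non-vanishing of~\eqref{elmonomio}, and the upper bound comes from Proposition~\ref{prop: zcl search reduced}~\eqref{3} combined with the observation, implicit in the paper, that the $x_{\cdot,1}$-part of an optimal product is a non-zero product of $k$-th zero-divisors of $P^{2n}$. One small remark: that last step uses only that $\iota^{\otimes k}$ is a homomorphism (an element with non-zero image is itself non-zero), so injectivity is not actually needed there.
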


\begin{proof}
    The proof is parallel to that for the case $k=2$ and $g\leq2d+1$ in Section~\ref{sec: k=2}. Namely, under the present hypotheses, Proposition~\ref{prop: zcl search reduced} shows that~\eqref{productoextendido} is the largest possible non-zero product of $k$-th zero divisors. The fact that this product is non-zero comes from the assumed non-triviality of~\eqref{elmonomio}.
\end{proof}

\subsection{Proof of Theorem~\ref{thm: gap k ge 3}~\eqref{first} for $k\ge 3$ odd.}\label{subsec: odd k proof}
    In the expression~\eqref{eq: davis gap main} for $\gap_k(P^{2n})$, $Z_i(2n)$ is odd for each $i\in S(2n)$. Since $k$ is odd, $\gap_k(P^{2n})$ is even, say equal to $2m$. If $m=0$, then we are done since $\gap_k(SP^n(N_g))=0$ in view of Corollary~\ref{cor: zcl with g changing}.    So, let us assume that $m>0$. Take  $\ell:=\max\mathcal{L}_{n,k}$, so that $\ell=2(n-m)$ in~\eqref{probadoporDon}. Say $k=2p+1$ for some $p\ge 1$, and write $m=qp+s$ for some $s\in\{0,\ldots,p-1\}$ and $q\ge 0$. 

We divide the proof into two cases.

\begin{proof}[\textbf{Case (1): $1\le g\le q+1$}]
    In this case, Theorem~\ref{thm: ks description} and Remark~\ref{rem: ks non-zero} imply
    \[
     x_{1,1}^{\ell} \ \prod_{i=2}^gx_{1,i}^{k-1}=x_{1,1}^{2(n-m)+2p(g-1)} \ne 0,
    \]
because $x_{1,1}^{2}=\proj_1^*(y)$ and $n-m+p(g-1) \le n-m+pq=n-s\le n$. This yields the non-triviality of~\eqref{elmonomio} when $a_{r,i}=1$ for all $2\leq r\leq k$ and $2\leq i\leq g$. Proposition~\ref{prop: main tool} then yields the asserted description of $\gap_k(SP^n(N_g))$.
\end{proof}

\begin{proof}[\textbf{Case (2): $g\ge q+2$}]
Note that $2s<2p=k-1$ by definition. First, for $g=q+2$, we have
\[
x_{1,1}^{\ell}\ \prod_{i=2}^{q+1}x_{1,i}^{k-1} \ x_{1,q+2}^{2s} = x_{1,1}^{2(n-m)+2(pq+s)}=x_{1,1}^{2n}\ne 0,
\]
which yields the non-triviality of~\eqref{elmonomio} when $a_{r,i}=1$ for all relevant pairs $(r,i)$, except for $a_{r,g}=0$ with $2s+2\leq r\leq k$. Using~\eqref{eq: imp2}, we then get the optimal estimate
\[
\gap_k(SP^n(N_g))\le\gap_k(P^{2n})-\sum_{\substack{2\le r\le k\\ 2\le i \le g}}a_{r,i}=2m-(k-1)(g-2)-2s=0.
\]
For $g\ge q+3$, Corollary~\ref{cor: zcl with g changing} then gives $\gap_k(SP^n(N_g))\le\gap_k(SP^n(N_{q+2}))=0$. We now note that for each $g\ge q+2$, this is indeed the desired conclusion. This is because the inequalities $\gap_k(P^{2n})=2m<2pq+2p\le(k-1)(g-1)$ imply
\[
2^{i+1}-1-kZ_i(2n)-(k-1)(g-1)\le 2m-(k-1)(g-1) < 0
\]
for all $i\in S(2n)$, so $\max_{i\in S(2n)}\left\{0, 2^{i+1}-1-kZ_i(2n)-(k-1)(g-1)\right\}=0$.
\end{proof}

\begin{remark}
    In the above proof, the second hypothesis of Proposition~\ref{prop: main tool} is not satisfied in the case $g\ge q+2$, yet the estimates in~\eqref{eq: imp2} are sharp. Thus, the conditions in Proposition~\ref{prop: main tool} for the sharpness of the estimates in~\eqref{eq: imp2} are sufficient but not necessary.
\end{remark}

\section{The case of even $k\ge 4$}\label{sec: k even}

In this section, we complete the proof of the remaining parts of Theorem~\ref{thm: gap k ge 3}. We thus assume $k=2\lambda$ for some $\lambda\ge 2$. We can safely assume $\gap_k(P^{2n})>0$ too, because otherwise we are done in view of Corollary~\ref{cor: zcl with g changing}. Under these conditions,~\eqref{eq: davis gap main} implies that $\gap_k(P^{2n})$ is odd, say $\gap_k(P^{2n})=2h-1$ with $h\geq1$. We will analyze three different cases depending on how $h$ compares to the products $(g-1)(\lambda-1)$ and $(g-1)\lambda$. Indeed, in terms of the current notations, the conditions in Theorem~\ref{thm: gap k ge 3} translate as
\begin{align*}
g\le\left\lfloor\frac{\gap_k(P^{2n})}{k}\right\rfloor+1&\iff(g-1)\lambda<h; \\  g\ge\left\lfloor\frac{\gap_k(P^{2n})}{k-2}\right\rfloor+2&\iff(g-1)(\lambda-1)\ge h.
\end{align*} 
First, we prove Theorem~\ref{thm: gap k ge 3}~\eqref{first} for $k\ge 4$ even. 

\begin{proof}[\textbf{Case (1): $(g-1)\lambda< h$}]  
Taking $\ell:=\max\mathcal{L}_{n,k}$ and $a_{r,i}:=1$ for $2\leq r\leq k$ and $2\leq i\leq g$, the monomial
    \[
   x_{1,1}^{\ell} \ \prod_{i=2}^gx_{1,i}^{\sum_{r=2}^{k}a_{r,i}}= x_{1,1}^{2n-2h+1}\prod_{i=2}^g x_{1,i}^{k-1}=x_{1,1}^{2n-2h+ 2(g-1)(\lambda-1)}\prod_{i=1}^g x_{1,i}
    \]
is non-zero because $n-h+(g-1)(\lambda-1)+g\le n$. This yields the non-triviality of~\eqref{elmonomio} for the indicated values of $a_{r,i}$, and then Proposition~\ref{prop: main tool} gives the desired expression for $\gap_k(SP^n(N_g))$.  
\end{proof}
In preparation for the discussion of the other two cases, we fix
\begin{equation}\label{eq: for even k1}
\prod_{r=2}^k\left(x_{1,1}+x_{r,1}\right)^{a_{r,1}} = x_{1,1}^{\ell}\otimes x_{2,1}^{2n} \otimes\cdots\otimes x_{k,1}^{2n}+\cdots,
    \end{equation}
    which is a given non-zero product~\eqref{eq: assume} associated with $\ell:=\max\mathcal{L}_{n,k}$. Note that $\ell=2n-\gap_k(P^{2n})=2n-2h+1$ is odd, so that one of the exponents $a_{r_0,1}$ in~\eqref{eq: for even k1} must be odd. Setting $a'_{r_0,1}:=a_{r_0,1}-1$ and $a'_{r,1}:=a_{r,1}$ for $r\neq r_0$, Lucas's Theorem gives
    \begin{equation}\label{eq: for even k2}
\prod_{r=2}^k\left(x_{1,1}+x_{r,1}\right)^{a'_{r,1}} = x_{1,1}^{\ell-1}\otimes x_{2,1}^{2n} \otimes\cdots\otimes x_{k,1}^{2n}+\cdots \ne 0,
    \end{equation}
so that $\ell-1\in\mathcal{L}_{n,k}$ as well.

We are now ready to prove Theorem~\ref{thm: gap k ge 3}~\eqref{third}.

\begin{proof}[\textbf{Case (2): $h\le (g-1)(\lambda-1)$}]
   Note that $g\geq2$. Write $h=\alpha(\lambda-1)+\beta$, where $\alpha\in \{0,\ldots,g-2\}$ and $\beta\in \{0,\ldots,\lambda-1\}$. Working in the context of Subsection~\ref{subsec: prep} for $\ell-1=\max\mathcal{L}_{n,k}-1\in\mathcal{L}_{n,k}$, take the product in~\eqref{eq: for even k2} as the corresponding non-zero expression~\eqref{eq: assume}. Also, for $(r,i)\in\{2,\ldots,k\}\times\{2,\ldots,g\}$, take
   \[
a'_{r,i}:=\begin{cases}
1 & \text{ if $i\leq \alpha+1$ and $r\leq k-1$, or $i=\alpha+2$ and $r\leq 2\beta+1$;} \\
0 & \text{ otherwise.}
\end{cases}
   \]
Then
    \[
    x_{1,1}^{\ell-1}\ \prod_{i=2}^gx_{1,i}^{\sum_{r=2}^{k}a'_{r,i}}=x_{1,1}^{2n-2h}\left(\prod_{i=2}^{\alpha+1}x_{1,i}^{k-2} \right) x_{1,\alpha+2}^{2\beta}=x_{1,1}^{2n-2h+(k-2)\alpha+2\beta}=x_{1,1}^{2n}\ne 0,
    \]
    which yields the non-triviality of the analogue of~\eqref{elmonomio} in the context of~\eqref{eq: for even k2}. Therefore, the second inequality in~\eqref{eq: imp1} gives the best possible estimate 
    \begin{align*}\gap_k(SP^n(N_g))&\le 2nk-\sum_{\substack{2\le r\le k \\ 1\le i \le g}} a'_{r,i}=2nk- \sum_{r=2}^k a'_{r,1}-\sum_{\substack{2\le r\le k \\ 2\le i \le g}} a'_{r,i}\\ &=2nk-(2n(k-1)+\ell-1)-(\alpha(k-2)+2\beta)\\&=2n -(2n-2h)-(\alpha(k-2)+2\beta)=0.
    \end{align*}
\end{proof}

Lastly, we prove Theorem~\ref{thm: gap k ge 3}~\eqref{second} by establishing the equivalent Theorem~\ref{thm: zcl k ge 3}~\eqref{2nd}. 

\begin{proof}[\textbf{Case (3): $(g-1)(\lambda-1)<h\le (g-1)\lambda$}]
Our task is to establish the equality $\zcl_k(SP^n(N_g))=2nk-t$, where $t:=h-(g-1)(\lambda-1)$. Clearly $1\leq t\leq g-1$, so $g\geq2$. As in the proof of Case (1), we work in the context of Subsection~\ref{subsec: prep} for $\ell=\max\mathcal{L}_{n,k}$, taking the product in~\eqref{eq: for even k1} as the corresponding non-zero expression~\eqref{eq: assume}. Furthermore, for $(r,i)\in\{2,\ldots,k\}\times\{2,\ldots,g\}$, take
   \[
a_{r,i}:=\begin{cases}
1 & \text{ if $i\leq t$ or $r\le k-1$;} \\
0 & \text{ otherwise.}
\end{cases}
   \]
Then
\begin{align*}
x_{1,1}^{\ell}\ \prod_{i=2}^gx_{1,i}^{\sum_{r=2}^{k}a_{r,i}}&=x_{1,1}^{2n-2h+1}\left(\prod_{i=2}^tx_{1,i}^{k-1}\right)\left( \prod_{i=t+1}^gx_{1,i}^{k-2}\right)\\
&= x_{1,1}^{2n-2h}\left(\prod_{i=2}^gx_{1,i}^{k-2}\right)\left(\prod_{i=1}^tx_{1,i}\right) \\
&=x_{1,1}^{2n-2h+(k-2)(g-1)} \prod_{i=1}^tx_{1,i}\ne 0
\end{align*}
    because $n-h+(\lambda-1)(g-1)+t=n$. This yields the non-triviality of~\eqref{elmonomio}, so the first estimate in~\eqref{eq: imp2} gives
    \begin{align*}
    \zcl_k(SP^n(N_g))&\ge \zcl_k( P^{2n} )+ \sum_{\substack{2\le r\le k \\ 2\le i \le g}}a_{r,i} \\ &= 2nk-2h+1+(k-2)(g-1)+t-1 = 2nk-t.
    \end{align*}
The much subtler fact that this estimate is sharp  follows from Proposition~\ref{prop: zcl search reduced} and Lemma~\ref{prop: final} below, which implies that, in the current setting, any product $m\cdot e \in H^*(SP^n(N_g))^{\otimes k}$ with factors
\begin{align} 
m&=\prod_{r=2}^k\left(x_{1,1}+x_{r,1}\right)^{\alpha_{r,1}} \in H^*(P^{2n})^{\otimes k}, \ \text{ with $\alpha_{r,1}\geq0$},\ \text{ and} \nonumber \\
e&=\prod_{\substack{2\le r\le k\\ 2\le i \le g}}\left(x_{1,i}+x_{r,i}\right)^{\delta_{r,i}} \in H^*(SP^n(N_g))^{\otimes k}, \ \text{ with $\delta_{r,i}\in\{0,1\}$}, \label{eq: e0}   \end{align}
vanishes whenever $\deg(m\cdot e)>2nk-t$.
\end{proof}

The proof of the final ingredient, Lemma~\ref{prop: final} below, makes key use of the capacity function introduced in Definition~\ref{def: capacity}.

\begin{lemma}\label{prop: final}
    Let $n,g\geq1$, $d\geq0$, $k=2\lambda\ge 4$, and $\gap_k(P^{2n}):=2h-1\geq1$ such that $(g-1)(\lambda-1)<h\le(g-1)\lambda$. Let $t:=h-(g-1)(\lambda-1)$ and assume that
\begin{enumerate}
        \item $m\in H^*(P^{2n})^{\otimes k}$ is homogeneous with $\deg(m)=\zcl_k(P^{2n})-d$, and\hspace{.5mm}\footnote{We note that the equality $\deg(m)=\zcl_k(P^{2n})-d$ makes sense since the hypotheses of Lemma~\ref{prop: final} imply $d<\gap_k(P^{2n})$, whose verification is easy and hence omitted.}
        \item  $e\in H^*(SP^n(N_g))^{\otimes k}$ is as in~\eqref{eq: e0} with $\deg(e)>\gap_k(P^{2n})+d-t$.
    \end{enumerate}
    Then the product $m\cdot e$ vanishes in $H^*(SP^n(N_g))^{\otimes k}$.
\end{lemma}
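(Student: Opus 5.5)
The plan is to reduce everything to a capacity count, combining Proposition~\ref{prop: prop y from notes} (for $m$) with an iterated use of Proposition~\ref{prop: prop z from notes} (for $e$). Since the vanishing of $m\cdot e$ is linear in $m$, it suffices to show that $b\cdot e=0$ for every standard basis element $b$ of $H^*(P^{2n})^{\otimes k}$ of degree $\deg(m)$. Such a $b$ has the form $x_{1,1}^{2n-p_1}\otimes\cdots\otimes x_{k,1}^{2n-p_k}$ with $p_r\in\{0,\ldots,2n\}$. By~\eqref{probadoporDon} we have $\zcl_k(P^{2n})=2nk-\gap_k(P^{2n})=2nk-2h+1$, so
\[
\sum_{r=1}^k p_r=2nk-\deg(m)=2h-1+d .
\]
Proposition~\ref{prop: prop y from notes} then gives $c(b)\le\left\lfloor\frac{2h-1+d}{2}\right\rfloor$.

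Next, I would write $e=\prod_{i=2}^g e_i$ with $e_i:=\prod_{r=2}^k(x_{1,i}+x_{r,i})^{\delta_{r,i}}$ and $s_i:=\sum_{r=2}^k\delta_{r,i}\in\{0,\ldots,k-1\}$, and multiply $b$ by $e_2,e_3,\ldots,e_g$ in turn. At step $i$, the current product is a sum of basis elements involving only $x_1,\ldots,x_{i-1}$, so it lies in the image of $H^*(SP^n(N_{i-1}))^{\otimes k}\hookrightarrow H^*(SP^n(N_i))^{\otimes k}$ from Remark~\ref{rem: relocatedcontent}. Proposition~\ref{prop: prop z from notes}, applied with $g$ replaced by $i-1$ and then pushed into $H^*(SP^n(N_g))^{\otimes k}$ via the monomorphism of Remark~\ref{rem: relocatedcontent}, says that multiplying any such basis element by $e_i$ lowers the capacity of every resulting basis summand by at least $\left\lfloor\frac{1+s_i}{2}\right\rfloor$. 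Summands whose capacity would become negative vanish. By induction on $i$, every basis summand of $b\cdot e$ therefore has capacity at most
\[
\left\lfloor\frac{2h-1+d}{2}\right\rfloor-\sum_{i=2}^g\left\lfloor\frac{1+s_i}{2}\right\rfloor,
\]
so $b\cdot e=0$ as soon as this quantity is negative.

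It remains to do the arithmetic. For $0\le s\le 2\lambda-1$ one has $\left\lfloor\frac{1+s}{2}\right\rfloor\ge s-(\lambda-1)$. This is an equality at $s=2\lambda-1$ and at $s=2\lambda-2$, and for $s\le 2\lambda-2$ it follows from $\left\lfloor\frac{1+s}{2}\right\rfloor\ge s/2\ge s-(\lambda-1)$. Hence
\[
\sum_{i=2}^g\left\lfloor\frac{1+s_i}{2}\right\rfloor\ge \deg(e)-(g-1)(\lambda-1).
\]
By hypothesis~(2) and the definition of $t$,
\[
\deg(e)\ge 2h+d-t=h+d+(g-1)(\lambda-1),
\]
so the sum is at least $h+d$. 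Since $h+d>\frac{2h-1+d}{2}\ge\left\lfloor\frac{2h-1+d}{2}\right\rfloor$, the capacity bound is negative and every term vanishes.

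The step I expect to need the most care is the legitimacy of the induction. Proposition~\ref{prop: prop z from notes} is stated for a basis element coming from genus $g$ multiplied by factors in the new variable $x_{\cdot,g+1}$. At each stage the relations $x_{r,i}^2=x_{r,1}^2$ fold squares back into the $x_{\cdot,1}$-exponents, so I must check two things. First, the summands produced genuinely remain basis elements (or zero) in the smaller-genus ring, so that the proposition can be reapplied. Second, capacities computed in $H^*(SP^n(N_{i}))^{\otimes k}$ agree with those in $H^*(SP^n(N_g))^{\otimes k}$. The latter holds because the capacity depends only on the exponents and on $n$, not on $g$.
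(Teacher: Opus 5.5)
Your proof is correct. Its skeleton is the paper's: reduce $m$ to a basis element $x_{1,1}^{2n-p_1}\otimes\cdots\otimes x_{k,1}^{2n-p_k}$ with $\sum p_r=2h-1+d$, bound its capacity by $h+\lfloor\frac{d-1}{2}\rfloor$ via Proposition~\ref{prop: prop y from notes}, and absorb $e$ one index $i$ at a time with Proposition~\ref{prop: prop z from notes}. The induction checks you flag go through as you say. By Remark~\ref{motivaciondelacapacidad}, each monomial obtained after folding squares is either a basis element or zero. The capacity formula does not depend on the genus. The maps of Remark~\ref{rem: relocatedcontent} send basis elements to basis elements.

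You genuinely diverge only in the final inequality $\sum_{i=2}^g\lfloor\frac{\delta_i+1}{2}\rfloor>h+\lfloor\frac{d-1}{2}\rfloor$. The paper normalizes to $\deg(e)=d+h+(g-1)(\lambda-1)$ and derives $d+h\le(g-1)\lambda$, writing $d+h=(g-1)\lambda-j$. It then uses an exchange argument to show that the minimum in \eqref{alternativa} is attained by the tuple with $j$ entries equal to $k-2$ and the rest equal to $k-1$, with value $\lambda(g-1)-j$. Your pointwise bound $\lfloor\frac{1+s}{2}\rfloor\ge s-(\lambda-1)$ for $0\le s\le 2\lambda-1$ is tight exactly at $s\in\{2\lambda-2,2\lambda-1\}$. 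It yields $\deg(e)-(g-1)(\lambda-1)\ge h+d$ in one line.

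This coincides with the paper's exact minimum, since $\lambda(g-1)-j=h+d$, so nothing is lost. You also avoid both the normalization of $\deg(e)$ and the auxiliary inequality $d+h\le(g-1)\lambda$. The paper's route identifies the extremal tuple, which is more than the lemma needs; yours is the more economical verification.
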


We have observed that the hypotheses on $h$ imply $g\ge 2$. So, Lemma~\ref{prop: final} is unnecessary in Davis's study~\cite{Dav} of the $k$-th zero-divisors of $P^{2n}$. This is another indication of the fact that the exact computation of $\zcl_k(SP^n(N_g))$ for $g\ge 2$ is strictly more involved than that for $g=1$.

\begin{proof}[Proof of Lemma~\ref{prop: final}]
We can safely assume $m=x_{1,1}^{2n-p_1}\otimes\cdots\otimes x_{k,1}^{2n-p_k}$, with $p_r\ge 0$ for $1\le r\le k$, as well as $\deg(e)=\gap_k(P^{2n})+d-t+1$, which, by hypothesis, agrees with $d+h+(g-1)(\lambda-1)$. Then $p_1+\cdots+p_k=2h-1+d$, so that Proposition~\ref{prop: prop y from notes} gives
\[
c(m)\le\left\lfloor\frac{2h-1+d}{2}\right\rfloor=h+\left\lfloor\frac{d-1}{2}\right\rfloor.
\]
The vanishing of $m\cdot e$ then follows from an iterated application of Proposition~\ref{prop: prop z from notes} and the fact that
\[
\sum_{i=2}^g\left\lfloor \frac{\delta_i+1}{2} \right\rfloor>h+\left\lfloor\frac{d-1}{2}\right\rfloor,
\]
where $\delta_i:=\sum_{r=2}^k\delta_{r,i}$.  In turn, the previous inequality is a special case of the purely algebraic relation
\begin{equation}\label{alternativa}
\min\left\{ \,\sum_{i=2}^g\left\lfloor \frac{\varepsilon_i+1}{2} \right\rfloor\ \middle|\ 
\begin{matrix}
0\le\varepsilon_i\le k-1, &  \\ \displaystyle\sum_{i=2}^g\varepsilon_i=d+h+(g-1)(\lambda-1) &
\end{matrix}\hspace{-3mm}\right\}>h+\left\lfloor\frac{d-1}2\right\rfloor,
\end{equation}
whose validity is argued next.

First, note that $\delta_i\leq k-1=2\lambda-1$ for $2\leq i\leq g$, so 
\[
(g-1)(2\lambda-1)\geq\sum_{i=2}^g\delta_i=\deg(e)=d+h+(g-1)(\lambda-1),
\]
which yields the third inequality in
$(g-1)(\lambda-1)<h\leq d+h\le (g-1)\lambda$
(the other two hold by hypothesis). We can then write $d+h=(g-1)\lambda-j$ for some $j\in\{0,\ldots,g-2\}$, so that every summation $\sum\varepsilon_i$ in~\eqref{alternativa} equals $(g-1)(k-1)-j$. It follows that any tuple $\varepsilon:=(\varepsilon_2,\ldots,\varepsilon_g)$ having $j$ coordinates equal to $k-2$ and the rest equal to $k-1$, which is unique up to permutation, gives an instance of the set in~\eqref{alternativa} that also satisfies
\begin{align*}
\sum_{i=2}^g\left\lfloor \frac{\varepsilon_i+1}{2} \right\rfloor &= \sum_{i=2}^{j+1}\left\lfloor \frac{k-1}{2}\right\rfloor+\sum_{i=j+2}^g\left\lfloor \frac{k}{2}  \right\rfloor\\&=j(\lambda-1)
+(g-j-1)\lambda=\lambda(g-1)-j>h+\left\lfloor\frac{d-1}2\right\rfloor,
\end{align*}
where the last inequality holds because $h=(g-1)\lambda-j-d$ and $d>\left\lfloor\frac{d-1}2\right\rfloor$. The inequality in~\eqref{alternativa} will then follow once we argue that
\begin{equation}\label{serealiza}
\sum_{i=2}^g\left\lfloor\frac{\varepsilon'_i+1}2\right\rfloor\geq \sum_{i=2}^g\left\lfloor\frac{\varepsilon_i+1}2\right\rfloor
\end{equation}
holds for any other instance $\varepsilon':=(\varepsilon'_2,\ldots,\varepsilon'_g)$ of the set in~\eqref{alternativa}. For this, note that, just as the tuple $\varepsilon$ has \emph{exactly} $g-j-1$ coordinates (say the first ones) with the maximal possible value $k-1$, the tuple $\varepsilon'$ must have \emph{at least} $g-j-1$ coordinates (say again the first ones) with the maximal possible value $k-1$. We can ignore these first $g-j-1$ coordinates, as they represent no difference for both sides in~\eqref{serealiza}. From the remaining coordinates, we can likewise ignore those in $\varepsilon'$ together with the corresponding ones in $\varepsilon$ with value $k-2$. Let $R\subset\{2,\ldots,g\}$ denote the set of the remaining coordinates. All of these then have value $k-2$ in the case of $\varepsilon$, while value different from $k-2$ in the case of $\varepsilon'$. Let $R'\subseteq R$ (resp.~$R''\subseteq R$) consist of the coordinates where $\varepsilon'$ has value strictly smaller than $k-2$ (resp.~value equal to $k-1$). Since $\varepsilon_i+1=k-1$  is odd for $i\in R$, we see that
\begin{align}
i\in R'& \implies 0\leq\left\lfloor\frac{\varepsilon_i+1}{2}\right\rfloor-\left\lfloor\frac{\varepsilon'_i+1}{2}\right\rfloor=\left\lfloor\frac{\varepsilon_i-\varepsilon'_i}{2}\right\rfloor; \label{eq: for R'}
\\
i\in R''& \implies  1=\left\lfloor\frac{\varepsilon'_i+1}{2}\right\rfloor-\left\lfloor\frac{\varepsilon_i+1}{2}\right\rfloor, \text{ as well as } \varepsilon'_i-\varepsilon_i=1.\label{eq: for R''}
\end{align}
By definition of $\varepsilon'$ we have $\sum_{i=2}^g\varepsilon_i=\sum_{i=2}^g\varepsilon_i'$, which by construction yields
\begin{equation*}
    \sum_{i\in R'}\varepsilon_i+ \sum_{i\in R''}\varepsilon_i=\sum_{i\in R'}\varepsilon_i'+\sum_{i\in R''}\varepsilon_i',
\end{equation*}
so that
\[
\sum_{i\in R'}\left(\varepsilon_i-\varepsilon_i'\right)= \sum_{i\in R''}\left(\varepsilon_i'-\varepsilon_i\right)=|R''|,
\]
where the last equality uses~\eqref{eq: for R''}. From this equality,~\eqref{eq: for R'}, and~\eqref{eq: for R''}, we get that
\begin{align*}
\sum_{i=2}^g \left(\left\lfloor\frac{\varepsilon_i'+1}{2}\right\rfloor-\left\lfloor\frac{\varepsilon_i+1}{2}\right\rfloor\right) & = \sum_{i\in R'} \left(\left\lfloor\frac{\varepsilon_i'+1}{2}\right\rfloor-\left\lfloor\frac{\varepsilon_i+1}{2}\right\rfloor\right) + \sum_{i\in R''} \left(\left\lfloor\frac{\varepsilon_i'+1}{2}\right\rfloor-\left\lfloor\frac{\varepsilon_i+1}{2}\right\rfloor\right) 
\\ 
& =\sum_{i\in R'}\left(-\left\lfloor\frac{\varepsilon_i-\varepsilon_i'}{2}\right\rfloor\right)+|R''| 
\\ 
&= \sum_{i\in R'}\left(-\left\lfloor\frac{\varepsilon_i-\varepsilon_i'}{2}\right\rfloor\right)+\sum_{i\in R'}\left(\varepsilon_i-\varepsilon_i'\right) 
\\ 
&= \sum_{i\in R'}\left(\left(\varepsilon_i-\varepsilon_i'\right)-\left\lfloor\frac{\varepsilon_i-\varepsilon_i'}{2}\right\rfloor\right)\ge 0.
\end{align*}
This verifies~\eqref{serealiza}, thereby completing the proof. 
\end{proof}


\begin{thebibliography}{references}
\bibitem[BGRT14]{BGRT} I.~Basabe, J.~Gonz\'alez, Y.~B.~Rudyak, D.~Tamaki, Higher topological complexity and its symmetrization. \textit{Algebr. Geom. Topol.} \textbf{14} (2014), no.~4, 2103--2124.

\bibitem[BGZ02]{BGZ} P.~Blagojevi\'c, V. Gruji\'c, R. Zivaljevi\'c, Symmetric products of surfaces; a unifying theme for topology and physics. In \emph{Proceedings of Summer School in Modern Mathematical Physics}, ed. B. Dragovic \emph{et al.}, Sveske Fiz. Nauka, Ser. A, Conf. 3, Institute of Physics, Belgrade, 2002, pp.~466--491.

\bibitem[CAC+26]{CAC} N.~Cadavid-Aguilar, D.~Cohen, J.~Gonz\'alez, S.~Hughes, L. Vandembroucq, On the higher topological complexity of manifolds with abelian fundamental group. \emph{J. Appl. Comput. Topol.} \textbf{10} (2026), no.~1, 4, pp.~26.

\bibitem[CAG+18]{CAG+} N.~Cadavid-Aguilar, J.~Gonz\'alez, D.~Guti\'errez, A.~Guzm\'an-S\'aenz, A.~Lara, Sequential motion planning algorithms in real projective spaces: an approach to their immersion dimension. \emph{Forum Math.} \textbf{30} (2018), no.~2, 397--417.

\bibitem[CV17]{CV1} D.~C.~Cohen, L.~Vandembroucq, Topological complexity of the Klein bottle. \emph{J. Appl. Comput. Topol.} \textbf{1} (2017), no.~2, 199--213.

\bibitem[CV21]{CV2} D.~C.~Cohen, L.~Vandembroucq, On the topological complexity of manifolds with abelian fundamental group. \emph{Forum Math.} \textbf{33} (2021), no.~6, 1403--1421.

\bibitem[CLOT03]{CLOT} O.~Cornea, G.~Lupton, J.~Oprea, D.~Tanr\'e, \emph{Lusternik--Schnirelmann Category}. Math. Surveys Monogr., 103, AMS, Providence, 2003, xviii+330~pp.

\bibitem[Dav18]{Dav} D.~M.~Davis, A lower bound for higher topological complexity of real projective space. \emph{J. Pure Appl. Algebra} \textbf{222} (2018), no.~10, 2881--2887.

\bibitem[DCDJ]{DCDJ} L.~F.~Di~Cerbo, A.~Dranishnikov, E.~Jauhari, Curvature, macroscopic dimensions, and symmetric products of surfaces. Preprint, arXiv:2503.01779 [math.GT] (2025), pp. 41.

\bibitem[Dra14]{Dr1} A.~Dranishnikov, Topological complexity of wedges and covering maps. \emph{Proc. Amer. Math. Soc.} \textbf{142} (2014), no.~12, 4365--4376.

\bibitem[Dra16]{Dr2} A.~Dranishnikov, The topological complexity and the homotopy cofiber of the diagonal map for non-orientable surfaces. \emph{Proc. Amer. Math. Soc.} \textbf{144} (2016), no.~11, 4999--5014.

\bibitem[Dra17]{Dr3} A.~Dranishnikov, On topological complexity of non-orientable surfaces. \emph{Topology Appl.} \textbf{232} (2017), 61--69.

\bibitem[Far03]{Far} M.~Farber, Topological complexity of motion planning. \emph{Discrete Comput. Geom.} \textbf{29} (2003), no.~2, 211--221.

\bibitem[Far06]{Far2} M.~Farber, Topology of robot motion planning. In \textit{Morse theoretic methods in nonlinear analysis and in symplectic topology}, ed. P.~Biran \textit{et al.}, NATO Sci. Ser. II Math. Phys. Chem., 217, Springer, Dordrecht, 2006, pp. 185--230.


\bibitem[FKS20]{FKS} M.~Farber, D.~Kishimoto, D.~Stanley, Generating functions and topological complexity. \textit{Topology Appl.} \textbf{278} (2020), 107235, pp. 5.

\bibitem[FO19]{FO} M.~Farber, J.~Oprea, Higher topological complexity of aspherical spaces. \textit{Topology Appl.} \textbf{258} (2019), 142--160.

\bibitem[FTY03]{FTY} M.~Farber, S.~Tabachnikov, S.~Yuzvinsky, Topological robotics: motion planning in projective spaces. \emph{Intl. Math. Res. Not.} (2003), no.~34, 1853--1870.

\bibitem[GCV13]{GV} J.~M.~Garc\'ia-Calcines, L.~Vandembroucq, Topological complexity and the homotopy cofibre of the diagonal map. \emph{Math. Z.} \textbf{274} (2013), no.~1-2, 145--165.

\bibitem[GGGL16]{GGGL} J. Gonz\'alez, B. Guti\'errez, D. Guti\'errez, A. Lara, Motion planning in real flag manifolds. \emph{Homology Homotopy Appl.} \textbf{18} (2016), no.~2, 359--375.

\bibitem[GGV18]{GGL} J. Gonz\'alez, M.~Grant, L.~Vandembroucq, Hopf invariants, topological complexity, and LS-category of the cofiber of the diagonal map for two-cell complexes. In \emph{Topological complexity and related topics}, Contemp. Math., 702, AMS, Providence, 2018, pp.~133--150.

\bibitem[HL22]{HL} S.~Hughes, K.~Li, Higher topological complexity of hyperbolic groups. \emph{J. Appl. Comput. Topol.} \textbf{6} (2022), no.~3, 323--329.


\bibitem[Hat02]{Ha} A.~Hatcher, {\em Algebraic Topology}. Cambridge University Press, Cambridge, 2002, xii+544~pp.

\bibitem[IS10]{IS} N.~Iwase, M.~Sakai, Topological complexity is a fibrewise L-S category. \emph{Topology Appl.} \textbf{157} (2010), no.~1, 10--21

\bibitem[Jau25]{Ja} E.~Jauhari, LS-category and sequential topological complexity of symmetric products. \emph{J. Appl. Comput. Topol.} \textbf{9} (2025), no.~4, 25, pp. 31.

\bibitem[KS06]{KS} S.~Kallel, P.~Salvatore, Symmetric products of two dimensional complexes. In {\em Recent developments in algebraic topology}, ed. A.~\'Adem {\em et al.}, Contemp. Math., 407, AMS, Providence, 2006, pp. 147--161.

\bibitem[KT13]{KT} S.~Kallel, W.~Taamallah, The geometry and fundamental group of permutation products and fat diagonals. \emph{Canad. J. Math.} \textbf{65} (2013), no.~3, 575--599.

\bibitem[Mac62]{Mac} I.~G.~Macdonald, Symmetric products of an algebraic curve. \emph{Topology} \textbf{1} (1962), 319--343.

\bibitem[Rud10]{Ru} Y.~B.~Rudyak, On higher analogs of topological complexity. \textit{Topology Appl.} \textbf{157} (2010), no. 5, 916--920. 
\end{thebibliography}
\end{document}